\def\a {{\boldsymbol a}}
\def\n {{\boldsymbol n}}
\newtheorem{theorem}{Theorem}[section]
\newtheorem{lemma}{Lemma}[section]
\newtheorem{proposition}{Proposition}[section]
\newtheorem{corollary}{Corollary}[section]
\newtheorem{remark}{Remark}[section]
\title[Approximation of a Hele-Shaw-like system]{ From a cell model with active motion to a Hele-Shaw-like system. A numerical approach}
\author[F. Guillén-González]{Francisco Guill\'en-Gonz\'alez$\dag$}
\address[$\dag$]{Dpto.~E.D.A.N. and IMUS, Universidad 
de Sevilla, Aptdo.~1160, 41080 Sevilla, Spain.  E-mail: {\tt guillen@us.es}}
\author[J. V. Gutiérrez-Santacreu]{Juan Vicente Gutiérrez-Santacreu$\ddag$}
\address[$\ddag$]{ Dpto. de Matemática Aplicada I, E. T. S. I. Informática, Universidad de Sevilla. Avda. Reina Mercedes, s/n. E-41012 Sevilla, Spain.  E-mail: {\tt juanvi@us.es}}
\thanks{This work was partially supported by Ministerio de Economía y Competitividad 
  under Spanish grant MTM2015-69875-P with the participation of FEDER}
\begin{document}
\date{\today}
\begin{abstract}
In this paper we deal with the  numerical solution of a Hele--Shaw-like system via a cell model with active motion. Convergence of approximations is established for well-posed initial data. These data are chosen in such a way the time derivate is positive at the initial time.  

The numerical method is constructed by means of a finite element procedure together with the use of a closed-nodal integration. This gives rise to an algorithm which preserves positivity whenever a right-angled triangulation is considered. As a result, uniform-in-time a priori estimates 
 are proven which allows us to pass to limit towards a solution to the Hele--Shaw problem.
\end{abstract}

\maketitle
{\bf 2010 Mathematics Subject Classification.} 92C50, 35B25, 35K55, 35Q92, 35R35, 76D27.

{\bf Keywords.} Finite-element approximation; nonlinear diffusion; free boundary problems; Hele-Shaw flows.

\tableofcontents

\section{Introduction}

\subsection{The models} Tumour cells are active mechanical systems that are able to produce forces which cause random migration \cite{Betteridge_Owen_Byre_Alarcon_Maini_2006, Drasdo_Hoehme_2012,Saut_Lagaert_Colin_2014}. This movement is due to rather complicate mechanisms which occur inside cells and give rise to changes in cell shape. Another important mechanism under which cells move is pressure \cite{Bru_Albertos_Subiza_Asenjo_Broe_2003, Drasdo_Hoehme_2012, Ranft_Basan_Egeti_Joanny_Prost_Julicher_2010}  as a consequence of space competition generated by cell proliferation itself. In the setting up we take into consideration a very simplified model which incorporates the two spatial effects for describing tumour growth.      

Let $\Omega$ be a connected, open, bounded set of $\mathds{R}^d$, with $d=2$ or $3$,  and $[0,T]$ a time interval. Consider the cell model with active motion \cite{Perthame_Quiros_Tang_Vauchelet_2014} which consists in finding a tumour cell population density  $n: \overline\Omega\times [0,T]\to \mathds{R}^+$ satisfying 
\begin{equation}\label{Cell-Model}
\partial_t n - \nabla\cdot ( n \nabla p(n) )-\nu \Delta n  = n \, G(p(n))  \quad \mbox{ in $\Omega\times (0,T)$},
\end{equation}
subject to the (natural) boundary condition
\begin{equation}\label{Boundary-Condition}
\nabla n\cdot \n =0 \quad \mbox{ on $\partial\Omega\times (0,T)$},
\end{equation}
with $\n$ being the outwards unit normal vector on the boundary $\partial\Omega$, and the initial condition 
\begin{equation}\label{Initial-Condition}
n|_{t=0}=n^0\quad \mbox{ in $\Omega$}.
\end{equation}
Here $p:[0,+\infty) \to [0,+\infty)$ is defined by 
\begin{equation} \label{def_p(n)}
p=p(n):=\frac{k}{k-1} n^{k-1}\quad \forall\, n\ge 0,
\quad (k\in \mathds{N},\ k\ge 2),
\end{equation}
and $G=G(p)$ is a truncated decreasing function such that there exists  $P_{\rm max}>0$ (the homeostatic pressure) with 
\begin{equation}\label{prop:G}
G(0)>0,\quad G(p)=0 \quad \forall\, p\ge P_{\rm max}>0,\quad \hbox{and}\quad G'(p)<0\quad \forall\, p\in (0, P_{\rm max}).
\end{equation} 

In the above, $G$ stands for the decrease in the tumuor cell growth rate when space is limited; the lack of space is governed by the local pressure $p$, the parameter $P_{\rm max}$ is the maximum pressure threshold that tumour cells can exceed before entering a quiescent state, and the parameter $\nu>0$ represents the effect of including the active (random) motion of cells. 

It should be noted that the relationship of $p(n)$ given in \eqref{def_p(n)} is invertible for $n\ge 0$:  
\begin{equation} \label{def_n(p)}
n(p):=\left(\frac{k-1}{k} p \right)^{1/(k-1)} \quad \forall\, p\ge 0.
\end{equation}

In this work we assume that $\{n^0_k\}_{k\in\mathds{N}}$ is   a sequence of initial data \eqref{Initial-Condition} for \eqref{Cell-Model} such that 
\begin{equation}\label{Min-Max-Initial-Data-p}
 0\le p(n^0_k) \le P_{\rm max}\quad\mbox{ in  $\Omega$},
\end{equation}
and that there exists a limit function $n_\infty^0$ such that 
\begin{equation}\label{ID-limit}
 n^0_k \to n^0_\infty \quad\mbox{in $L^p(\Omega)$-strongly for any $p<\infty$ as $k\to \infty$.} 
\end{equation}
Consequently, defining $N_{max}(k):=n(P_{max})$  with $n(\cdot)$ being given in \eqref{def_n(p)}, we have 
\begin{equation}\label{Min-Max-Initial-Data-n}
0 \le n^0_k \le N_{\rm max}(k)  \quad\mbox{ in  $\Omega$} .
\end{equation}
from which we infer that there must exist $  N_0>0$ such that $ N_{\rm max}(k) \le N_0$. Under the above assumptions, equation \eqref{Cell-Model} generates a sequence of solutions $\{ n_{k} \}_{k\in \mathds{N}}$  which lead to a solution describing  the dynamics of  tumour growth as a free-boundary problem. To be more precise, the convergence of the solutions $\{n_k\}_{k\in\mathds{N}}$ of the active motion cell model problem \eqref{Cell-Model}-\eqref{Initial-Condition} towards a weak  solution to a Hele--Shaw-like system, as the parameter $k$ goes to infinity, was proven in \cite{Perthame_Quiros_Tang_Vauchelet_2014}. This limit system reads as follows. Find $n_\infty: \overline \Omega\times [0,T]\to \mathds{R}^+$ and $p_\infty: \overline \Omega\times [0,T]\to \mathds{R}^+$ such that  
\begin{equation}\label{Hele-Shaw}
\partial_t n_{\infty}-\Delta p_\infty-\nu \Delta n_\infty=n_\infty G(p_\infty) \quad\mbox{ in $\Omega\times(0,T)$},
\end{equation}
subject to 
\begin{equation}\label{IC-limit}
 n_{\infty}|_{t=0} =n_{\infty}^0  \quad \mbox{ in $\Omega$},
\end{equation}
\begin{equation}\label{BC-limit}
\nabla n_{\infty}\cdot \n =0\quad \mbox{ and }\quad \nabla p_\infty\cdot \n =0  \quad \mbox{ on $\partial\Omega\times (0,T)$},
\end{equation}
jointly to the complementary relation
\begin{equation}\label{Complemetary-Hele-Shaw}
p_\infty (\Delta p_\infty+G(p_\infty))=0 \quad\mbox{ in $\Omega\times(0,T)$}.
\end{equation}
The key point in establishing convergence is imposing that $\partial_t n_k(0)\ge 0$. Moreover, equation \eqref{Hele-Shaw} is equivalent to solving 
\begin{equation}\label{Hele-Shaw_II}
\partial_t n_{\infty}-\nabla\cdot (n_\infty\nabla p_\infty)-\nu \Delta n_\infty=n_\infty G(p_\infty) \quad\mbox{ in $\Omega\times(0,T)$}.
\end{equation}
This equivalence will be accomplished due to the equality $\nabla p_\infty = n_\infty \nabla p_\infty$, which comes from the equalities  $p_\infty\nabla n_\infty=0$ and $p_\infty n_\infty = p_\infty$.

In this paper, we shall be concerned with the convergence of a finite element scheme,  the time variable being continuous, for the active motion cell model problem \eqref{Cell-Model}-\eqref{Initial-Condition} towards the Hele-Shaw system \eqref{Hele-Shaw}-\eqref{Complemetary-Hele-Shaw} as the space discrete parameter $h$ goes to zero and $k$ goes to infinity. 

\subsection{Notation} We will assume the following notation throughout this paper. Let
$\mathcal{O}\subset \mathds{R}^M$, with $M\ge1$, be a Lebesgue-measurable set and let $1\le p\le\infty$. We denote by $L^p(\mathcal{O})$ the space of all Lesbegue-measurable real-valued functions, $f:\mathcal{O}\to \mathds{R} $, being $p$th-summable in $\mathcal{O}$ for $p<\infty$  or essentially bounded for $p=\infty$, and by $\|f\|_{L^p(\mathcal{O})}$ its norm. When $p=2$, the
$L^2(\mathcal{O})$ space is a Hilbert space whose inner product  is denoted by $(\cdot,\cdot)$. To shorten the notation, the norm $\|\cdot\|_{L^2(\Omega)}$  is abbreviated by $\|\cdot\|$.

Let $\alpha = (\alpha_1, \alpha_2, . . . , \alpha_M)\in \mathds{N}^M$ be a
multi-index with $|\alpha|=\alpha_1+\alpha_2+...+\alpha_M$, and let
$\partial^\alpha$ be the differential operator such that
$$\partial^\alpha=
\Big(\frac{\partial}{\partial{x_1}}\Big)^{\alpha_1}...\Big(\frac{\partial}{\partial{x_d}}\Big)^{\alpha_M}.$$

For $m\ge 0$ and $1 \le p\le \infty $, we define $W^{m,p}(\mathcal{O})$ to be the Sobolev space of all functions whose $m$ derivatives are in $L^p(\mathcal{O})$, with the norm
\begin{align*}
\|f\|_{W^{m,p}(\mathcal{O})}&=\left(\sum_{|\alpha|\le m} \|\partial^\alpha f\|^p_{L^p(\mathcal{O})}\right)^{1/p} \quad  &&\hbox{for} \ 1 \leq p < \infty, \\
\|f\|_{W^{m,p}(\mathcal{O})}&=\max_{|\alpha|\le m} \|\partial^\alpha f\|_{L^\infty(\Omega)}, \quad  && \hbox{for} \  p = \infty,
\end{align*}
where $\partial^\alpha$ is understood in the distributional sense. For p = 2, $W^{m,2}(\mathcal{O})$  will be denoted by $H^m(\mathcal{O})$. We also consider $C^\infty(\mathcal{O})$ to be the space of functions continuously differentiable any number of times, and $C^\infty_c(\mathcal{O})$ to be the subspace of $C^\infty(\mathcal{O})$ with compact support in $\mathcal{O}$. 

Spaces of Bochner-measurable functions from a time interval $[0,T]$ to a Banach space $X$ will be denoted as $L^p(0,T; X)$ with $\|f\|_{L^2(0,T; X)}=\int_0^T\|f(s)\|^p_{X} {\rm d} s$ if $1\le p<\infty$ or  $\|f\|_{L^\infty(0,T, X)}={\rm ess}\sup_{s\in(0,T)}\|f(s)\|_X<\infty$ if $p=\infty$.

\subsection{Outline} Next we sketch the remaining content of this work. In section 2 we present our finite-element spaces and some preliminary result mainly concerning interpolation operators. Furthermore, we set out our finite element numerical method, where the time variable remains continuous, and the main result of this paper.  Next is section 3 which is devoted to demonstrating the main result. Firstly, a discrete maximum principle for finite-element approximations is achieved by assuming a partition of the computational domain being made up of right-angled simplexes, and a priori estimates are also established independent of $(h,k)$ with $h$ being the space parameter associated to our finite-element space. As a result, we are able to prove positivity for the time derivative of finite-element approximations. Then better a priori energy estimates lead to obtaining compactness for passing to the limit as $(h,k)\to (0,+\infty)$. In section 4, we propose a variant of our numerical algorithm for nonobtuse triangulations which keeps with a discrete maximum principle and positive for the discrete time but whose convergence is not clear. Finally, in section 4, some numerical experiments are presented for studying the behavior of several parameters.       

\section{Spatial discretization}
\subsection{Finite-element approximation}
Herein we introduce the hypotheses that will be required along this work.
\begin{enumerate}
\item [(H1)] Let $\Omega$ be a bounded domain of $\mathds{R}^d$ ($d=2$ or $3$) with a polygonal or polyhedral Lipschitz-continuous boundary. 
\
\item[(H2)] Let $\{{\mathcal T}_{h}\}_{h>0}$ be a family of shape-regular, quasi-uniform triangulations of  $\overline{\Omega}$ made up of right-angled simplexes being triangles in two dimensions 
and tetrahedra in three dimensions, so that  $\overline \Omega=\cup_{K\in {\mathcal T}_h}K$, where $h=\max_{K\in \mathcal{T}_h} h_K$, with $h_K$ being the diameter of $K$. Further,  let 
${\mathcal N}_h = \{\a_i\}_{i\in I}$ denote the set of all the nodes of ${\mathcal T}_h$.

\item [(H3)] Conforming piecewise linear, finite element spaces associated to ${\mathcal T}_h$ are assumed for approximating $H^1(\Omega)$. Let  $\mathcal{P}_1(K)$ be the set of linear polynomials on  $K$; the space of continuous, piecewise $\mathcal{P}_1(K)$ polynomial 
functions on ${\mathcal T}_h$  is then denoted as
$$
N_h = \left\{ n_h \in {C}^0(\overline\Omega) \;:\; 
n_h|_K \in \mathcal{P}_1(K) \  \forall K \in \mathcal{T}_h \right\},
$$
whose Lagrange basis is denoted by $\{\varphi_\a\}_{\a\in{\mathcal{N}_h}}$. 
\end{enumerate}
\
We now give some auxiliary results for later use. We begin by an inverse inequality whose proof can be found in \cite[Lem. 4.5.3]{Brenner_Scott_2008} or \cite[Lem. 1.138]{Ern_Guermond_2004}. 
\begin{proposition} Under hypotheses $\rm(H1)$--$\rm(H3)$, it follows that, 
\begin{equation}\label{inv_H1toL2_nk}
\|\nabla n_h \|_{L^2(K)}\le C_{\rm inv}\, h_K^{-1} \|n_h\|_{L^2(K)}\quad\forall \, K\in\mathcal{T}_h, \quad\forall \, n_h\in N_h,
\end{equation}
where $C_{\rm inv}>0$ is a constant independent of $h$. 
\end{proposition}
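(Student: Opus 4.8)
The plan is to establish the bound on each fixed simplex $K$ by the classical scaling-to-a-reference-element technique, since the inequality is stated locally and $\mathcal{P}_1$ is finite-dimensional. First I would fix a reference simplex $\hat K\subset\mathds{R}^d$ (for instance the unit right simplex) and, for each $K\in\mathcal{T}_h$, write the affine bijection $F_K:\hat K\to K$ as $F_K(\hat x)=B_K\hat x+b_K$ with $B_K\in\mathds{R}^{d\times d}$ invertible. Given $n_h\in N_h$, its pullback $\hat n:=n_h\circ F_K$ belongs to $\mathcal{P}_1(\hat K)$, because composition with an affine map preserves polynomial degree; this is the feature of piecewise-linear spaces that makes the argument work.

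Next I would record the change-of-variables identities. The substitution $x=F_K(\hat x)$ gives
\[
\|n_h\|_{L^2(K)}^2=|\det B_K|\,\|\hat n\|_{L^2(\hat K)}^2,
\]
while the chain rule $\hat\nabla\hat n=B_K^\top\big((\nabla n_h)\circ F_K\big)$ yields $(\nabla n_h)\circ F_K=B_K^{-\top}\hat\nabla\hat n$, and hence
\[
\|\nabla n_h\|_{L^2(K)}^2\le\|B_K^{-1}\|^2\,|\det B_K|\,\|\hat\nabla\hat n\|_{L^2(\hat K)}^2,
\]
where $\|\cdot\|$ denotes the spectral norm of a matrix.

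The heart of the proof is a norm-equivalence statement on the reference element: since $\mathcal{P}_1(\hat K)$ is finite-dimensional, the maps $\hat v\mapsto\|\hat\nabla\hat v\|_{L^2(\hat K)}$ and $\hat v\mapsto\|\hat v\|_{L^2(\hat K)}$ are comparable, so there is a constant $\hat C=\hat C(\hat K,d)$ with $\|\hat\nabla\hat v\|_{L^2(\hat K)}\le\hat C\,\|\hat v\|_{L^2(\hat K)}$ for every $\hat v\in\mathcal{P}_1(\hat K)$. Substituting this into the two identities above, the factor $|\det B_K|$ cancels and I obtain the $K$-dependent bound $\|\nabla n_h\|_{L^2(K)}\le\hat C\,\|B_K^{-1}\|\,\|n_h\|_{L^2(K)}$.

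It then remains only to absorb the geometric factor $\|B_K^{-1}\|$ into $h_K^{-1}$, and this is where hypothesis $\rm(H2)$ is used. The standard estimate for affine simplices reads $\|B_K^{-1}\|\le h_{\hat K}/\rho_K$, where $\rho_K$ is the inradius of $K$ and $h_{\hat K}$ is the (fixed) diameter of $\hat K$; shape-regularity provides a uniform constant $\sigma$ with $h_K\le\sigma\,\rho_K$, so that $\|B_K^{-1}\|\le(h_{\hat K}\sigma)\,h_K^{-1}$. Taking $C_{\rm inv}:=\hat C\,h_{\hat K}\,\sigma$, which is independent of both $K$ and $h$, completes the argument. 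I do not anticipate a genuine obstacle here: the only delicate point is guaranteeing that the geometric constant is uniform over the whole family $\{\mathcal{T}_h\}_{h>0}$, and this is exactly what the shape-regularity assumption in $\rm(H2)$ secures; the right-angled structure of the simplexes is not needed for this particular estimate (it will instead be used later to obtain the discrete maximum principle).
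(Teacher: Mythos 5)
Your proof is correct and is precisely the standard scaling argument; the paper does not prove this proposition itself but cites \cite[Lem.~4.5.3]{Brenner_Scott_2008} and \cite[Lem.~1.138]{Ern_Guermond_2004}, whose proofs proceed exactly as you do (pullback to a reference simplex, finite-dimensional norm equivalence for $\mathcal{P}_1(\hat K)$, and the bound $\|B_K^{-1}\|\le h_{\hat K}/\rho_K$ combined with shape regularity). Your closing remark is also accurate: only shape regularity is needed here, while the right-angled structure in $\rm(H2)$ is reserved for the discrete maximum principle.
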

Let $\mathcal{I}_h$ be the nodal interpolation operator from $ C^0(\overline\Omega)$ to $N_h$ and consider the discrete inner product 
$$
(n_h,\overline n_h)_h=\int_\Omega \mathcal{I}_{h}( n_h\,\overline n_h)=\sum_{\a\in\mathcal{N}_h}  n_h(\a)\, \overline n_h(\a) \int_\Omega \varphi_\a fo\quad\forall\, n_h,\overline n_h\in  N_h, 
$$
which induces the norm $\| n_h\|_h = \sqrt{( n_h, n_h)_h}$ defined on $N_h$. We recall the following local error estimate. See \cite[Thm. 4.4.4]{Brenner_Scott_2008} or \cite[Thm. 1.103]{Ern_Guermond_2004} for a proof.

\begin{proposition} Under hypotheses $\rm(H1)$--$\rm(H3)$, it follows that, 
\begin{equation}\label{interp_error_nodal_Linf_and_W1inf_for_K}
\|\varphi-\mathcal{I}_h\varphi\|_{L^\infty(K)}\le C_{\rm app} h_K^2 \|\nabla^2\varphi\|_{{L}^\infty(K)} \quad\forall\, K\in\mathcal{T}_h,
\quad\forall\, \varphi\in W^{2,\infty}(K),
\end{equation}
where $C_{\rm app}>0$ is independent of $h$. 
\end{proposition}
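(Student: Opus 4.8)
The plan is to reduce the estimate to a fixed reference simplex by affine equivalence and then invoke the Bramble--Hilbert lemma. Fix $K\in\mathcal{T}_h$ and let $F_K(\hat x)=B_K\hat x+b_K$ be the invertible affine map carrying a fixed reference simplex $\hat K$ onto $K$, with $B_K\in\mathds{R}^{d\times d}$. For $\varphi\in W^{2,\infty}(K)$ set $\hat\varphi:=\varphi\circ F_K$. Because the nodes of $K$ are exactly the images under $F_K$ of the nodes of $\hat K$, and $\mathcal{I}_h$ is characterized by matching nodal values, the nodal interpolant commutes with this pullback, i.e. $(\mathcal{I}_h\varphi)\circ F_K=\hat{\mathcal I}\hat\varphi$, where $\hat{\mathcal I}$ denotes $\mathcal{P}_1$-nodal interpolation on $\hat K$. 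Since the $L^\infty$-norm is invariant under the change of variables $x=F_K(\hat x)$ (no Jacobian weight enters for a supremum, in contrast to $L^p$ with $p<\infty$), it suffices to establish the reference bound $\|\hat\varphi-\hat{\mathcal I}\hat\varphi\|_{L^\infty(\hat K)}\le C\,|\hat\varphi|_{W^{2,\infty}(\hat K)}$ on the fixed domain $\hat K$, and then to transport the seminorm back to $K$.

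For the reference estimate I would first check that $\hat{\mathcal I}$ is well defined and bounded on $W^{2,\infty}(\hat K)$: by the continuous Sobolev embedding $W^{2,\infty}(\hat K)\hookrightarrow C^0(\overline{\hat K})$ (valid on the Lipschitz simplex $\hat K$), pointwise nodal values are meaningful and $\hat{\mathcal I}\colon W^{2,\infty}(\hat K)\to\mathcal{P}_1(\hat K)$ is a bounded linear map. Consequently the error operator $E:=\mathrm{Id}-\hat{\mathcal I}$ is a bounded linear operator $W^{2,\infty}(\hat K)\to L^\infty(\hat K)$ that annihilates $\mathcal{P}_1(\hat K)$, since $\hat{\mathcal I}$ reproduces affine functions exactly. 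Applying the Bramble--Hilbert lemma to $E$ — equivalently, writing $\|E\hat\varphi\|_{L^\infty(\hat K)}=\|E(\hat\varphi-q)\|_{L^\infty(\hat K)}\le\|E\|\,\|\hat\varphi-q\|_{W^{2,\infty}(\hat K)}$ for every $q\in\mathcal P_1(\hat K)$ and using the Deny--Lions estimate $\inf_{q\in\mathcal P_1(\hat K)}\|\hat\varphi-q\|_{W^{2,\infty}(\hat K)}\le C\,|\hat\varphi|_{W^{2,\infty}(\hat K)}$ — yields the desired reference bound with a constant depending only on $\hat K$ and $d$.

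Finally I would transfer the seminorm by the chain rule. Differentiating $\hat\varphi=\varphi\circ F_K$ twice gives $\hat\nabla^2\hat\varphi(\hat x)=B_K^{\top}\,(\nabla^2\varphi)(F_K(\hat x))\,B_K$, so that $|\hat\varphi|_{W^{2,\infty}(\hat K)}\le\|B_K\|^2\,\|\nabla^2\varphi\|_{L^\infty(K)}$, with $\|B_K\|$ the spectral norm. Shape regularity of $\{\mathcal T_h\}_{h>0}$ from hypothesis $\mathrm{(H2)}$ bounds $\|B_K\|\le C\,h_K$ with $C$ independent of $K$ and $h$. Chaining the three steps gives
\[
\|\varphi-\mathcal I_h\varphi\|_{L^\infty(K)}=\|\hat\varphi-\hat{\mathcal I}\hat\varphi\|_{L^\infty(\hat K)}\le C\,|\hat\varphi|_{W^{2,\infty}(\hat K)}\le C\,\|B_K\|^2\,\|\nabla^2\varphi\|_{L^\infty(K)}\le C_{\rm app}\,h_K^2\,\|\nabla^2\varphi\|_{L^\infty(K)},
\]
which is exactly \eqref{interp_error_nodal_Linf_and_W1inf_for_K}. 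The only genuinely conceptual step is the Bramble--Hilbert application: one must confirm that $\hat{\mathcal I}$ is bounded on $W^{2,\infty}(\hat K)$ — this is precisely where the embedding into continuous functions is used — and that it reproduces $\mathcal P_1$; the remainder is bookkeeping with the affine change of variables. The resulting constant $C_{\rm app}$ is independent of $h$ because it aggregates only the fixed reference-element constant, the embedding constant, and the shape-regularity constant.
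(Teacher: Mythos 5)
Your argument is correct and is essentially the standard proof the paper points to (it cites Brenner--Scott, Thm.~4.4.4, and Ern--Guermond, Thm.~1.103, rather than proving the estimate itself): affine pullback to the reference simplex, Bramble--Hilbert/Deny--Lions for the error operator annihilating $\mathcal{P}_1$, and the chain-rule bound $\|B_K\|\le C h_K$ to recover the factor $h_K^2$. No gaps; note only that for this particular $L^\infty$ bound the estimate $\|B_K\|\le h_K/\rho_{\hat K}$ holds for any simplex of diameter $h_K$, so shape regularity is not even needed here (it would be for gradient error bounds involving $\|B_K^{-1}\|$).
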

We next state the equivalence between the norms $\|\cdot\|_h$ and $\|\cdot\|$ in $N_h$ and a discrete commuter approximation property for $\mathcal{I}_h$.  
\begin{proposition} Under hypotheses $\rm(H1)$--$\rm(H3)$, it follows that, for all $n_h,\overline n_h\in N_h$, 
\begin{equation}\label{Equivalence-L2}
\| n_h\| \le \| n_h\|_h \le 5^{1/2} \| n_h\|
\end{equation}
and
\begin{equation}\label{error-L1}
\|n_h \overline n_h-\mathcal{I}_h(n_h \overline n_h)\|_{L^1(\Omega)}\le C_{\rm app} h\, \| n_h\| \, \|\nabla \overline n_h\|,
\end{equation}
where $C_{\rm app}>0$ is independent of $h$. 
\end{proposition}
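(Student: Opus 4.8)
The plan is to establish both inequalities element by element, reducing \eqref{Equivalence-L2} to a comparison of the local lumped and consistent $\mathcal{P}_1$ mass matrices, and \eqref{error-L1} to the local interpolation bound \eqref{interp_error_nodal_Linf_and_W1inf_for_K} combined with the inverse estimate \eqref{inv_H1toL2_nk}. Since $\|n_h\|^2$, $\|n_h\|_h^2$ and the $L^1$ defect all assemble additively over $\mathcal{T}_h$, in each case it suffices to produce a uniform-in-$K$ local estimate and then sum.

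For the norm equivalence, I would work with the explicit local mass matrices. On a simplex $K$ with $d+1$ vertices the consistent $\mathcal{P}_1$ mass matrix is $(M_K)_{ij}=\frac{|K|}{(d+1)(d+2)}(1+\delta_{ij})$, while mass lumping replaces it by the diagonal matrix whose entries are the row sums $\frac{|K|}{d+1}$. Writing $s$ and $q$ for the sum and the sum of squares of the vertex values of $n_h$ on $K$, the local consistent form equals $\frac{|K|}{(d+1)(d+2)}(s^2+q)$ and the lumped form equals $\frac{|K|}{d+1}q$, so their ratio is $\frac{(d+2)q}{s^2+q}$. The elementary bounds $0\le s^2\le (d+1)q$, the latter being Cauchy--Schwarz over the $d+1$ vertices, pin this ratio between $1$ and $d+2$; since $d\le 3$ we obtain $1\le\frac{(d+2)q}{s^2+q}\le 5$ on every $K$. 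Summing the element-wise lower and upper bounds yields $\|n_h\|^2\le\|n_h\|_h^2\le 5\|n_h\|^2$, which is \eqref{Equivalence-L2}. The constant $5^{1/2}$ is sharp and arises from the three-dimensional case, attained exactly when the vertex values sum to zero.

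For \eqref{error-L1}, the key observation is that on each $K$ the product $n_h\overline n_h$ is a quadratic polynomial, hence lies in $W^{2,\infty}(K)$, so \eqref{interp_error_nodal_Linf_and_W1inf_for_K} applies with $\varphi=n_h\overline n_h$. Because $n_h$ and $\overline n_h$ are affine on $K$, their second derivatives vanish and the Hessian reduces to $\nabla^2(n_h\overline n_h)=\nabla n_h\otimes\nabla\overline n_h+\nabla\overline n_h\otimes\nabla n_h$, so that $\|\nabla^2(n_h\overline n_h)\|_{L^\infty(K)}\le 2|\nabla n_h|\,|\nabla\overline n_h|$ with both gradients constant on $K$. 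Bounding $\|\cdot\|_{L^1(K)}$ by $|K|$ times $\|\cdot\|_{L^\infty(K)}$ and inserting $|\nabla n_h|=|K|^{-1/2}\|\nabla n_h\|_{L^2(K)}$ (and likewise for $\overline n_h$) turns the local estimate into $\|n_h\overline n_h-\mathcal{I}_h(n_h\overline n_h)\|_{L^1(K)}\le 2C_{\rm app}h_K^2\|\nabla n_h\|_{L^2(K)}\|\nabla\overline n_h\|_{L^2(K)}$.

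Finally, I would apply the inverse inequality \eqref{inv_H1toL2_nk} to a single factor, $\|\nabla n_h\|_{L^2(K)}\le C_{\rm inv}h_K^{-1}\|n_h\|_{L^2(K)}$, trading one power of $h_K^{-1}$ and one gradient for $\|n_h\|_{L^2(K)}$ and leaving a single power $h_K\le h$. Summing over $K$ and using the discrete Cauchy--Schwarz inequality $\sum_K\|n_h\|_{L^2(K)}\|\nabla\overline n_h\|_{L^2(K)}\le\|n_h\|\,\|\nabla\overline n_h\|$ gives \eqref{error-L1} after renaming $2C_{\rm app}C_{\rm inv}$ as $C_{\rm app}$. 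I expect the only genuine subtlety to be this last conversion: \eqref{interp_error_nodal_Linf_and_W1inf_for_K} naturally yields the symmetric bound in $\|\nabla n_h\|\,\|\nabla\overline n_h\|$, and it is the \emph{asymmetric} use of the inverse inequality that replaces one gradient by $\|n_h\|$ while retaining an honest factor of $h$; the remaining steps are routine assembly and Cauchy--Schwarz.
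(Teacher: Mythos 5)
Your proposal is correct, and it splits naturally into two halves that relate differently to the paper's own proof. For \eqref{error-L1} you follow essentially the same path as the paper: apply \eqref{interp_error_nodal_Linf_and_W1inf_for_K} elementwise to the quadratic $n_h\overline n_h$, use that the Hessian reduces to the symmetrized product of the two constant gradients, convert the $L^\infty(K)$ bound to an $L^1(K)$ bound, apply the inverse inequality \eqref{inv_H1toL2_nk} \emph{asymmetrically} to trade one gradient and one power of $h_K^{-1}$ for $\|n_h\|_{L^2(K)}$, and finish with a discrete Cauchy--Schwarz over the elements; this is exactly the paper's argument, and you correctly identify the asymmetric use of the inverse estimate as the one non-routine step. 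For \eqref{Equivalence-L2}, however, your route is genuinely different: you work locally with the explicit consistent and lumped $\mathcal{P}_1$ mass matrices on each simplex, reduce everything to the elementary ratio $(d+2)q/(s^2+q)$ with $0\le s^2\le(d+1)q$, and sum over $K$, whereas the paper argues globally, using the partition of unity $\sum_{\widetilde\a}\varphi_{\widetilde\a}=1$ to rewrite $\|n_h\|_h^2-\|n_h\|^2$ as a nonnegative sum of squared nodal differences weighted by $\int_\Omega\varphi_\a\varphi_{\widetilde\a}$, and then Young's inequality for the upper bound. Your local version buys something concrete: it yields the dimension-dependent constant $(d+2)^{1/2}$ (so $2$ in $d=2$, $5^{1/2}$ only in $d=3$), it makes the extremal configuration ($s=0$ on an element) transparent, and it avoids the bookkeeping over pairs of nodes that makes the paper's upper-bound step somewhat delicate to track. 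The paper's global argument, on the other hand, does not invoke the explicit simplex mass-matrix formulas and so reads as slightly more mesh-agnostic, but for $\mathcal{P}_1$ elements on simplexes the two are equivalent in scope. No gaps.
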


\begin{proof}  We have
$$
\|n_h\|^2=\displaystyle\sum_{\a\in\mathcal{N}_h} n_h^2(\a) \int_{\Omega}\varphi_\a^2+\sum_{\a\not=\widetilde\a\in\mathcal{N}_h} n_h(\a) n_h(\widetilde\a) \int_{\Omega}\varphi_\a \varphi_{\widetilde\a}
$$
and 
$$
\|n_h\|_h^2=\sum_{\a\in\mathcal{N}_h} n_h^2(\a)\int_\Omega\varphi_\a.
$$
Since $1=\sum_{\widetilde\a\in\mathcal{N}_h}\varphi_{\widetilde\a}$, we write
$$
\|n_h\|_h^2=\sum_{\a,\widetilde\a\in\mathcal{N}_h} n_h^2(\a)\int_\Omega\varphi_\a\varphi_{\widetilde\a}=\sum_{\a\in\mathcal{N}_h} n_h^2(\a)\int_\Omega\varphi_\a^2+ \sum_{\a\not=\widetilde\a\in\mathcal{N}_h}n_h^2(\a)\int_\Omega\varphi_\a\varphi_{\widetilde\a}.
$$
Then 
$$
\begin{array}{rcl}
\|n_h\|_h^2-\|n_h\|^2&=&\displaystyle\sum_{\a>\widetilde\a\in\mathcal{N}_h}(n_h^2(\a)+n_h^2(\widetilde\a)-2n_h(\a) n_h(\widetilde\a))\int_\Omega\varphi_\a\varphi_{\widetilde\a}
\\
&=&\displaystyle\sum_{\a>\widetilde\a\in\mathcal{N}_h}(n_h(\a)-n_h(\widetilde\a))^2\int_\Omega\varphi_\a\varphi_{\widetilde\a}\ge 0.
\end{array}
$$

From the above equality and Young's inequality, we have
$$
\begin{array}{rcl}
\|n_h\|_h^2&=&\displaystyle \|n_h\|^2+ \sum_{\a>\widetilde\a\in\mathcal{N}_h}(n_h(\a)-n_h(\widetilde\a))^2\int_\Omega\varphi_\a\varphi_{\widetilde\a}
\\
&\le&\displaystyle \|n_h\|^2 
+2
\sum_{\a>\widetilde\a\in\mathcal{N}_h}(n_h^2(\a)+n_h^2(\widetilde\a))\int_\Omega\varphi_\a\varphi_{\widetilde\a}
\\
&=&\displaystyle \|n_h\|^2 
+
2
\sum_{\a\in\mathcal{N}_h}n_h^2(\a)\int_\Omega\varphi_\a \sum_{\widetilde\a<a}\varphi_{\widetilde\a}
+ 2
\sum_{\widetilde\a\in\mathcal{N}_h}n_h^2(\widetilde\a)\int_\Omega\varphi_{\widetilde\a} \sum_{a>\widetilde a}\varphi_{\a}
\\
&\le&\displaystyle \|n_h\|^2 +4\|n_h\|^2\le 5\|n_h\|^2.
\end{array}
$$
We now prove \eqref{error-L1}. By using \eqref{interp_error_nodal_Linf_and_W1inf_for_K}, we obtain
$$
\begin{array}{rcl}
\|\mathcal{I}_h(n_h \overline n_h)-n_h \overline n_h\|_{L^1(\Omega)}
&=&\displaystyle \sum_{K\in\mathcal{T}_h} \|\mathcal{I}_h(n_h \overline n_h)-n_h \overline n_h \|_{L^\infty(K)} \int_K 1
\\
&\le&\displaystyle
C_{\rm app}\sum_{K\in\mathcal{T}_h} h_K^2 \|\nabla^2( n_h \overline n_h)\|_{{L}^\infty(K)}
\int_K 1.
\end{array}
$$
Since $n_h, \overline n_h\in \mathds{P}_1(K)$ on $K\in\mathcal{T}_h$,  we write
$$
\nabla^2( n_h \overline n_h)=2\sum_{i,j=1}^d \partial_i n_h \partial_j \overline n_h.
$$
Then, from \eqref{inv_H1toL2_nk} and on noting that $\nabla n_h,\nabla \overline n_h$ are piecewise constant on each $K\in\mathcal{T}_h$, we deduce that 
$$
\begin{array}{rcl}
\|\mathcal{I}_h(n_h \overline n_h)-n_h \overline n_h\|_{L^1(\Omega)}&\le&\displaystyle C_{\rm app}  \sum_{K\in\mathcal{T}_h} h_K^2 \|\nabla n_h\|_{{L}^\infty(K)} \|\nabla\overline n_h \|_{{L}^\infty(K)} \int_K 1
\\
&\le&\displaystyle 
C_{\rm app} \sum_{K\in\mathcal{T}_h} h_K^2 \int_K |\nabla n_h|\, |\nabla\overline n_h |
\\
&\le&\displaystyle
C_{\rm app} C_{\rm inv}\sum_{K\in\mathcal{T}_h} h_K \| n_h\|_{{L}^2(K)} \|\nabla\overline n_h \|_{{L}^2(K)}
\\
&\le& C_{\rm app} C_{\rm inv} \, h \, \|n_h\| \, \|\nabla\overline n_h\|,
\end{array}
$$
from which we conclude that \eqref{error-L1} holds.
\end{proof}
We will need to use an (average) interpolation operator into $N_h$ with the following properties. In particular we use an extension of the Scott-Zhang interpolation operator to $L^1(\Omega)$ function.  We refer to \cite{Scott_Zhang_1990,Girault_Lions_2001} and \cite{Bertoluzza_1999}. 
\begin{proposition} Under hypotheses $\rm(H1)$--$\rm(H3)$, there exists an (average)  interpolation operator  $\mathcal{Q}_h$ from $L^1(\Omega)$ to $N_h$ such that 
\begin{equation}\label{Q-sta}
\|\mathcal{Q}_h \psi\|_{W^{s,p}(\Omega)}\le C_{\rm sta} \| \psi \|_{W^{s,p}(\Omega)}\quad \mbox{for }  s=0,1\mbox{ and } 1\le p\le\infty, 
\end{equation}
\begin{equation}\label{Q-app}
\|\mathcal{Q}_h(\psi)- \psi \|_{W^{s,p} (\Omega)}\le C_{\rm app}  h^{1+m-s} \|\psi \|_{W^{m+1, p}(\Omega)} \quad \mbox{for } 0\le s\le m\le 1,
\end{equation}
and, for all $\psi\in C^\infty(\overline\Omega)$ and $\overline n_h\in N_h$,
\begin{equation}\label{Q-com}
\|\mathcal{Q}_h(\overline n_h \psi)- \overline n_h \psi \|_{W^{s,p} (\Omega)}\le C_{\rm app}  h^{1+m-s} \|\overline n_h \|_{W^{m, p}(\Omega)} \|\psi\|_{W^{m+1,\infty}}\quad \mbox{for } 0\le s\le m\le 1. 
\end{equation}
\end{proposition}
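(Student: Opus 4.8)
The plan is to construct $\mathcal{Q}_h$ as a Scott--Zhang type quasi-interpolant, modified so as to be defined on $L^1(\Omega)$, and then to read off \eqref{Q-sta}--\eqref{Q-com} from three structural properties. Concretely, to each node $\a\in\mathcal{N}_h$ I associate a single $d$-simplex $\sigma_\a\in\mathcal{T}_h$ with $\a\in\overline{\sigma_\a}$, take the $L^2(\sigma_\a)$-biorthogonal dual basis $\{\psi_\a^\sigma\}$ of the local Lagrange basis, and set $\mathcal{Q}_h\psi=\sum_{\a\in\mathcal{N}_h}\big(\int_{\sigma_\a}\psi\,\psi_\a^\sigma\big)\varphi_\a$. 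Because the functionals integrate over full-dimensional simplices rather than faces, $\mathcal{Q}_h$ makes sense for $\psi\in L^1(\Omega)$. The three facts I would record are: (i) \emph{local reproduction of} $N_h$, i.e.\ $(\mathcal{Q}_h v_h)|_K=v_h|_K$ for every $v_h\in N_h$, and in particular local reproduction of $\mathcal{P}_1$; (ii) \emph{locality}, $(\mathcal{Q}_h\psi)|_K$ depends only on $\psi|_{\omega_K}$, where $\omega_K$ denotes the union of the elements meeting the nodes of $K$; and (iii) a \emph{local $L^p$ bound} for each functional, obtained by scaling to the reference simplex, from which $\|\mathcal{Q}_h\psi\|_{L^p(K)}\le C\|\psi\|_{L^p(\omega_K)}$.

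For \eqref{Q-sta} I would first prove the local stability $\|\mathcal{Q}_h\psi\|_{L^p(K)}\le C\|\psi\|_{L^p(\omega_K)}$ from (iii); for $s=1$ I combine this with the inverse inequality \eqref{inv_H1toL2_nk} (in its $L^p$ form) and local reproduction of constants to bound $\nabla\mathcal{Q}_h\psi$, giving $\|\mathcal{Q}_h\psi\|_{W^{1,p}(K)}\le C\|\psi\|_{W^{1,p}(\omega_K)}$. Summing over $K$ and using that, by shape-regularity and quasi-uniformity (H2), the patches $\omega_K$ have uniformly bounded overlap yields \eqref{Q-sta} for all $1\le p\le\infty$. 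For \eqref{Q-app} I use local reproduction to write, on each $K$, $\psi-\mathcal{Q}_h\psi=(\mathrm{Id}-\mathcal{Q}_h)(\psi-P)$ for arbitrary $P\in\mathcal{P}_1$, apply local stability, and then choose $P$ by the Bramble--Hilbert lemma on $\omega_K$ so that $\|\psi-P\|_{W^{s,p}(\omega_K)}\le Ch^{1+m-s}|\psi|_{W^{m+1,p}(\omega_K)}$; a final summation over the finitely overlapping patches closes the estimate. Both steps are standard.

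The crux is \eqref{Q-com}. The obstacle is that $\overline n_h\psi$ is only piecewise smooth: $\overline n_h\in N_h$ is continuous but kinked across interelement faces, so one cannot insert the product into the approximation estimate on the whole patch. My device is to use local reproduction of $N_h$ to subtract off a discrete function that removes the non-smoothness before estimating. For $s=m=0$, fix $K$, pick $x_K\in K$ and set $c_K=\psi(x_K)$; since $c_K\overline n_h\in N_h$, property (i) gives $\big(\mathcal{Q}_h(\overline n_h\psi)-\overline n_h\psi\big)|_K=(\mathrm{Id}-\mathcal{Q}_h)\big(\overline n_h(\psi-c_K)\big)|_K$, and local stability together with $\|\psi-c_K\|_{L^\infty(\omega_K)}\le Ch\|\psi\|_{W^{1,\infty}}$ yields the desired $Ch\|\overline n_h\|_{L^p(\omega_K)}\|\psi\|_{W^{1,\infty}}$. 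For $m=1$ (both $s=0$ and $s=1$) I replace the constant by the nodal interpolant: since $\mathcal{I}_h(\overline n_h\psi)\in N_h$, property (i) gives $\big(\mathcal{Q}_h(\overline n_h\psi)-\overline n_h\psi\big)|_K=(\mathrm{Id}-\mathcal{Q}_h)(\zeta)|_K$ with $\zeta=\overline n_h\psi-\mathcal{I}_h(\overline n_h\psi)$. Now $\zeta$ is continuous and is controlled elementwise by the nodal interpolation estimate \eqref{interp_error_nodal_Linf_and_W1inf_for_K} (and its $W^{s,p}$ analogues), the key point being that $\overline n_h$ is affine on each $K'$, so that $\nabla^2(\overline n_h\psi)=2\,\nabla\overline n_h\cdot\nabla\psi+\overline n_h\nabla^2\psi$ satisfies $|\overline n_h\psi|_{W^{2,p}(K')}\le C\|\overline n_h\|_{W^{1,p}(K')}\|\psi\|_{W^{2,\infty}}$. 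Hence $\|\zeta\|_{W^{s,p}(\omega_K)}\le Ch^{2-s}\|\overline n_h\|_{W^{1,p}(\omega_K)}\|\psi\|_{W^{2,\infty}}$, and applying the $W^{s,p}$ local stability of $\mathcal{Q}_h$ and summing over the finitely overlapping patches gives exactly $h^{1+m-s}=h^{2-s}$.

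I expect the main difficulty to be precisely this last step, \eqref{Q-com}. Stability and approximation of the quasi-interpolant, and the finite-overlap summation, are routine once the construction is fixed; what makes \eqref{Q-com} work is the reduction through local reproduction of $N_h$ --- by a constant when $m=0$ and by the nodal interpolant when $m=1$ --- which converts the product of a kinked discrete function with a smooth one into an object amenable to the elementwise interpolation machinery, while the piecewise-affine structure of $\overline n_h$ is what keeps the second-order product seminorm bounded by $\|\overline n_h\|_{W^{1,p}}\|\psi\|_{W^{2,\infty}}$ rather than by a larger quantity.
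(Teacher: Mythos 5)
Your construction and estimates are correct, and they follow the same route the paper intends: the paper gives no proof of this proposition but delegates it to \cite{Scott_Zhang_1990,Girault_Lions_2001} for the $L^1$-defined, volume-averaged Scott--Zhang quasi-interpolant with the stability and approximation bounds \eqref{Q-sta}--\eqref{Q-app}, and to \cite{Bertoluzza_1999} for the discrete commutator property \eqref{Q-com}. Your treatment of \eqref{Q-com} --- reducing via reproduction of $N_h$ to the elementwise nodal-interpolation error of $\overline n_h\psi$ and exploiting that $\overline n_h$ is affine on each element so that $|\overline n_h\psi|_{W^{2,p}(K')}\le C\|\overline n_h\|_{W^{1,p}(K')}\|\psi\|_{W^{2,\infty}}$ --- is precisely the superapproximation argument of the cited reference, so the proposal matches the paper's (outsourced) proof in substance.
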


The key point in proving a discrete maximum principle is the following property which is accomplished for right-angled simplexes assumed in $\rm (H2)$. 
\begin{proposition} Under hypotheses $\rm(H1)$--$\rm(H3)$, it follows that, for any diagonal nonnegative matrix $D={\rm diag}(d_i)_{i=1}^d$ (with $d_i\ge 0$),
\begin{equation}\label{off-diagonal} 
D\nabla\varphi_{\a}\cdot\nabla\varphi_{\widetilde \a}\le0
\quad \hbox{a.e.~in $\Omega$}
\end{equation} if $\a\not = \widetilde \a$ with $\a,\widetilde\a\in {\mathcal N}_h$.
\end{proposition}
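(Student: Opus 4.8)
The plan is to localize the statement to a single element and then reduce it to a sign computation on the coordinate components of the local basis gradients. Since each $\varphi_\a$ is piecewise affine, the product $\nabla\varphi_\a\cdot\nabla\varphi_{\widetilde\a}$ is piecewise constant, so it suffices to argue element by element, away from the measure-zero set $\cup_{K}\partial K$. On a given $K\in\mathcal{T}_h$ the restriction $\varphi_\a|_K$ vanishes identically unless $\a$ is a vertex of $K$; hence $\nabla\varphi_\a\cdot\nabla\varphi_{\widetilde\a}$ can be nonzero on $K$ only when both $\a$ and $\widetilde\a$ are \emph{distinct} vertices of $K$. Everything thus comes down to proving, for each such $K$ and each pair of distinct vertices, that
\[
D\,\nabla(\varphi_\a|_K)\cdot\nabla(\varphi_{\widetilde\a}|_K)=\sum_{m=1}^d d_m\,(\partial_m\varphi_\a)(\partial_m\varphi_{\widetilde\a})\le0.
\]
Because every $d_m\ge0$, it is enough to establish the stronger, weight-free, componentwise inequality $(\partial_m\varphi_\a)(\partial_m\varphi_{\widetilde\a})\le0$ for each coordinate index $m$.

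First I would use hypothesis $\rm(H2)$ to write the right-angled simplex $K$ explicitly. Denoting by $P_0$ its right-angle vertex and by $P_1,\dots,P_d$ the remaining vertices, the legs $P_m-P_0$ are parallel to the coordinate axes, say $P_m-P_0=s_m a_m e_m$ with $a_m>0$, $s_m\in\{-1,+1\}$, and $e_m$ the $m$-th canonical vector. Writing the local nodal basis as the barycentric coordinates and using that each $\varphi_{P_m}$ is affine and vanishes at every vertex but $P_m$, a direct computation gives $\nabla\varphi_{P_m}=(s_m/a_m)\,e_m$ for $m=1,\dots,d$ and $\nabla\varphi_{P_0}=-\sum_{m=1}^d (s_m/a_m)\,e_m$. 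This is the crucial structural consequence of the legs being axis-parallel: each $\nabla\varphi_{P_m}$ with $m\ge1$ is supported on a single coordinate direction, and $\nabla\varphi_{P_0}$ is the negative of their sum.

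With these explicit gradients the componentwise inequality is immediate. For two distinct vertices $P_i,P_j$ with $i,j\ne0$, the gradients live on different coordinate directions, so $(\partial_m\varphi_{P_i})(\partial_m\varphi_{P_j})=0$ for every $m$. For the pair $P_0,P_j$ one finds $(\partial_m\varphi_{P_0})(\partial_m\varphi_{P_j})=-(s_j/a_j)^2\,\delta_{mj}\le0$, the sign being independent of the orientation $s_j$ since $s_j^2=1$. In either case the componentwise product is nonpositive, so multiplying by $d_m\ge0$ and summing over $m$ yields $D\nabla\varphi_\a\cdot\nabla\varphi_{\widetilde\a}\le0$ on $K$, hence a.e.\ in $\Omega$.

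The only genuinely delicate point — and the reason $\rm(H2)$ insists on right angles with axis-parallel legs rather than merely non-obtuse simplices — is that the claim must hold for \emph{every} nonnegative diagonal $D$. The familiar discrete-maximum-principle estimate $\nabla\varphi_\a\cdot\nabla\varphi_{\widetilde\a}\le0$ corresponds only to the choice $D=\mathrm{Id}$ and follows from non-obtuseness of the dihedral angles; it does not survive an arbitrary diagonal reweighting of the coordinates, as a rotated right triangle already shows. Securing the stronger componentwise sign is therefore where the axis-parallel hypothesis is used in an essential way, and it is the step I would set up most carefully.
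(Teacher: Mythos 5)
Your proof is correct, and it reaches the same structural fact as the paper's — that on a right-angled, axis-aligned simplex each basis gradient $\nabla\varphi_{P_m}$ with $m\ge 1$ points along a single coordinate axis while $\nabla\varphi_{P_0}$ is minus their sum — but by a more direct route. The paper pulls everything back to the reference unit simplex, expresses the reference gradients via outward face normals, and then transports the sign condition through the affine map $F_K\widehat{\boldsymbol{x}}=\boldsymbol{a}_0+B_K\widehat{\boldsymbol{x}}$, asserting that $B_K$ is orthogonal so that $B_K^TDB_K$ ``preserves angles.'' That step is the weakest point of the published argument: for a general orthogonal $B_K$ the matrix $B_K^TDB_K$ need not be diagonal and the sign of $\boldsymbol{n}_i^TB_K^TDB_K\boldsymbol{n}_j$ is not controlled by that of $\boldsymbol{n}_i\cdot\boldsymbol{n}_j$; what actually saves the argument is that, for an axis-aligned right simplex, $B_K$ is a diagonal scaling (times a signed permutation), so $B_K^TDB_K$ is again diagonal and nonnegative. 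Your computation of $\nabla\varphi_{P_m}=(s_m/a_m)e_m$ and $\nabla\varphi_{P_0}=-\sum_m(s_m/a_m)e_m$ in physical coordinates bypasses the pullback entirely, makes the componentwise inequality $(\partial_m\varphi_\a)(\partial_m\varphi_{\widetilde\a})\le 0$ explicit, and correctly isolates the axis-parallel hypothesis (stated by the authors in the ``numerical scheme'' subsection rather than in (H2) itself) as the place where the claim for \emph{arbitrary} nonnegative diagonal $D$ — as opposed to $D=I_d$, which only needs nonobtuseness — is genuinely used. In short: same underlying idea, cleaner and more robust execution.
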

\begin{proof}  For every right-angled $d$-simplex $K\in\mathcal{T}_h$ of vertices $\{ \boldsymbol{a}_i\}_{i=0,\dots,d}$ with $\boldsymbol{a}_0$ being the vertex supporting the right angle, we denote by $F_{\boldsymbol{a}_i}$ the opposite face  to $\boldsymbol{a}_i$ and by $\boldsymbol{n}_{\boldsymbol{a}_i}$ the exterior (to the $d$-simplex $K$) unit normal vector to the face $F_{\boldsymbol{a}_i}$.  Let $\widehat K$ be the reference unit $d$-simplex with vertices $\widehat{ \boldsymbol{a}}_0 = \boldsymbol{0}$ and $\widehat{\boldsymbol{a}}_i= \boldsymbol{e}_i$, $i = 1,\cdots,d$, where $\{\boldsymbol{e}_i\}_{i = 1,\cdots,d}$ is the canonical basis of $\mathds{R}^d$. Let $F_K$ be the invertible affine mapping that maps $\widehat K$ onto $K$ defined by $F_K \widehat{\boldsymbol{x}}=\boldsymbol{a}_0+B_K \widehat{\boldsymbol{x}}$, where  $B_K\in \mathds{R}^{d\times d}$ is orthogonal.

Let $\widehat{\varphi}_{\widehat{\boldsymbol{a}}_i}(\widehat{\boldsymbol{x}})=\varphi_{\boldsymbol{a}_i}(F_K\widehat{\boldsymbol{x}})$. Then we have 
$$
\widehat \nabla\widehat\varphi_{\widehat{\boldsymbol{a}}_i}=-\frac{1}{d}\frac{|\widehat F_{\widehat{\boldsymbol{a}}_i}|}{|\widehat K|}\boldsymbol{n}_{\widehat{\boldsymbol{a}}_i}.
$$
In particular,  $\boldsymbol{n}_{\boldsymbol{\widehat a}_i}=-\boldsymbol{e}_{i}$ if $i\not=0$ and $\boldsymbol{n}_{\boldsymbol{\widehat a}_0}=[1,\cdots,1]^T$. Thus, we obtain
$$
\widehat \nabla\widehat\varphi_{\widehat{\boldsymbol{a}}_i}\cdot \widehat\nabla\widehat\varphi_{\widehat{\boldsymbol{a}}_j}=\frac{1}{d^2}\frac{|\widehat F_{\widehat{\boldsymbol{a}}_i}||\widehat F_{\widehat{\boldsymbol{a}}_j}|}{|\widehat K|^2}\boldsymbol{n}_{\boldsymbol{\widehat a}_i}\cdot\boldsymbol{n}_{\boldsymbol{\widehat a}_j}\le 0
\quad \hbox{if $i\not=j$}.
$$
Therefore, by means of the change of variable $\boldsymbol{x}=\boldsymbol{a}_0+B_K \widehat{\boldsymbol{x}}$, it follows that $\nabla\varphi_{\a_i}=B_K\widehat \nabla\widehat\varphi_{\widehat{\boldsymbol{a}}_i}$ and hence
$$
D\nabla\varphi_{\a_i}\cdot\nabla\varphi_{ \a_j}=D B_K\widehat \nabla\widehat\varphi_{\widehat{\boldsymbol{a}}_i}\cdot B_K\widehat \nabla\widehat\varphi_{\widehat{\boldsymbol{a}}_j}=\frac{1}{d^2}\frac{|\widehat F_{\widehat{\boldsymbol{a}}_i}||\widehat F_{\widehat{\boldsymbol{a}}_j}|}{|\widehat K|^2}  \boldsymbol{n}_{\boldsymbol{\widehat a}_i}^T B_K^T D B_K \boldsymbol{n}_{\boldsymbol{\widehat a}_j}\le 0
 \quad \hbox{if $i\not=j$}
$$
because, since $B_K$ is a orthogonal matrix,  the inner products defined by $D$ and $B_K^T D B_K$ preserves angles. 
\end{proof}
\begin{remark} When $D=I_d$ with $I_d$ being the $d\times d$ identity matrix, property \eqref{off-diagonal} can be proved for nonobtuse triangulations \cite{Ciarlet_Raviart_1973}. Then property \eqref{off-diagonal} can be somewhat seen a  generalization restricted for right-angled triangulations. 
\end{remark}
\
Let us now introduce the discrete Laplacian associated to the mass-lumping scalar product $(\cdot, \cdot)_h$. For any  $\Sigma_h\in N_h$, let $-\widetilde\Delta_h\Sigma_h\in N_h$ solve
\begin{equation}\label{Discrete-Laplacian-P1}
-(\widetilde\Delta_h \Sigma_h, \overline n_h)_h= (\nabla \Sigma_h, \nabla \overline n_h)  \quad  \forall\, \overline n_h\in N_h. 
\end{equation}

We end up with a compactness result \cite[Lm. 2.4]{Becker_Feng_Prohl_2008} needed in proving  the equivalence between problems \eqref{Hele-Shaw} and \eqref{Hele-Shaw_II}.
\begin{theorem}\label{th:L2H1-strong} 
Assume that $\rm(H1)$-$\rm (H3)$ holds. Let $\frac{2 d}{d +2} < \ell < \infty$. Suppose that $\{\rho_{h,k}\}_{h,k\ge0} \subset L^2(0,T; L^2(\Omega))$ is such that $\rho_{h,k}(t,\cdot)\in N_h$ for all $t\in [0,T]$ and satisfies
$$
\|\rho_{h,k}\|_{H^1(0,T; L^\ell(\Omega))}+\|\rho_{h,k}\|_{L^\infty(0,T; L^2(\Omega))\cap L^2(0,T; H^1(\Omega))}+\|\widetilde \Delta_h\rho_{h,k}\|_{L^2(0,T;L^2(\Omega))}\le C_{\rm dat}.
$$ 
Then there exist a subsequence $\{\rho_{h,k}\}_{h,k>0}$ (not relabeled) and a limit function $\rho$, such that
$$
\rho_{h,k}\to\rho\quad \mbox{in } L^2(0,T, H^1(\Omega))\mbox{-strongly as }\quad (h,k)\to (0,+\infty).
$$
\end{theorem}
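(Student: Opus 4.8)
The plan is to combine a discrete Aubin--Lions argument, which by itself yields only strong convergence in $L^2(0,T;L^2(\Omega))$ together with weak convergence in $L^2(0,T;H^1(\Omega))$, with an energy-type identification of limits that upgrades the gradient convergence from weak to strong. The bound on $\widetilde\Delta_h\rho_{h,k}$ is precisely what powers this upgrade, so the whole argument is organized around exploiting it.

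First I would extract a single subsequence (not relabeled) along which three convergences hold simultaneously. From the bounds in $L^2(0,T;H^1(\Omega))$ and $H^1(0,T;L^\ell(\Omega))$, the Aubin--Lions--Simon lemma applied to the triple $H^1(\Omega)\hookrightarrow\hookrightarrow L^2(\Omega)\hookrightarrow (H^1(\Omega))'$ produces $\rho_{h,k}\to\rho$ strongly in $L^2(0,T;L^2(\Omega))$; here the hypothesis $\ell>\frac{2d}{d+2}$ enters through the Sobolev embedding $H^1(\Omega)\hookrightarrow L^{\ell'}(\Omega)$ and duality, which gives $L^\ell(\Omega)\hookrightarrow (H^1(\Omega))'$ and hence $\partial_t\rho_{h,k}$ bounded in $L^2(0,T;(H^1(\Omega))')$. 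The $L^2(0,T;H^1(\Omega))$ bound then gives, after a further subsequence, $\rho_{h,k}\rightharpoonup\rho$ weakly in $L^2(0,T;H^1(\Omega))$ (the two limits agree because they coincide in $L^2(0,T;L^2(\Omega))$), so that $\nabla\rho_{h,k}\rightharpoonup\nabla\rho$ weakly in $L^2(0,T;L^2(\Omega))$. Finally, the bound on $\widetilde\Delta_h\rho_{h,k}$ yields a weak limit $\widetilde\Delta_h\rho_{h,k}\rightharpoonup\chi$ in $L^2(0,T;L^2(\Omega))$ for some $\chi$.

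Next I would identify $\chi$ as the Neumann Laplacian of $\rho$. Fix $\phi\in C^\infty(\overline\Omega)$ (with no boundary condition, consistent with the Neumann character of $\widetilde\Delta_h$ and with $N_h$ being unconstrained on $\partial\Omega$) and test \eqref{Discrete-Laplacian-P1} with $\overline n_h=\mathcal{Q}_h\phi$, obtaining $-(\widetilde\Delta_h\rho_{h,k},\mathcal{Q}_h\phi)_h=(\nabla\rho_{h,k},\nabla\mathcal{Q}_h\phi)$. On the right, $\nabla\mathcal{Q}_h\phi\to\nabla\phi$ strongly in $L^2(\Omega)$ by \eqref{Q-app} while $\nabla\rho_{h,k}\rightharpoonup\nabla\rho$, so this term converges to $(\nabla\rho,\nabla\phi)$ after integration against any time weight. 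On the left, I would first replace $(\cdot,\cdot)_h$ by $(\cdot,\cdot)$ at the cost of an error controlled by \eqref{error-L1}, namely $\le C\,h\,\|\widetilde\Delta_h\rho_{h,k}\|\,\|\nabla\mathcal{Q}_h\phi\|$, which vanishes after integration in time thanks to the uniform bound $C_{\rm dat}$ and the stability \eqref{Q-sta}; the remaining term $(\widetilde\Delta_h\rho_{h,k},\mathcal{Q}_h\phi)$ converges to $(\chi,\phi)$ by weak--strong convergence. Passing to the limit gives $-\int_0^T(\chi,\phi)\,dt=\int_0^T(\nabla\rho,\nabla\phi)\,dt$ for all smooth $\phi$ and, by density, for all $\phi\in L^2(0,T;H^1(\Omega))$. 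This is exactly the statement that $\chi=\Delta\rho$ with $\nabla\rho\cdot\n=0$ in the weak sense; choosing $\phi=\rho$ yields $-\int_0^T(\chi,\rho)\,dt=\int_0^T\|\nabla\rho\|^2\,dt$.

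The final step is the norm-convergence argument. Taking $\overline n_h=\rho_{h,k}$ in \eqref{Discrete-Laplacian-P1} gives $\|\nabla\rho_{h,k}\|^2=-(\widetilde\Delta_h\rho_{h,k},\rho_{h,k})_h$; integrating in time, replacing $(\cdot,\cdot)_h$ by $(\cdot,\cdot)$ with an $O(h)$ error that again vanishes by \eqref{error-L1} and the bound $C_{\rm dat}$, and using the weak--strong pairing of $\widetilde\Delta_h\rho_{h,k}\rightharpoonup\chi$ against $\rho_{h,k}\to\rho$, I obtain $\int_0^T\|\nabla\rho_{h,k}\|^2\,dt\to-\int_0^T(\chi,\rho)\,dt=\int_0^T\|\nabla\rho\|^2\,dt$. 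Thus the gradients converge in norm; combined with their weak convergence in the Hilbert space $L^2(0,T;L^2(\Omega))$ this forces $\nabla\rho_{h,k}\to\nabla\rho$ strongly, and together with the strong convergence $\rho_{h,k}\to\rho$ in $L^2(0,T;L^2(\Omega))$ we conclude $\rho_{h,k}\to\rho$ strongly in $L^2(0,T;H^1(\Omega))$. I expect the main obstacle to be the third step, namely correctly identifying the weak limit $\chi$ as the Neumann Laplacian of $\rho$ while simultaneously killing the inconsistency introduced by the mass-lumped product $(\cdot,\cdot)_h$: both the identification and the ensuing energy equality hinge on the interpolation and quadrature estimates \eqref{Equivalence-L2}, \eqref{error-L1} and \eqref{Q-app} being sharp enough that every discrete/continuous discrepancy disappears in the limit $(h,k)\to(0,+\infty)$.
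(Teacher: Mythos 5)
The paper does not actually prove Theorem \ref{th:L2H1-strong}: it is quoted verbatim from the literature (the text introduces it as ``a compactness result [Becker--Feng--Prohl, Lem.~2.4]''), so there is no in-paper argument to compare against. Your proof is, on its own terms, correct and self-contained. The Aubin--Lions step is set up properly: $\ell>\frac{2d}{d+2}$ gives $H^1(\Omega)\hookrightarrow L^{\ell'}(\Omega)$ and hence $L^\ell(\Omega)\hookrightarrow (H^1(\Omega))'$, so the $H^1(0,T;L^\ell(\Omega))$ bound controls $\partial_t\rho_{h,k}$ in $L^2(0,T;(H^1(\Omega))')$ and yields strong $L^2(0,T;L^2(\Omega))$ convergence. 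The identification of the weak limit $\chi$ of $\widetilde\Delta_h\rho_{h,k}$ through $-(\chi,\phi)=(\nabla\rho,\nabla\phi)$, and the subsequent norm-convergence argument
$\int_0^T\|\nabla\rho_{h,k}\|^2\to -\int_0^T(\chi,\rho)=\int_0^T\|\nabla\rho\|^2$, correctly upgrade weak to strong gradient convergence in the Hilbert space $L^2(0,T;L^2(\Omega))$; the mass-lumping discrepancies are genuinely $O(h)$ by \eqref{error-L1} paired with the uniform bounds on $\|\widetilde\Delta_h\rho_{h,k}\|_{L^2(0,T;L^2(\Omega))}$ and $\|\nabla\rho_{h,k}\|_{L^2(0,T;L^2(\Omega))}$, so they vanish in the limit. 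This ``weak convergence plus convergence of norms'' route is a clean alternative to the argument in the cited reference (which proceeds by comparing $\rho_{h,k}$ with an auxiliary elliptic projection and invoking $H^2$ regularity); your version has the advantage of not needing elliptic regularity of the domain beyond what \eqref{Discrete-Laplacian-P1} and the interpolation estimates already provide. The only cosmetic caveat is that the identification of $\chi$ as the \emph{Neumann} Laplacian is not actually needed -- the variational identity $-\int_0^T(\chi,\phi)=\int_0^T(\nabla\rho,\nabla\phi)$ for $\phi\in L^2(0,T;H^1(\Omega))$ is all the argument uses.
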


Hereafter $C$ will denote a generic constant whose value may change at each occurrence. This constant may depend on the data problem and the constants $C_{\rm inv}$, $C_{\rm app}$,  $C_{\rm com }$ and $C_{\rm dat}$.

\subsection{The numerical scheme}
In order to avoid dense technical calculations, we assume for simplicity that each element $K\in \mathcal{T}_h$ has its edges lined up with the axes.    

The numerical scheme relies on a finite-element method combined with a closed-nodal integration applied to the time-derivative and pressure-migration terms. Thus our numerical  method which consists in finding $n_{h,k}\in C^1([0,T]; N_h)$ such that   
\begin{equation}\label{FEM}
\left\{
\begin{array}{l}
(\partial_t n_{h,k}, \overline n_h)_h + ( \nabla \mathcal{I}_h((n_{h,k})^k), \nabla \overline n_h )+\nu (\nabla n_{h,k},  \nabla \overline n_h)
  =(G(p (n_{h,k})) n_{h,k}, \overline n_h)_h
\quad  \forall\, \overline n\in N_h
\\
n_{h,k}(0)=n_{h,k}^0,
\end{array}
\right.
\end{equation}
with $p(n_{h,k})=\dfrac{k}{k-1} (n_{h,k})^{k-1}$. 

Equivalently, we may write $\eqref{FEM}_1$ as
\begin{equation} \label{FEM-b}
(\partial_t n_{h,k}, \overline n_h)_h + (\mathcal{D}(n_{h,k}) \nabla n_{h,k}, \nabla \overline n_h )+\nu (\nabla n_{h,k},  \nabla \overline n_h)  = (G(p(n_{h,k})) n_{h,k}, \overline n_h)_h,
\end{equation}
where $\mathcal{D}(n_{h,k})$ is a piecewise constant, $d\times d$ diagonal matrix function with respect to $\mathcal{T}_h$ defined as follows. Let $K\in\mathcal{T}_h$ with vertices $\{ \boldsymbol{a}_i\}_{i=0,\cdots,d}$ where $\boldsymbol{a}_0$ corresponds to the right angle. Then 
\begin{equation}\label{Descrite_Derivative}
[\mathcal{D}(n_{h,k})|_K]_{ii}=
\left\{
\begin{array}{cl}
  \dfrac{(n_{h,k})^k(\boldsymbol{a}_i)-(n_{h,k})^k(\boldsymbol{a}_0)}{n_{h,k}(\boldsymbol{a}_i)-n_{h,k}(\boldsymbol{a}_0)}
  & \hbox{if }  n_{h,k}(\boldsymbol{a}_i)-n_{h,k}(\boldsymbol{a}_0)\not=0,
\\
0 & \hbox{if }  n_{h,k}(\boldsymbol{a}_i)-n_{h,k}(\boldsymbol{a}_0)=0.
\end{array}
\right.
\end{equation}
By the mean value theorem, one can write
\begin{equation} \label{ident-D}
[\mathcal{D}(n_{h,k})|_K]_{ii}=k\, (n_{h,k})^{k-1}(\boldsymbol{\xi}_{i}),
\end{equation}
 where $\boldsymbol{\xi}_{i}=\alpha\boldsymbol{a}_i+(1-\alpha)\boldsymbol{a}_0$ for a certain  $\alpha\in (0,1)$.

The above choice for the sequence of $\{n_{h,k}^0\}_{h,k>0}$ is as follows. Let $\{n^0_k\}_{k\in\mathds{N}}\subset H^1(\Omega)\cap L^\infty(\Omega)$ satisfy \eqref{Min-Max-Initial-Data-p} and \eqref{Min-Max-Initial-Data-n}. Then we select $n^0_{h,k}=\mathcal{Q}_h(n^0_k)$ so that 
\begin{equation} \label{bound-n0}
0\le n^0_{h,k}(\a) \le N_{\rm max}(k)\quad \forall \hbox{$\a\in\mathcal{N}_h$},
\qquad \|\nabla n^0_{h,k}\|\le C_{stab} \|\nabla n^0_k\|,
\end{equation}
\begin{equation} \label{ID-limit-h}
n^0_{h,k}\to n^0_{k}\quad \hbox{ in $H^1(\Omega)$-strongly as $h\to 0$.}
\end{equation}
There is an additional technicality regarding the sequence of initial data that we must consider: 
\begin{enumerate}
\item[{\rm (H4)}] Assume $\{n_{h,k}\}_{h,k>0}$ to be such that  
\begin{equation}\label{Positivity-Derivative}
-(\nabla \mathcal{I}_h (n^0_{h,k})^k, \nabla \overline n_h)-\nu (\nabla n^0_{h,k} , \nabla \overline n_h)+(G(p(n^0_{h,k})) n^0_{h,k}, \overline n_h)_h\ge0\quad \forall \overline n_h\in N_h\mbox{ with } \overline n_h\ge 0.
\end{equation}
 
 \end{enumerate}
\begin{remark}
This last condition is related to imposing $\partial_t n_{h,k}(0)\ge0$ which is crucial to prove the $k\to +\infty$ limit. 
\end{remark}

The existence and uniqueness of a solution to scheme \eqref{FEM} may be readily justified by Picard's theorem. To be more precise, one may prove that there exists a time interval $[0,T_h)$ for which problem \eqref{FEM} is uniquely solvable. As a consequence of a priori energy estimates, which we shall prove in the next section, one deduces that $T_h=T$ for all $h>0$.

\subsection{Main result} We now are ready to state our main result of this paper. We shall prove that  scheme \eqref{FEM} produces a sequence of discrete solutions which satifies a priori energy bounds uniform with respect to $(h,k)$ allowing us to pass to the limit as $(h,k)\to (0,+\infty)$ towards weak solutions of the Hele--Shaw-like system \eqref{Hele-Shaw}-\eqref{Complemetary-Hele-Shaw}. 

\begin{theorem}\label{Th:Main} Assume that {\rm (H1)-(H3)} hold. Then the discrete solution $\{(n_{h,k}, p_{h,k})\}_{h,k}$ of \eqref{FEM} satisfies the following estimates, for all $\a\in {\mathcal N}_h$  and $t\in[0,T]$:
$$
0\le n_{h,k}(\a,t )\le N_{\rm max}(k),
$$
$$
0\le p(n_{h,k}(\a,t ))\le P_{\rm max},
$$
$$
\partial_t n_{h,k}(\a,t)\ge 0, \quad \partial_t p(n_{h,k}(\a, t))\ge 0.
$$
Furthermore, $\{n_{h,k},\mathcal{I}_h((n_{h,k})^k)\}_{h,k}$ converges towards weak solutions  $(n_\infty, p_\infty)$ of problem \eqref{Hele-Shaw}-\eqref{Complemetary-Hele-Shaw} in the sense that 
$$
n_{h,k} \to n_\infty \quad \mbox{ in $L^\infty(0,T; H^1(\Omega))$-weakly-$\star$ and 
 in $L^p((0,T)\times \Omega)$-strongly},
$$
and
$$
\mathcal{I}_h( (n_{h,k})^k) \to  p_\infty \quad \mbox{ in $L^\infty(0,T; H^1(\Omega))$-weakly-$\star$ and in $L^p((0,T)\times\Omega)$-strongly},
$$
for any $1<p<\infty$ provided that 
\begin{equation}\tag{\rm H5}
k\,h \to 0\quad \mbox{ as }\quad (h,k)\to (0,+\infty).
\end{equation}
\end{theorem}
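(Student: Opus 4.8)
The plan is to proceed in four stages: a nodal discrete maximum principle, the positivity of the discrete time derivative, uniform-in-$(h,k)$ a priori estimates, and a compactness/passage-to-the-limit argument. For the first stage I would test \eqref{FEM-b} with the hat function $\varphi_\a$. Since $(\cdot,\cdot)_h$ is a mass-lumped product with positive weights $m_\a=\int_\Omega\varphi_\a$, this collapses \eqref{FEM-b} into the ODE system $m_\a\,\partial_t n_{h,k}(\a)=-\sum_{\widetilde\a}A_{\a\widetilde\a}\,n_{h,k}(\widetilde\a)+m_\a\,G(p(n_{h,k}(\a)))\,n_{h,k}(\a)$ for the nodal values, with $A_{\a\widetilde\a}=\int_\Omega(\mathcal D(n_{h,k})+\nu I_d)\nabla\varphi_{\widetilde\a}\cdot\nabla\varphi_\a$. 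Because $\mathcal D(n_{h,k})+\nu I_d$ is a nonnegative diagonal matrix as long as $n_{h,k}\ge0$ (see \eqref{Descrite_Derivative}--\eqref{ident-D}), property \eqref{off-diagonal} gives $A_{\a\widetilde\a}\le0$ for $\widetilde\a\ne\a$, while $\sum_{\widetilde\a}\varphi_{\widetilde\a}\equiv1$ forces the row sums to vanish; this is the cooperative/$M$-matrix structure. Evaluating at a node $\a_\ast$ realizing the spatial minimum, and using the vanishing row sum to write $-\sum_{\widetilde\a}A_{\a_\ast\widetilde\a}n_{h,k}(\widetilde\a)=-\sum_{\widetilde\a\ne\a_\ast}A_{\a_\ast\widetilde\a}(n_{h,k}(\widetilde\a)-n_{h,k}(\a_\ast))\ge0$: if that minimum is $0$ the reaction term also vanishes, so $\partial_t n_{h,k}(\a_\ast)\ge0$ and the value cannot cross $0$. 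At a node realizing a maximum equal to $N_{\rm max}(k)$ the diffusion term is $\le0$ and the reaction term vanishes because $p(N_{\rm max}(k))=P_{\rm max}$ and $G(P_{\rm max})=0$ by \eqref{prop:G}; hence $\partial_t n_{h,k}(\a_\ast)\le0$. This yields $0\le n_{h,k}(\a,t)\le N_{\rm max}(k)$ and, via \eqref{def_p(n)}, $0\le p(n_{h,k}(\a,t))\le P_{\rm max}$.

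For the monotonicity in time I would differentiate \eqref{FEM} with respect to $t$. Since $\mathcal I_h$ is linear and determined by nodal values, $\partial_t\nabla\mathcal I_h((n_{h,k})^k)=\nabla\mathcal I_h(k\,(n_{h,k})^{k-1}\partial_t n_{h,k})$ in the nodal sense, so setting $w=\partial_t n_{h,k}$ and testing with $\varphi_\a$ produces a \emph{linear} cooperative ODE system for $w(\a)$ whose principal part again carries the sign structure \eqref{off-diagonal} and whose reaction linearization is $[G'(p)p'(n)n+G(p)]\,w$. At $t=0$, hypothesis \eqref{Positivity-Derivative} supplies $w(\a,0)\ge0$. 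Repeating the Stage-1 barrier argument at a node where $w$ attains a minimum equal to $0$, the zeroth-order linearized reaction vanishes there while the diffusion contribution has the favourable sign, so $w$ stays nonnegative; thus $\partial_t n_{h,k}(\a,t)\ge0$, and $\partial_t p(n_{h,k}(\a,t))=p'(n_{h,k}(\a,t))\,\partial_t n_{h,k}(\a,t)\ge0$.

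For the a priori estimates, Stage 1 already gives $\|n_{h,k}\|_{L^\infty((0,T)\times\Omega)}\le N_{\rm max}(k)\le N_0$ and, since $N_{\rm max}(k)^k\to P_{\rm max}$, a uniform $L^\infty$ bound on $\mathcal I_h((n_{h,k})^k)$. Testing \eqref{FEM-b} with $n_{h,k}$ and using $(\mathcal D(n_{h,k})\nabla n_{h,k},\nabla n_{h,k})\ge0$ gives uniform $L^2(0,T;H^1(\Omega))$ control of $n_{h,k}$; testing with $\partial_t n_{h,k}\ge0$ and, after a careful treatment of the nonlinear mass-lumped pressure term, building a discrete energy, upgrades this to uniform $L^\infty(0,T;H^1(\Omega))$ control of $n_{h,k}$ and $\mathcal I_h((n_{h,k})^k)$ together with $\partial_t n_{h,k}\in L^2(0,T;L^2(\Omega))$. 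Rewriting \eqref{FEM} through the discrete Laplacian \eqref{Discrete-Laplacian-P1} as $\partial_t n_{h,k}-\widetilde\Delta_h\mathcal I_h((n_{h,k})^k)-\nu\widetilde\Delta_h n_{h,k}=G(p(n_{h,k}))n_{h,k}$ (nodally) then bounds $\|\widetilde\Delta_h(\cdot)\|_{L^2(0,T;L^2(\Omega))}$. Throughout, the hypothesis (H5) $kh\to0$ is what lets the interpolation and consistency errors -- estimated by \eqref{error-L1} and \eqref{Equivalence-L2} but carrying powers of $k$ inherited from $\nabla\mathcal I_h((n_{h,k})^k)=\mathcal D(n_{h,k})\nabla n_{h,k}$ with $\mathcal D=O(k)$ -- be absorbed. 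I expect this stage to be the main obstacle: the nonlinear mass-lumped pressure term resists a clean discrete chain rule, so obtaining the $L^\infty(0,T;H^1)$ bound on $\mathcal I_h((n_{h,k})^k)$ uniformly in $(h,k)$, while simultaneously tracking the $O(k)$ coefficient and leaning on (H5), is the delicate point.

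Finally, these estimates furnish weak-$\star$ limits in $L^\infty(0,T;H^1(\Omega))$ for $n_{h,k}$ and $\mathcal I_h((n_{h,k})^k)$ and meet the hypotheses of Theorem \ref{th:L2H1-strong}, producing strong $L^2(0,T;H^1(\Omega))$ convergence along a subsequence to limits $n_\infty,p_\infty$; the uniform $L^\infty$ bound upgrades this to $L^p((0,T)\times\Omega)$-strong for every $p<\infty$. I would then pass to the limit termwise in \eqref{FEM}, replacing $(\cdot,\cdot)_h$ by $(\cdot,\cdot)$ and $\mathcal I_h((n_{h,k})^k)$ by $p_\infty$ through \eqref{Equivalence-L2}, \eqref{error-L1} and (H5), thereby recovering \eqref{Hele-Shaw} with the Neumann conditions inherited from the weak form, and $\partial_t n_\infty\ge0$ passes from Stage 2. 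The complementarity relation \eqref{Complemetary-Hele-Shaw} is then obtained from the saturation identity $p_\infty(1-n_\infty)=0$ (the limit of the equivalence $p\le P_{\rm max}\Leftrightarrow n\le N_{\rm max}(k)$ with $N_{\rm max}(k)\to1$): where $p_\infty>0$ one has $n_\infty\equiv1$, so $\partial_t n_\infty=\nu\Delta n_\infty=0$ and \eqref{Hele-Shaw} collapses to $\Delta p_\infty+G(p_\infty)=0$. Rigorously justifying this last identity, which requires the strong convergence of $p_\infty$ together with the saturation relation, is the secondary difficulty.
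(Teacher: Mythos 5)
Your overall skeleton (nodal maximum principle, positivity of $\partial_t n_{h,k}$, uniform $H^1$ bounds, compactness, complementarity) matches the paper, and Stages 1--2 are acceptable variants of its arguments: the paper proves the maximum principle by testing a \emph{truncated} scheme with $\mathcal{I}_h(n_{h,k}^-)$ and applying Gr\"onwall rather than your nodal $M$-matrix barrier, and it propagates positivity of the time derivative through $w_{h,k}=\partial_t\Sigma(n_{h,k})$ with $\Sigma(n_{h,k})=\mathcal{I}_h((n_{h,k})^k)+\nu n_{h,k}$ rather than $\partial_t n_{h,k}$ itself; both rest on \eqref{off-diagonal}. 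Note, though, that your Stage 1 is circular as stated: the nonnegativity of $\mathcal{D}(n_{h,k})$ presupposes $n_{h,k}\ge0$, which is what you are proving. The paper resolves this by replacing $\mathcal{D}(n_{h,k})$ with $\mathcal{D}([n_{h,k}]_T)$ (truncation between $0$ and $N_{\rm max}(k)$), proving the bounds for the truncated scheme, and then identifying the two schemes by uniqueness.

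The genuine gap is Stage 3. You propose testing with $\partial_t n_{h,k}$ and building a discrete energy, and you correctly flag that the mass-lumped $k$-th power term admits no clean discrete chain rule --- but the resolution is not to fight that difficulty. The paper's key observation is that once $n_{h,k}(\a,t)\ge0$ and $\partial_t n_{h,k}(\a,t)\ge0$ are known at the nodes, the products $(\partial_t n_{h,k},n_{h,k})_h$ and $(\partial_t n_{h,k},\mathcal{I}_h((n_{h,k})^k))_h$ are automatically nonnegative; testing \eqref{FEM} with $\overline n_h=n_{h,k}$ and with $\overline n_h=\mathcal{I}_h((n_{h,k})^k)$ and simply \emph{discarding} these terms yields the pointwise-in-time bounds $\nu\|\nabla n_{h,k}\|^2\le G(0)\|n_{h,k}\|_h^2$ and $\|\nabla\mathcal{I}_h((n_{h,k})^k)\|^2\le G(0)P_{\rm max}\|n_{h,k}\|_h^2$, i.e.\ the uniform $L^\infty(0,T;H^1)$ estimates, with no energy functional and no $k$-dependent constants. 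Your route would also not deliver a uniform $L^2(0,T;L^2)$ bound on $\partial_t n_{h,k}$; the paper only obtains (and only needs) $\partial_t n_{h,k}\in L^\infty(0,T;L^1)$ and $\partial_t(n_{h,k})^k\in L^1(0,T;L^1)$, from testing with $\overline n_h=1$ and positivity. A second, smaller gap is your treatment of the complementarity relation: the formal argument ``on $\{p_\infty>0\}$ one has $n_\infty\equiv1$, hence $\partial_t n_\infty=\nu\Delta n_\infty=0$'' does not survive scrutiny, since $\{p_\infty>0\}$ is merely measurable and $\Delta p_\infty$ is only a distribution. The paper instead proves the two one-sided variational inequalities \eqref{eq:compl-positive}--\eqref{eq:compl-negative} by testing the time-mollified equation against $\mathcal{Q}_h(\mathcal{I}_h(n_{h,k}^k)\psi)$, exploiting $(\partial_t n_{h,k}*\rho_\varepsilon,\mathcal{Q}_h(\mathcal{I}_h(n_{h,k}^k)\psi))_h\ge0$ for one direction and a one-sided choice of ${\rm supp}(\rho_\varepsilon)$ together with the monotonicity of $(n_{h,k})^k$ for the other; you would also need the dedicated lemma (using (H5) and a piecewise-constant comparison) giving strong convergence of $p(n_{h,k})$ itself, not just of $\mathcal{I}_h((n_{h,k})^k)$, to handle the reaction term and to derive $p_\infty n_\infty=p_\infty$.
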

\section{Proof of Theorem \ref{Th:Main}}
\subsection{A priori energy estimates}
Our goal is to prove a priori energy estimates for the discrete solution $n_{h,k}$ of \eqref{FEM} independent of $(h,k)$. 

This first lemma will be focused on proving a discrete maximum principle for $n_{h,k}$ based on the hypothesis of right-angled triangulations. Moreover, some a priori energy estimates are obtained. 
\begin{lemma} Assume that {\rm (H1)-(H3)} hold. Then the  solution $n_{h,k}$ of  scheme \eqref{FEM} satisfies
\begin{equation}\label{lm1:DMP-n}
0\le n_{h,k}(\a ,t)\le N_{\rm max}(k)\quad \forall\, \a\in {\mathcal N}_h\quad\mbox{ and }\quad \forall\, t\ge 0,
\end{equation}
and
\begin{equation}\label{lm1:Energy-FEM}
\|n_{h,k}\|_{L^\infty(0,T; L^2(\Omega))}+\|n_{h,k}\|_{L^2(0,T; H^1(\Omega))}\le C, 
\end{equation}
where $C>0$ is independent of $(h,k)$. 
\end{lemma}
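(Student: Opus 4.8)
The plan is to reduce \eqref{FEM-b} to a nodal system of ordinary differential equations, exhibit an $M$-matrix structure for its spatial part (this is where the right-angle hypothesis enters), and then combine a barrier argument for \eqref{lm1:DMP-n} with a Gronwall estimate for \eqref{lm1:Energy-FEM}. First I would test \eqref{FEM-b} with the Lagrange basis function $\varphi_\a$. Since the first and last terms use the mass-lumped product and $(\varphi_{\widetilde\a},\varphi_\a)_h=\delta_{\a\widetilde\a}\int_\Omega\varphi_\a$, this yields for each node $\a$ a scalar equation
$$
m_\a\,\partial_t n_{h,k}(\a)+\sum_{\widetilde\a\in\mathcal N_h}s_{\a\widetilde\a}\,n_{h,k}(\widetilde\a)=m_\a\,G(p(n_{h,k}(\a)))\,n_{h,k}(\a),
$$
where $m_\a=\int_\Omega\varphi_\a>0$ and $s_{\a\widetilde\a}=\big((\mathcal D(n_{h,k})+\nu I_d)\nabla\varphi_{\widetilde\a},\nabla\varphi_\a\big)$. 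By \eqref{Descrite_Derivative}--\eqref{ident-D} the matrix $\mathcal D(n_{h,k})+\nu I_d$ is diagonal with nonnegative entries on each $K$ whenever $n_{h,k}\ge0$ there, so the property \eqref{off-diagonal} gives $s_{\a\widetilde\a}\le0$ for $\a\ne\widetilde\a$; and since $\sum_{\widetilde\a}\varphi_{\widetilde\a}\equiv1$ forces $\sum_{\widetilde\a}\nabla\varphi_{\widetilde\a}=0$, the row sums vanish, whence $s_{\a\a}=-\sum_{\widetilde\a\ne\a}s_{\a\widetilde\a}\ge0$. This is exactly the $M$-matrix structure behind the discrete maximum principle.

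For \eqref{lm1:DMP-n} I would run a nodal barrier argument. Writing $\phi(t)=\min_{\a}n_{h,k}(\a,t)$ and evaluating the nodal equation at a minimizing node $\a_*$, the identity $s_{\a_*\a_*}=-\sum_{\widetilde\a\ne\a_*}s_{\a_*\widetilde\a}$ rewrites the spatial part as $\sum_{\widetilde\a\ne\a_*}s_{\a_*\widetilde\a}\big(n_{h,k}(\a_*)-n_{h,k}(\widetilde\a)\big)$, each summand being a product of a nonpositive weight and a nonpositive difference, hence $\ge0$; this gives $\dot\phi\ge G(p(\phi))\phi$, whose right-hand side vanishes at $\phi=0$, so $\phi(0)\ge0$ (from \eqref{bound-n0}) forces $\phi\ge0$ for all $t$. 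Symmetrically, with $\psi(t)=\max_{\a}n_{h,k}(\a,t)$ I obtain $\dot\psi\le G(p(\psi))\psi$; since $p(N_{\rm max}(k))=P_{\rm max}$ and $G(P_{\rm max})=0$ by \eqref{prop:G}, the constant $N_{\rm max}(k)$ is a supersolution, so $\psi(0)\le N_{\rm max}(k)$ gives $\psi\le N_{\rm max}(k)$. Because the sign of the diagonal entries of $\mathcal D(n_{h,k})$ is itself tied to $n_{h,k}\ge0$, these two steps are intertwined, and I would close them by a continuation argument on the maximal interval on which $0\le n_{h,k}\le N_{\rm max}(k)$ holds, nonempty at $t=0$ and both open and closed by the above.

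Finally, for \eqref{lm1:Energy-FEM} I would test \eqref{FEM-b} with $\overline n_h=n_{h,k}$. The time term equals $\tfrac12\frac{d}{dt}\|n_{h,k}\|_h^2$; dropping the nonnegative term $(\mathcal D(n_{h,k})\nabla n_{h,k},\nabla n_{h,k})\ge0$ leaves the coercive contribution $\nu\|\nabla n_{h,k}\|^2$; and using $0\le n_{h,k}\le N_0$ together with $0\le G(p(n_{h,k}))\le G(0)$ (from \eqref{prop:G} and the monotonicity of $G$) bounds the reaction term by $G(0)\|n_{h,k}\|_h^2$. This yields
$$
\tfrac12\frac{d}{dt}\|n_{h,k}\|_h^2+\nu\|\nabla n_{h,k}\|^2\le G(0)\|n_{h,k}\|_h^2.
$$
Since $\|n_{h,k}^0\|_h^2=\sum_{\a}n_{h,k}^0(\a)^2m_\a\le N_0^2|\Omega|$ is bounded independently of $(h,k)$, Gronwall's lemma and the equivalence \eqref{Equivalence-L2} give the $L^\infty(0,T;L^2(\Omega))$ bound, and integrating in time then gives the $L^2(0,T;H^1(\Omega))$ bound. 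I expect the main obstacle to be the discrete maximum principle, namely exploiting the right-angle hypothesis through \eqref{off-diagonal} and handling the mutual dependence between $n_{h,k}\ge0$ and the nonnegativity of $\mathcal D(n_{h,k})$; once \eqref{lm1:DMP-n} is secured, the energy estimate is a routine Gronwall argument.
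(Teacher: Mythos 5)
Your $M$-matrix reduction, the nodal inequalities at the extremal nodes, and the final Gronwall step are all sound, and your energy estimate is exactly the paper's argument. The gap is in the phrase ``both open and closed by the above''. Closedness of the set $\{t:\ 0\le n_{h,k}(\a,s)\le N_{\rm max}(k)\ \forall s\in[0,t],\ \forall\a\in\mathcal{N}_h\}$ is immediate from continuity, but openness is precisely the hard part, and nothing in the preceding argument delivers it. The differential inequalities $\dot\phi\ge G(p(\phi))\phi$ and $\dot\psi\le G(p(\psi))\psi$ are derived only under the hypothesis $0\le n_{h,k}\le N_{\rm max}(k)$: for even $k$ the difference quotient in \eqref{Descrite_Derivative} can be negative when nodal values are negative (for $k=2$ it equals $n_{h,k}(\a_i)+n_{h,k}(\a_0)$), the sign condition $s_{\a\widetilde\a}\le 0$ then fails, and $p(\cdot)$ and $G(p(\cdot))$ are not even defined for negative arguments. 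At a time $t_*$ where the minimum touches $0$ at a node $\a_*$, the nodal equation gives only the non-strict information $\partial_t n_{h,k}(\a_*,t_*)\ge 0$, which does not prevent the trajectory from leaving the box immediately afterwards (compare $y(t)=-(t-t_*)^2$, with $y(t_*)=\dot y(t_*)=0$); and once a nodal value is negative you can no longer re-derive the inequality to continue. So the invariance of the box $[0,N_{\rm max}(k)]^{\mathcal{N}_h}$ does not follow from a soft open--closed argument; it requires either Nagumo's sub-tangent (invariant region) theorem for the nodal ODE system, a strict perturbation of the barrier, or a decoupling of the circular dependence between $n_{h,k}\ge 0$ and the nonnegativity of $\mathcal{D}(n_{h,k})$.

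The paper breaks exactly this circularity with a truncation device, which is also the cleanest repair of your proof. It first studies the modified scheme \eqref{lm3.1-FEM}, in which $\mathcal{D}$ and $G\circ p$ are evaluated at $[n_{h,k}]_T$, the truncation of $n_{h,k}$ to $[0,N_{\rm max}(k)]$, so that the off-diagonal sign structure \eqref{off-diagonal} and the bound $0\le G(p([n_{h,k}]_T))\le G(0)$ hold unconditionally. The maximum principle for the truncated scheme is then proved not nodewise but variationally, testing with the nodal negative part $n_{h,k}^{\rm min}=\mathcal{I}_h(n_{h,k}^{-})$ (respectively with $(n_{h,k}-N_{\rm max}(k))^{\rm max}$ for the upper bound, where the source vanishes because $G(P_{\rm max})=0$) and applying Gronwall to $\|n_{h,k}^{\rm min}\|_h^2$, which starts from zero. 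Finally, any solution of the truncated scheme satisfies $[n_{h,k}]_T\equiv n_{h,k}$, hence solves \eqref{FEM}, and coincides with the solution of \eqref{FEM} by uniqueness. If you graft this truncation onto your nodal system, or replace ``open and closed'' by an appeal to an invariant-region theorem, your argument closes; as written, the key step is asserted rather than proved.
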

\begin{proof} We first proceed to verify \eqref{lm1:DMP-n}. In doing so, we introduce a modification to scheme \eqref{FEM-b} which truncates the nonlinear diffusion term as follows:
\begin{equation}\label{lm3.1-FEM}
(\partial_t n_{h,k}, \overline n_h)_h + (\mathcal{D}([n_{h,k}]_T) \nabla n_{h,k}, \nabla \overline n_h )+\nu (\nabla n_{h,k},  \nabla \overline n_h)  
= (G(p([n_{h,k}]_T) ) n_{h,k}, \overline n_h)_h,
\end{equation}
where 
$[n_{h,k}]_T$ is the usual truncation of $n_{h,k}$ from below by $0$ and from above by $N_{max}(k)$. Again, by means of Picard's theorem, one has the existence and uniqueness of a solution $n_{h,k}$ to \eqref{lm3.1-FEM}.
 
Let $n_{h,k}^{\rm min}=\mathcal{I}_h(n_{h,k}^{-})\in N_h$ be defined as 
$$
n_{h,k}^{\rm min}=\sum_{\a\in\mathcal{N}_h} n_{h,k}^{-}(\a)\varphi_\a,
$$
where $n_{h,k}^{-}(\a)=\min\{0, n_{h,k}(\a)\}$. Analogously, one defines $n^{\rm max}_{h,k}
=\mathcal{I}_h(n_{h,k}^{+})\in N_h$ as
$$
n_{h,k}^{\rm max}=\sum_{\a\in\mathcal{N}_h} n_{h,k}^{+}(\a)\varphi_\a,$$
where $n_{h,k}^{+}(\a)=\max\{0, n_{h,k}(\a)\}$. Notice that $n_{h,k}=n_{h,k}^{\rm min}+ n_{h,k}^{\rm max}$.

On choosing $\overline n_h=n_{h,k}^{\rm min}$ in \eqref{lm3.1-FEM}, it follows that  
\begin{equation}\label{lm3.1-lab1}
\begin{array}{rcl}
\displaystyle
\frac12\frac{d}{dt}\|n_{h,k}^{\rm min}\|^2_h+(\mathcal{D}([n_{h,k}]_T)\nabla n_{h,k}, \nabla n_{h,k}^{\rm min})+\nu(\nabla n_{h,k}, \nabla n_{h,k}^{\rm min})
&=&\|G(p([n_{h,k}]_T))^{1/2} n_{h,k}^{\rm min}\|^2_h
\\
&\le& G(0) \| n_{h,k}^{\rm min}\|^2_h. 
\end{array}
\end{equation}
Next observe that 
$$
\begin{array}{rcl}
(\mathcal{D}([n_{h,k}]_T)\nabla n_{h,k}, \nabla n_{h,k}^{\rm min})&=&
(\mathcal{D}([n_{h,k}]_T)\nabla n_{h,k}^{\rm min}, \nabla n_{h,k}^{\rm min})
+(\mathcal{D}([n_{h,k}]_T)\nabla n_{h,k}^{\rm max}, \nabla n_{h,k}^{\rm min})
\\
\\
&=&
\displaystyle
\| \mathcal{D}([n_{h,k}]_T)^{1/2} \nabla n_{h,k}^{\rm min}\|^2
+ \sum_{\a\not=\tilde \a\in\mathcal{N}_h} n_{h,k}^{-}(\a) n_{h,k}^{+}(\widetilde\a) (\mathcal{D}([n_{h,k}]_T)\nabla\varphi_\a, \nabla\varphi_{\widetilde {\boldsymbol{a}}}).
\end{array}
$$
Then, using the fact that $n_{h,k}^{-}(\a) n_{h,k}^{+}(\widetilde\a)\le 0$ if $\a\not=\tilde \a$ and that $\mathcal{D}([n_{h,k}]_T)$ is a nonnegative diagonal matrix function, one deduces, from \eqref{off-diagonal}, that 
$$
\mathcal{D}([n_{h,k}]_T) \nabla \varphi_\a \cdot\nabla\varphi_{\widetilde \a} \le 0\quad \forall\, \a\not=\widetilde\a\in\mathcal{N}_{h}
$$
and thereby 
\begin{equation}\label{lm3.1-lab2}
(\mathcal{D}([n_{h,k}]_T)\nabla n_{h,k}, \nabla n_{h,k}^{\rm min})\ge
\|\mathcal{D}([n_{h,k}]_T)^{1/2} \nabla n_{h,k}^{\rm min}\|^2.
\end{equation}
Analogously, one obtains  
\begin{equation}\label{lm3.1-lab3}
\nu (\nabla n_{h,k}, \nabla n_{h,k}^{\rm min})\ge \nu \|\nabla n_{h,k}^{\rm min}\|^2,
\end{equation}
where we have used again \eqref{off-diagonal} but now for $\mathcal{D}=I_d$, with $I_d$ being the $d\times d$ unit matrix.  Inserting \eqref{lm3.1-lab2} and \eqref{lm3.1-lab3} into \eqref{lm3.1-lab1} yields 
$$
\frac12\frac{d}{dt}\| n_{h,k}^{\rm min}\|^2_h+\| \mathcal{D}([n_{h,k}]_T)^{1/2} \nabla n_{h,k}^{\rm min}\|^2+\nu \|\nabla n_{h,k}^{\rm min}\|^2
\le G(0) \| n_{h,k}^{\rm min}\|^2_h. 
$$
By Grönwall's lemma, we have $n_{h,k}^{\rm min}(t)\equiv 0$ in $\Omega$, for any $t\ge 0$, since $n_{h,k}^{\rm min}(0) \equiv 0$ in $\Omega$; thereby this implies $0\le n_{h,k}$ in \eqref{lm1:DMP-n}. For the other inequality $n_{h,k}\le N_{\rm max}(k)$ in \eqref{lm1:DMP-n}, we proceed in a similar fashion. In this case, one chooses $\overline n_h=(n_{h,k}-N_{\rm max}(k))^{\rm max}$  in \eqref{lm3.1-FEM} and takes into account  that $G(p([n_{h,k}]_T)) n_{h,k} (n_{h,k}-N_{\rm max}(k))^{\rm max}\equiv 0$ due to  $p([n_{h,k}]_T)= P_{\rm max}$ if $n_{h,k}\ge N_{max}(k)$. 

It should be noted that any solution $n_{h,k}$ of the modified scheme  \eqref{lm3.1-FEM} satisfies the discrete maximum principle \eqref{lm1:DMP-n}, and consequently $[n_{h,k}]_T\equiv n_{h,k}$; hence $n_{h,k}$ satisfies the non-truncated scheme \eqref{FEM} as well. Finally, by uniqueness of solutions for scheme \eqref{FEM}, the solution of  \eqref{FEM} takes values between $0$ and $N_{max}(k)$; that is \eqref{lm1:DMP-n}.

Now selecting $\overline n_h= n_{h,k}$ in \eqref{FEM-b} and invoking Grönwall's lemma, the following energy estimate holds,  for all $t\in [0,T]$:
\begin{equation}\label{lm1-lab2}
\frac12\|n_{h,k}(t)\|^2_h+  \int_0^T\| \mathcal{D}(n_{h,k})^{1/2} \nabla n_{h,k} \|^2+\nu\int_0^T\|\nabla n_{h,k}\|^2\le \exp(2G(0)T) \frac12 \| n_{h,k}^0\|_h^2.
\end{equation}
Then  the weak estimates \eqref{lm1:Energy-FEM} are deduced from \eqref{lm1-lab2} and \eqref{Equivalence-L2}. 
\end{proof}

A discrete maximum principle for $(n_{h,k})^{k-1}$ and $(n_{h,k})^k$ follows as a direct consequence of \eqref{lm1:DMP-n}.
\begin{corollary} There holds  
\begin{equation}\label{lm1:DMP-nk-1}
0\le (n_{h,k})^{k-1}(\a ,t)\le P_{\rm max} \quad \forall\, \a\in {\mathcal N}_h\quad\mbox{ and }\quad \forall\, t\ge 0.
\end{equation}
and
\begin{equation}\label{lm1:DMP-nk}
0\le (n_{h,k})^k(\a ,t)\le P_{\rm max} N_{\rm max}(k)\quad \forall\, \a\in {\mathcal N}_h\quad\mbox{ and }\quad \forall\, t\ge 0.
\end{equation}
\end{corollary}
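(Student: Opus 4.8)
The plan is to deduce both bounds directly from the nodal bound $0\le n_{h,k}(\a,t)\le N_{\rm max}(k)$ established in \eqref{lm1:DMP-n}, using only the monotonicity of the real power functions $x\mapsto x^{k-1}$ and $x\mapsto x^{k}$ on $[0,\infty)$. The essential observation is that, because $n_{h,k}(\cdot,t)\in N_h$ is a Lagrange interpolant, its value at a node $\a$ is simply the scalar $n_{h,k}(\a,t)$, so that $(n_{h,k})^{k-1}(\a,t)=(n_{h,k}(\a,t))^{k-1}$ and $(n_{h,k})^{k}(\a,t)=(n_{h,k}(\a,t))^{k}$; the corollary is a statement about these nodal scalars, not a pointwise statement on all of $\Omega$, and this is what makes the argument elementary.

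First I would treat \eqref{lm1:DMP-nk-1}. Since $0\le n_{h,k}(\a,t)\le N_{\rm max}(k)$ and $x\mapsto x^{k-1}$ is nondecreasing on $[0,\infty)$ for $k\ge 2$, raising to the power $k-1$ preserves the inequalities, giving $0\le (n_{h,k}(\a,t))^{k-1}\le (N_{\rm max}(k))^{k-1}$. It then remains to evaluate the right-hand side. From the definitions $N_{\rm max}(k)=n(P_{\rm max})$ and \eqref{def_n(p)} one has $N_{\rm max}(k)=\bigl(\tfrac{k-1}{k}P_{\rm max}\bigr)^{1/(k-1)}$, whence $(N_{\rm max}(k))^{k-1}=\tfrac{k-1}{k}P_{\rm max}\le P_{\rm max}$, because $\tfrac{k-1}{k}<1$. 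This yields \eqref{lm1:DMP-nk-1}. Equivalently, one may note $p(N_{\rm max}(k))=P_{\rm max}$ by \eqref{def_p(n)}, so the bound is just the monotone image of the nodal bound under $p$.

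For \eqref{lm1:DMP-nk} I would factor $(n_{h,k}(\a,t))^{k}=(n_{h,k}(\a,t))^{k-1}\,n_{h,k}(\a,t)$ and multiply the two already-established nonnegative bounds: $(n_{h,k}(\a,t))^{k-1}\le P_{\rm max}$ from \eqref{lm1:DMP-nk-1} and $n_{h,k}(\a,t)\le N_{\rm max}(k)$ from \eqref{lm1:DMP-n}. Since both factors are nonnegative, the product satisfies $0\le (n_{h,k}(\a,t))^{k}\le P_{\rm max}\,N_{\rm max}(k)$, which is \eqref{lm1:DMP-nk}. Alternatively, monotonicity of $x\mapsto x^{k}$ gives $(n_{h,k}(\a,t))^{k}\le (N_{\rm max}(k))^{k}=\tfrac{k-1}{k}P_{\rm max}\,N_{\rm max}(k)$, a marginally sharper estimate.

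There is no genuine obstacle here: the result is a one-line consequence of \eqref{lm1:DMP-n} together with monotonicity of powers. The only points requiring a moment's care are the computation $(N_{\rm max}(k))^{k-1}=\tfrac{k-1}{k}P_{\rm max}$, i.e.\ correctly inverting the relation between $n$ and $p$ in \eqref{def_p(n)}--\eqref{def_n(p)}, and keeping in mind that these are bounds at the nodes only, since $(n_{h,k})^{k-1}$ and $(n_{h,k})^{k}$ are not members of $N_h$ and need not satisfy the bounds in the interior of the elements, which is precisely why the interpolant $\mathcal{I}_h((n_{h,k})^k)$ is used in the scheme \eqref{FEM}.
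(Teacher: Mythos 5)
Your proof is correct and follows exactly the paper's argument: both bounds are deduced from the nodal bound \eqref{lm1:DMP-n} via monotonicity of powers, the computation $(N_{\rm max}(k))^{k-1}=\tfrac{k-1}{k}P_{\rm max}\le P_{\rm max}$, and the factorization $N_{\rm max}(k)^{k}=N_{\rm max}(k)^{k-1}N_{\rm max}(k)$. No differences worth noting.
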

\begin{proof} Assertions \eqref{lm1:DMP-nk-1} and \eqref{lm1:DMP-nk} are satisfied in view of \eqref{lm1:DMP-n} and  the bounds 
$$n^{k-1}_{h,k}(\a ,t)\le N_{max}(k)^{k-1} = \dfrac{k-1}{k} P_{max} \le P_{max} $$
and
$$n^k_{h,k}(\a ,t)\le N_{max}(k)^k = N_{max}(k)^{k-1} N_{max}(k) \le P_{max} N_{max}(k).$$
\end{proof}
\

The following lemma provides the positivity and some a priori estimates for the time derivative of $n_{h,k}$ and $(n_{h,k})^k$.

\begin{lemma} Suppose that {\rm (H1)-(H4)} hold. Then it follows that 
\begin{equation}\label{lm3.2:Positivity-derivative}
\partial_t n_{h,k}(\a,t)\ge 0, \quad \partial_t (n_{h,k}(\a,t))^k\ge 0
\quad\forall\,\a\in\mathcal{N}_h\quad\mbox{ and }\quad \forall\,t\in[0,T],
\end{equation}
and the a priori estimates
\begin{equation}\label{lm3.2:bound-derivative-n}
\|\partial_t n_{h,k}\|_{L^\infty(0,T; L^1(\Omega))}\le C,
\end{equation}
\begin{equation}\label{lm3.2:bound-derivative-nk}
\|\partial_t (n_{h,k})^k\|_{L^1(0,T; L^1(\Omega))}\le C,
\end{equation}
where $C>0$ is a constant independent of $(h,k)$.
%
\end{lemma}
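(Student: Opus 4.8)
The plan is to establish the positivity of the time derivatives first, and then to derive the $L^1$ a priori bounds as consequences, since the positivity essentially linearizes the computation of the $L^1$ norms.

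For the positivity, the key idea is to differentiate the scheme \eqref{FEM} in time and to exploit hypothesis (H4), which guarantees $\partial_t n_{h,k}(\cdot,0)\ge 0$ at every node. First I would observe that, because $n_{h,k}\in C^1([0,T];N_h)$, the quantity $w_{h,k}:=\partial_t n_{h,k}\in N_h$ is well defined and satisfies the equation obtained by differentiating $\eqref{FEM}_1$ in time. Testing that differentiated equation with $w_{h,k}^{\rm min}=\mathcal{I}_h(w_{h,k}^-)$ (the negative part, built nodewise as in the proof of the previous lemma) is the natural move. The diffusion and $\nu$-Laplacian terms must be handled via the off-diagonal sign property \eqref{off-diagonal}: I expect that differentiating $\mathcal{D}(n_{h,k})\nabla n_{h,k}$ produces a term $\mathcal{D}(n_{h,k})\nabla w_{h,k}$ whose contribution, after splitting $w_{h,k}=w_{h,k}^{\rm min}+w_{h,k}^{\rm max}$ and using \eqref{off-diagonal} together with $w^-(\a)w^+(\widetilde\a)\le 0$, is bounded below by $\|\mathcal{D}(n_{h,k})^{1/2}\nabla w_{h,k}^{\rm min}\|^2\ge 0$, exactly as in \eqref{lm3.1-lab2}. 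A Gr\"onwall argument on $\tfrac{d}{dt}\|w_{h,k}^{\rm min}\|_h^2$, starting from $w_{h,k}^{\rm min}(0)\equiv 0$ (this is where (H4) enters), then forces $w_{h,k}^{\rm min}\equiv 0$, i.e.\ $\partial_t n_{h,k}\ge 0$ at every node. The second positivity claim, $\partial_t(n_{h,k})^k\ge 0$, follows immediately by the chain rule since $(n_{h,k})^k$ is an increasing function of $n_{h,k}\ge 0$ and the nodal values increase.

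With positivity in hand, the $L^1$ estimates become straightforward. For \eqref{lm3.2:bound-derivative-n}, because $\partial_t n_{h,k}\ge 0$ nodewise, the $L^1$ norm of $\partial_t n_{h,k}$ is controlled up to norm equivalence by testing $\eqref{FEM}_1$ with $\overline n_h\equiv 1$ (so that $(\partial_t n_{h,k},1)_h=\int_\Omega \mathcal{I}_h(\partial_t n_{h,k})$, and the gradient terms drop since $\nabla 1=0$); this gives $\|\partial_t n_{h,k}\|_{L^1(\Omega)}\approx (\partial_t n_{h,k},1)_h=(G(p(n_{h,k}))n_{h,k},1)_h$, which is bounded uniformly by $G(0)N_0|\Omega|$ using the maximum principle \eqref{lm1:DMP-n} and the boundedness of $G$. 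Taking the essential supremum in time yields \eqref{lm3.2:bound-derivative-n}. For \eqref{lm3.2:bound-derivative-nk} I would proceed identically with the quantity $(n_{h,k})^k$: its time derivative is nonnegative, so $\int_\Omega\mathcal{I}_h(\partial_t(n_{h,k})^k)=\frac{d}{dt}\int_\Omega\mathcal{I}_h((n_{h,k})^k)$, and integrating in time telescopes to the difference of the (nonnegative, uniformly bounded by \eqref{lm1:DMP-nk}) nodal integrals at $t$ and $0$, giving the $L^1(0,T;L^1(\Omega))$ bound.

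The main obstacle I anticipate is the rigorous justification of differentiating the fully nonlinear diffusion term $(\nabla \mathcal{I}_h((n_{h,k})^k),\nabla\overline n_h)$ in time and controlling the resulting lower-order contributions, since $\partial_t\mathcal{D}(n_{h,k})$ (or equivalently $\partial_t$ of the nodal values $(n_{h,k})^k$) appears and one must verify that, after the test-function choice, every cross term either vanishes or carries the correct sign under \eqref{off-diagonal}. The sign bookkeeping when splitting into positive and negative nodal parts — ensuring the reaction term on the right does not destroy the Gr\"onwall structure — is the delicate point; the factor $G'(p)\le 0$ together with $\partial_t p(n_{h,k})\ge 0$ should be arranged so that the differentiated reaction term has a favorable sign or is absorbable, and confirming this is the crux of the argument.
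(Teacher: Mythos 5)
Your overall skeleton is the right one (differentiate in time, test with the nodal negative part, run Gr\"onwall from $w^{\rm min}(0)\equiv 0$ supplied by (H4)), and your treatment of the two $L^1$ bounds is exactly the paper's: test with $\overline n_h\equiv 1$, use positivity to identify the $L^1$ norm with the lumped integral, and telescope in time for $(n_{h,k})^k$. But the step you yourself flag as the ``crux'' is a genuine gap, and it does not resolve the way you expect. Differentiating the diffusion term does \emph{not} produce $\mathcal{D}(n_{h,k})\nabla\partial_t n_{h,k}$ as principal part: since $\mathcal{D}(n_{h,k})\nabla n_{h,k}=\nabla\mathcal{I}_h((n_{h,k})^k)$, its time derivative is $\nabla\mathcal{I}_h\bigl(k\,(n_{h,k})^{k-1}\partial_t n_{h,k}\bigr)$, the gradient of a nodal interpolant of a product. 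Testing this against $(\partial_t n_{h,k})^{\rm min}$ yields $\sum_{\a,\widetilde\a}k\,(n_{h,k})^{k-1}(\a)\,w(\a)\,w^-(\widetilde\a)\,(\nabla\varphi_\a,\nabla\varphi_{\widetilde\a})$; the mixed-sign cross terms are fine by \eqref{off-diagonal}, but the same-sign block is a quadratic form $x^{T}\,{\rm diag}(c)\,A\,x$ with a \emph{non-symmetric} weighting of the stiffness matrix, which is not nonnegative in general. So the lower bound ``$\ge\|\mathcal{D}^{1/2}\nabla w^{\rm min}\|^2$ exactly as in \eqref{lm3.1-lab2}'' is not available for the differentiated equation, and the argument stalls precisely where you anticipated.

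The paper's proof supplies the missing device: a discrete Kirchhoff-type change of unknown. Set $\Sigma(n_{h,k})=\mathcal{I}_h((n_{h,k})^k)+\nu n_{h,k}$ and $\Sigma'(n_{h,k})=k\,\mathcal{I}_h((n_{h,k})^{k-1})+\nu\ge\nu>0$. One first rewrites the scheme with the lumped discrete Laplacian as $(\partial_t n_{h,k},\overline n_h)_h-(\widetilde\Delta_h\Sigma(n_{h,k}),\overline n_h)_h=(G(p(n_{h,k}))n_{h,k},\overline n_h)_h$, tests with $\overline n_h=\mathcal{I}_h(\Sigma'(n_{h,k})\overline w_h)$ \emph{before} differentiating, and only then differentiates in time. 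With $w_{h,k}:=\partial_t\Sigma(n_{h,k})=\Sigma'(n_{h,k})\partial_t n_{h,k}$ (nodewise), the principal part becomes the linear operator $-\Sigma'(n_{h,k})\widetilde\Delta_h$ applied to $w_{h,k}$ itself --- no commutator with $\partial_t\mathcal{D}$ survives --- and the sign of $-(\Sigma'(n_{h,k})\widetilde\Delta_h w_{h,k},w_{h,k}^{\rm min})_h$ is obtained from \eqref{off-diagonal}. All the nonlinear by-products (including the differentiated reaction term) are collected into a zeroth-order coefficient $F(n_{h,k})$ multiplying $w_{h,k}$; crucially, no sign condition such as $G'\le 0$ is needed there, because an $(h,k)$-dependent $L^\infty$ bound on $F$ (automatic from finite-dimensionality of $N_h$) is enough for the purely qualitative Gr\"onwall conclusion $w_{h,k}^{\rm min}\equiv 0$. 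Since $\Sigma'(n_{h,k})\ge\nu>0$ at every node, $w_{h,k}\ge 0$ is equivalent to $\partial_t n_{h,k}\ge 0$, and the rest of your argument then goes through.
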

\begin{proof} Let us define $\Sigma(n_{h,k})\in N_h$ such that 
$$\Sigma(n_{h,k})=\mathcal{I}_h((n_{h,k})^k)+\nu\,n_{h,k} = \mathcal{I}_h((n_{h,k})^k+\nu\,n_{h,k}).
$$
Moreover, let $\Sigma'(n_{h,k})\in N_h$ and $\Sigma''(n_{h,k})\in N_h$ be defined as 
$$
\Sigma'(n_{h,k})=k\,\mathcal{I}_h((n_{h,k})^{k-1} ) +\nu \quad\hbox{and}\quad
\Sigma''(n_{h,k})=k(k-1)\ \mathcal{I}_h((n_{h,k})^{k-2}).
$$
Then scheme \eqref{FEM} can be rewritten as
$$
(\partial_t n_{h,k}, \overline n_h)_h+(\nabla \Sigma(n_{h,k}), \nabla\overline n_h)=(G(p(n_{h,k})) n_{h,k}, \overline n_h)_h,
$$
and equivalently, from \eqref{Discrete-Laplacian-P1}, as 
\begin{equation}\label{lm2-lab1}
(\partial_t n_{h,k}, \overline n_h)_h-(\widetilde\Delta_h \Sigma(n_{h,k}),\overline n_h)_h=(G(p(n_{h,k})) n_{h,k}, \overline n_h)_h.
\end{equation}
Now take $\overline n_h=\mathcal{I}_h(\Sigma'(n_{h,k}) \overline w_h)$, for any  $\overline w_h\in N_h$ to get
$$
(\partial_t \Sigma(n_{h,k}), \overline w_h)_h-(\Sigma'(n_{h,k})\widetilde\Delta_h \Sigma(n_{h,k}),\overline w_h)_h=(\Sigma'(n_{h,k}) G(p(n_{h,k})) n_{h,k}, \overline w_h)_h.
$$
Differentiating with respect to time and defining $w_{h,k}\in N_h$ such that, for each $\boldsymbol{a}\in\mathcal{N}_h$ and $t\in[0,T]$, 
$$w_{h,k}(\boldsymbol{a},t):=\partial_t\Sigma(n_{h,k})(\boldsymbol{a},t) 
=\Sigma'(n_{h,k})(\boldsymbol{a},t) \partial_t n_{h,k}(\boldsymbol{a},t), 
$$ 
one arrives at 
\begin{eqnarray*}
&&(\partial_t w_{h,k}, \overline w_h)_h-(\Sigma'(n_{h,k})  \widetilde\Delta_h w_{h,k}, \overline w_h)_h=(\Sigma''(n_{h,k}) \partial_t n_{h,k} \widetilde\Delta_h \Sigma(n_{h,k}), \overline w_h )_h
\\
&&+(\Sigma''(n_{h,k})\partial_t n_{h,k} G(p(n_{h,k}))n_{h,k}, \overline w_h)_h
+ k\, (\Sigma'(n_{h,k}) G'(p(n_{h,k})) (n_{h,k})^{k-1} \partial_t n_{h,k} , \overline w_h)_h
\\
&&+(\Sigma'(n_{h,k})  G(p(n_{h,k})) \partial_t n_{h,k}, \overline w_h)_h,
\end{eqnarray*}
for any $\overline w_h \in N_h$. 
Since $w_{h,k}(\boldsymbol{a},t)=\Sigma'(n_{h,k})(\boldsymbol{a},t) \partial_t n_{h,k}(\boldsymbol{a},t) $ and $\Sigma'(n_{h,k})(\boldsymbol{a},t)\ge \nu>0$, we have
$$
\partial_t n_{h,k}(\boldsymbol{a},t) = \frac{w_{h,k}(\boldsymbol{a},t) }{ \Sigma'(n_{h,k})(\boldsymbol{a},t)} \quad\forall\,\boldsymbol{a}\in\mathcal{N}_h\ \forall\,t\in[0,T].
$$
Both previous equalities  yield 
$$
(\partial_t w_{h,k}, \overline w_h)_h-(\Sigma'(n_{h,k}) \widetilde\Delta_h w_{h,k}, \overline w_h)_h=(F(n_{h,k}) w_{h,k}, \overline  w_h)_h,
$$
for any $\overline w_h \in N_h$, 
where 
$$
F(n_{h,k}):=\frac{\Sigma''(n_{h,k})}{\Sigma'(n_{h,k})}\Big\{\widetilde\Delta_h\Sigma(n_{h,k})+n_{h,k} G(p(n_{h,k})) \Big\}+k (n_{h,k})^{k-1} G'(p(n_{h,k})) +G(p(n_{h,k})) .
$$
Taking $\overline w_{h}=w_{h,k}^{\rm min}=\mathcal{I}_h(w_{h,k}^-)$ in the above variational formulation, we get  
\begin{equation} \label{ineq-a}
\frac12\frac{d}{dt}\|w_{h,k}^{\rm min}\|^2_h-(\Sigma'(n_{h,k})\widetilde\Delta_h w_{h,k}, w_{h,k}^{\rm min})_h 
\le \|F(n_{h,k})\|_{L^\infty}  \|w_{h,k}^{\rm min}\|^2_h.
\end{equation}
Since $n_{h,k}\in C^0([0,T]; N_h)$ and $N_h$ is a finite dimensional space, we have that $\|F(n_{h,k})(t)\|_{L^\infty(\Omega)}\le C_{h,k}$ for all $t\in[0,T]$, where $C_{h,k}>0$  may depend on $h$ and $k$.    
It should also be noted that $-(\Sigma'(n_{h,k})\widetilde\Delta_h w_{h,k}, w_{h,k}^{\rm min})_h\ge 0$. Indeed, choose $\overline n_h=\varphi_\a$ in \eqref{Discrete-Laplacian-P1} to obtain
$$
-(\widetilde\Delta_h w_{h,k})(\a) \int_\Omega\varphi_\a=(\nabla w_{h,k}, \nabla\varphi_\a).
$$
Then
$$
\begin{array}{rcl}
\displaystyle
-(\Sigma'(n_{h,k})\widetilde\Delta_h w_{h,k}, w_{h,k}^{\rm min})_h&=&
\displaystyle-\sum_{\a\in\mathcal{N}_h}\Sigma'(n_{h,k}(\a))(\widetilde\Delta_h w_{h,k})(\a)w^{\rm min}_{h,k}(\a)
\int_\Omega\varphi_\a
\\
&=&\displaystyle\sum_{\a\in\mathcal{N}_h}\Sigma'(n_{h,k}(\a)) (\nabla w_{h,k}, \nabla\varphi_\a) w^{\rm min}_{h,k}(\a) 
\\
&=&\displaystyle\sum_{\a\in\mathcal{N}_h}\Sigma'(n_{h,k}(\a)) (\nabla w_{h,k}^{\rm max}, \nabla\varphi_\a) w^{\rm min}_{h,k}(\a) 
\\
&&+
\displaystyle\sum_{\a\in\mathcal{N}_h}\Sigma'(n_{h,k}(\a)) (\nabla w_{h,k}^{\rm min}, \nabla\varphi_\a) w^{\rm min}_{h,k}(\a) .
\end{array}
$$
Therefore, using the fact that  $\Sigma'(n_{h,k})\ge \nu>0$, we obtain
$$
\sum_{\a\in\mathcal{N}_h}\Sigma'(n_{h,k}(\a)) (\nabla w_{h,k}^{\rm max}, \nabla\varphi_\a) w^{\rm min}_{h,k}(\a) 
=\sum_{\a\not= \tilde\a\in\mathcal{N}_h}\Sigma'(n_{h,k}(\a)) w_{h,k}^{\rm max}(\tilde\a) w^{\rm min}_{h,k}(\a) (\nabla\varphi_{\tilde\a}, \nabla\varphi_\a)\ge0
$$
and
$$
\sum_{\a\in\mathcal{N}_h}\Sigma'(n_{h,k}(\a)) (\nabla w_{h,k}^{\rm min}, \nabla\varphi_\a) w^{\rm min}_{h,k}(\a) \ge\nu\|\nabla w^{\rm min}_{h,k}\|^2\ge0.
$$
Thus, \eqref{ineq-a} leads to
$$
\frac12\frac{d}{dt}\|w_{h,k}^{\rm min}\|^2_h
\le \|F(n_{h,k})\|_{L^\infty}  \|w_{h,k}^{\rm min}\|^2_h
$$
and hence, by Grönwall's lemma, 
$$
\|w_{h,k}^{\rm min}(t)\|^2_h\le \exp(2T \|F(n_{h,k})\|_{L^\infty }) \|w_{h,k}^{\rm min}(0)\|^2 
\quad \forall\, t\in [0,T].
$$
From \eqref{Positivity-Derivative} in $\rm (H4)$, we deduce that $w_{h,k}(\boldsymbol{a}, 0)= \Sigma'(n_{h,k})(\boldsymbol{a},0) \partial_t n_{h,k}(\boldsymbol{a},0)\ge 0$ holds; therefore $w_{h,k}^{\rm min}(t)\equiv0$ since $w_{h,k}^{\rm min}(0)\equiv0$. As a result, we have that $\partial_t n_{h,k}\ge 0$ and in particular  $\partial_t (n_{h,k})^k = k (n_{h,k})^{k-1} \partial_t n_{h,k} \ge 0$. Thus, \eqref{lm3.2:Positivity-derivative} is true.   

\

Now we are going to obtain bounds  \eqref{lm3.2:bound-derivative-n} and \eqref{lm3.2:bound-derivative-nk}. For this, we take $\overline n_h=1$ in \eqref{FEM-b} and use \eqref{lm3.2:Positivity-derivative} to have
$$
\|\partial_t n_{h,k}\|_{L^1(\Omega)}= (\partial_t n_{h,k}, 1) = (\partial_t n_{h,k}, 1)_h\le G(0) \|n_{h,k}\|_{L^1(\Omega)}\le G(0) |\Omega| N_{\rm max}(k);
$$  
hence estimate \eqref{lm3.2:bound-derivative-n} holds. Furthermore, we have, by \eqref{lm1:DMP-nk} and \eqref{lm3.2:Positivity-derivative}, that
$$
\|\partial_{t} (n_{h,k})^k\|_{L^1(0,T; L^1(\Omega))}=\int_0^T\frac{d}{dt} ((n_{h,k})^k, 1)\,dt = ((n_{h,k})^k(T)-(n_{h,k})^k (0), 1) \le 2 |\Omega| N_{\rm max}(k)  P_{\rm max};
$$  
hence estimate \eqref{lm3.2:bound-derivative-nk} holds. 
\end{proof}

We are now concerned with an a priori estimate for the gradient of $n_{h,k}$ and $\mathcal{I}_h ((n_{h,k})^k)$. These estimates will play an important role in obtaining compactness results which allow us to pass to the limit  as $(h,k)\to (0,+\infty)$ from scheme \eqref{FEM}  towards weak solutions  $(n_\infty, p_\infty)$ of problem (\ref{Hele-Shaw})-\eqref{Complemetary-Hele-Shaw}. 

\begin{lemma} Suppose that $\rm(H1)$-$\rm (H4)$ are satisfied. Then there exists a constant $C>0$, independent of $h$ and $k$, such that 
\begin{equation}\label{lm3.3:unifor-grad-n}
\|\mathcal{D}(n_{h,k})^{1/2}\nabla n_{h,k} \|_{L^{\infty}(0,T; L^2(\Omega))}
+\|\nabla n_{h,k}\|_{L^{\infty}(0,T; L^2(\Omega))}\le C
\end{equation}
and
\begin{equation}\label{lm3.3:unifor-grad-nk}
\|\nabla\mathcal{I}_h ((n_{h,k})^k)\|_{L^{\infty}(0,T; L^2(\Omega))}\le C.
\end{equation}
\end{lemma}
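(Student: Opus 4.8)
The plan is to reduce all three bounds to a single uniform estimate on $\|\nabla\Sigma(n_{h,k})\|_{L^\infty(0,T;L^2(\Omega))}$, where $\Sigma(n_{h,k})=\mathcal{I}_h((n_{h,k})^k)+\nu\,n_{h,k}$ is the field introduced in the proof of the previous lemma. The key structural fact is that, on an axis-aligned element $K$ with right-angle vertex $\boldsymbol a_0$, one has the elementwise identity $\nabla\mathcal{I}_h((n_{h,k})^k)|_K=\mathcal{D}(n_{h,k})|_K\,\nabla n_{h,k}|_K$, which is exactly what converts \eqref{FEM} into \eqref{FEM-b}; hence $\nabla\Sigma(n_{h,k})=(\mathcal{D}(n_{h,k})+\nu I_d)\,\nabla n_{h,k}$ as piecewise-constant vector fields. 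Since $\mathcal{D}(n_{h,k})$ is diagonal and nonnegative, inverting this relation entry by entry gives the pointwise bounds $|\nabla n_{h,k}|\le\nu^{-1}|\nabla\Sigma(n_{h,k})|$, together with $\mathcal{D}(n_{h,k})\nabla n_{h,k}\cdot\nabla n_{h,k}\le(4\nu)^{-1}|\nabla\Sigma(n_{h,k})|^2$ (from $d/(d+\nu)^2\le(4\nu)^{-1}$) and $|\mathcal{D}(n_{h,k})\nabla n_{h,k}|\le|\nabla\Sigma(n_{h,k})|$ (from $d^2/(d+\nu)^2\le1$). Integrating over $\Omega$, every norm appearing in \eqref{lm3.3:unifor-grad-n}--\eqref{lm3.3:unifor-grad-nk} is controlled by $\|\nabla\Sigma(n_{h,k})\|$, so it suffices to bound the latter uniformly in $(h,k)$.

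To estimate $\|\nabla\Sigma(n_{h,k})\|$ I would test the reformulated scheme \eqref{lm2-lab1} with $\overline n_h=w_{h,k}:=\partial_t\Sigma(n_{h,k})\in N_h$, which is admissible since $n_{h,k}\in C^1([0,T];N_h)$. By the definition \eqref{Discrete-Laplacian-P1} of $\widetilde\Delta_h$, the diffusion term becomes $-(\widetilde\Delta_h\Sigma(n_{h,k}),w_{h,k})_h=(\nabla\Sigma(n_{h,k}),\nabla\partial_t\Sigma(n_{h,k}))=\tfrac12\tfrac{d}{dt}\|\nabla\Sigma(n_{h,k})\|^2$, precisely the quantity to be estimated. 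The time-derivative term is $(\partial_t n_{h,k},w_{h,k})_h=\sum_{\a}\Sigma'(n_{h,k})(\a)\,(\partial_t n_{h,k}(\a))^2\int_\Omega\varphi_\a\ge0$ because $\Sigma'(n_{h,k})\ge\nu>0$, and may be discarded. The decisive input is the positivity $\partial_t n_{h,k}\ge0$ established in the previous lemma, which yields $w_{h,k}=\Sigma'(n_{h,k})\,\partial_t n_{h,k}\ge0$ at every node; combined with $0\le G\le G(0)$ and the discrete maximum principle $0\le n_{h,k}\le N_0$, the reaction term is bounded by $(G(p(n_{h,k}))n_{h,k},w_{h,k})_h\le G(0)N_0\int_\Omega w_{h,k}=G(0)N_0\tfrac{d}{dt}\int_\Omega\Sigma(n_{h,k})$.

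Collecting these identities and integrating in time from $0$ to $t$ yields $\tfrac12\|\nabla\Sigma(n_{h,k})(t)\|^2\le\tfrac12\|\nabla\Sigma(n_{h,k})(0)\|^2+G(0)N_0\big(\int_\Omega\Sigma(n_{h,k})(t)-\int_\Omega\Sigma(n_{h,k})(0)\big)$. The integral $\int_\Omega\Sigma(n_{h,k})(t)$ is bounded uniformly in $(h,k)$ by the maximum principles \eqref{lm1:DMP-n} and \eqref{lm1:DMP-nk}: since $(n_{h,k})^k(\a)\le P_{\rm max}N_0$ and $n_{h,k}(\a)\le N_0$ at every node, one gets $\int_\Omega\Sigma(n_{h,k})(t)\le(P_{\rm max}+\nu)N_0|\Omega|$. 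Hence the whole right-hand side reduces to $\tfrac12\|\nabla\Sigma(n_{h,k})(0)\|^2$ plus a constant independent of $(h,k)$, and the proof is complete once $\|\nabla\Sigma(n_{h,k})(0)\|=\|\nabla\Sigma(n^0_{h,k})\|$ is bounded uniformly.

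This initial bound is the main obstacle, and the only place where the preparation of the data enters. The naive estimate $\|\nabla\Sigma(n^0_{h,k})\|\le(kP_{\rm max}+\nu)\|\nabla n^0_{h,k}\|$ carries a spurious factor $k$ and must be avoided; instead one controls the discrete initial pressure gradient $\|\nabla\mathcal{I}_h((n^0_{h,k})^k)\|$ directly. The guiding computation is the continuous identity $\nabla\big((n^0_k)^k\big)=n^0_k\,\nabla p(n^0_k)$, so that a uniform bound on the initial pressure gradient $\|\nabla p(n^0_k)\|$ (the well-preparedness hypothesis on $\{n^0_k\}$) forces $\|\nabla((n^0_k)^k)\|\le N_0\|\nabla p(n^0_k)\|\le C$; the stability and approximation properties \eqref{Q-sta}--\eqref{Q-com} of $\mathcal{Q}_h$ together with \eqref{bound-n0} then transfer this $k$-independent bound to $\|\nabla\mathcal{I}_h((n^0_{h,k})^k)\|$, and hence to $\|\nabla\Sigma(n^0_{h,k})\|$. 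With this in hand the estimate on $\|\nabla\Sigma(n_{h,k})\|_{L^\infty(0,T;L^2(\Omega))}$ closes, and the elementwise inversion of the first paragraph delivers both \eqref{lm3.3:unifor-grad-n} and \eqref{lm3.3:unifor-grad-nk}.
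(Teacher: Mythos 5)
Your overall strategy is genuinely different from the paper's and most of it is sound: the elementwise identity $\nabla\Sigma(n_{h,k})=(\mathcal{D}(n_{h,k})+\nu I_d)\nabla n_{h,k}$, the pointwise inversion bounds, the test function $\overline n_h=\partial_t\Sigma(n_{h,k})$, the sign of the time-derivative term, and the treatment of the reaction term via $\partial_t n_{h,k}\ge 0$ all check out. The gap is exactly where you locate it, and it is not closed as written: the bound on $\|\nabla\Sigma(n^0_{h,k})\|$ is made to rest on ``a uniform bound on the initial pressure gradient $\|\nabla p(n^0_k)\|$ (the well-preparedness hypothesis),'' but no such hypothesis appears among $\rm(H1)$--$\rm(H4)$ or in \eqref{Min-Max-Initial-Data-p}--\eqref{ID-limit-h}; the paper only assumes $n^0_k\in H^1\cap L^\infty$ with $0\le p(n^0_k)\le P_{\rm max}$ and the discrete sign condition \eqref{Positivity-Derivative}. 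Moreover, even granting a uniform bound on $\|\nabla (n^0_k)^k\|$, the proposed transfer to the discrete level via \eqref{Q-sta}--\eqref{Q-com} does not apply: the quantity you need to control is $\|\nabla\mathcal{I}_h((\mathcal{Q}_h n^0_k)^k)\|$, and $\mathcal{I}_h((\mathcal{Q}_h n^0_k)^k)$ is not $\mathcal{Q}_h((n^0_k)^k)$, so the stability of $\mathcal{Q}_h$ gives you nothing here; the only elementary estimate available is the one you correctly reject as carrying a factor $k$. So the lemma is not proved from the stated hypotheses by your argument.

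Two remarks. First, your approach can in fact be rescued without new hypotheses: taking $\overline n_h=\Sigma(n^0_{h,k})=\mathcal{I}_h((n^0_{h,k})^k)+\nu n^0_{h,k}\ge 0$ in \eqref{Positivity-Derivative} gives directly
\begin{equation*}
\|\nabla\Sigma(n^0_{h,k})\|^2\le (G(p(n^0_{h,k}))\,n^0_{h,k},\Sigma(n^0_{h,k}))_h\le G(0)\,N_0\,(P_{\rm max}+\nu)N_0\,|\Omega|,
\end{equation*}
which is uniform in $(h,k)$ and closes your argument. Second, the paper avoids the initial-data issue altogether by a shorter route: it tests \eqref{FEM} at each fixed time with $\overline n_h=n_{h,k}$ and with $\overline n_h=\mathcal{I}_h((n_{h,k})^k)$, observes that $(\partial_t n_{h,k},n_{h,k})_h\ge 0$ and $(\partial_t n_{h,k},\mathcal{I}_h((n_{h,k})^k))_h\ge 0$ by \eqref{lm1:DMP-n} and \eqref{lm3.2:Positivity-derivative}, and reads off \eqref{lm3.3:unifor-grad-n} and \eqref{lm3.3:unifor-grad-nk} pointwise in $t$ from the remaining terms and \eqref{lm1:Energy-FEM} --- no time integration, no Gr\"onwall, and no bound on the initial gradients is ever needed.
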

\begin{proof}  Select $\overline n_h=n_{h,k}\in N_h$ in \eqref{FEM} to obtain 
$$
(\partial_t n_{h,k}, n_{h,k})_h +(\mathcal{D}(n_{h,k})\nabla n_{h,k}, \nabla n_{h,k})+\nu \|\nabla n_{h,k}\|^2 = \|G(p(n_{h,k}))^{1/2} n_{h,k}\|_h^2 \le G(0)\|n_{h,k}\|_h^2.
$$
From \eqref{lm1:DMP-n} and \eqref{lm3.2:Positivity-derivative}, we deduce that $(\partial_t n_{h,k}, n_{h,k})_h \ge 0$. Therefore, 
$$
\|\mathcal{D}(n_{h,k})^{1/2}\nabla n_{h,k}\|^2+\nu \|\nabla n_{h,k}\|^2 \le G(0)\|n_{h,k}\|_h^2.
$$
This last expression combined with \eqref{lm1:Energy-FEM} gives \eqref{lm3.3:unifor-grad-n}.  

Take $\overline n_h=\mathcal{I}_h((n_{h,k})^k)$ in \eqref{FEM} to have
$$
\begin{array}{l}
(\partial_t n_{h,k}, \mathcal{I}_h((n_{h,k})^k))_h + \|\nabla \mathcal{I}_h((n_{h,k})^k)\|^2
+\nu (\mathcal{D}((n_{h,k})^k)\nabla n_{h,k},  \nabla  n_{h,k})
\\
= (G(p(n_{h,k})) n_{h,k}, \mathcal{I}_h((n_{h,k})^k))_h
\le G(0) \|(n_{h,k})^{k-1}\|_{L^\infty(\Omega)} \|n_{h,k}\|_h^2
\le  G(0) P_{\rm max} \|n_{h,k}\|_h^2.
\end{array}
$$
From this, it follows that \eqref{lm3.3:unifor-grad-nk} holds from \eqref{lm1:Energy-FEM},  \eqref{lm3.2:Positivity-derivative}  and from noting that $(\mathcal{D}((n_{h,k})^k)\nabla n_{h,k},  \nabla  n_{h,k})\ge 0$ on recalling  \eqref{Descrite_Derivative}.

\end{proof}

\subsection{Passing to the limit}  From estimates \eqref{lm1:DMP-n} and \eqref{lm3.3:unifor-grad-n} jointly with \eqref{lm1:DMP-nk} and \eqref{lm3.3:unifor-grad-nk}, we have that there exist two  limit functions $(n_\infty, p_\infty )\in L^{\infty}(0,T; H^1(\Omega))^2$ and a subsequence of $\{(n_{h,k}, \mathcal{I}_h( (n_{h,k})^k)\}_{h,k}$, which we still denote in the same way, such that  the following convergences hold, as $(h,k)\to (0,\infty)$:
\begin{equation}\label{limit-w-n_inf}
n_{h,k} \to n_\infty \quad \mbox{ in $L^{\infty}(0,T; H^1(\Omega)\cap L^\infty(\Omega))$-weakly-$\star$},
\end{equation}
and
\begin{equation}\label{limit-w-p_inf}
\mathcal{I}_h( (n_{h,k})^k) \to  p_\infty \quad \mbox{ in $L^\infty(0,T; H^1(\Omega)\cap L^\infty(\Omega))$-weakly-$\star$}.
\end{equation}

Before proceeding to  pass to the limit, we need to obtain some strong convergences via an Aubin-Lions campactness lemma \cite{Simon_1987}. From (\ref{lm1:DMP-n}), (\ref{lm3.2:bound-derivative-n}) and (\ref{lm3.3:unifor-grad-n}), we have that there exists a subsequence (not relabeled)  such that, as $(h,k)\to (0,\infty)$,
\begin{equation}\label{strong-conver-n}
n_{h,k} \to n_\infty\quad \mbox{in } L^p(\Omega\times(0,T))\mbox{-strongly, }  \forall\, p<\infty,
\end{equation}
and 
\begin{equation}\label{strong-conver-cont-n}
n_{h,k} \to n_\infty\quad \mbox{in } C^0([0,T];L^q(\Omega))\mbox{-strongly, } \forall\, q<2^*,
\end{equation}
where $2^*$ stands for  the conjugate exponent of $2$ defined by $1/2^*=1/2-1/d$. Analogously,  from (\ref{lm1:DMP-nk}), (\ref{lm3.2:bound-derivative-nk}), and (\ref{lm3.3:unifor-grad-nk}), we have 
\begin{equation}\label{strong-conver-nk}
\mathcal{I}_h((n_{h,k})^k)\to p_\infty\quad \mbox{in } L^p(\Omega\times (0,T))\mbox{-strongly, } \forall\, p<\infty.  
\end{equation}

As a result, we also have the  strong convergence of $p(n_{h,k})$ towards $p_\infty$, but under  hypothesis $(\rm H5)$ in Theorem \ref{Th:Main}.
\begin{lemma} Assuming hypotheses $\rm (H1)$-$\rm (H5)$, 
 it follows that, as $(h,k)\to (0,\infty)$,
\begin{equation}\label{strong-conver-p}
p(n_{h,k}) \to p_\infty \quad \mbox{in } L^p((0,T)\times \Omega)\mbox{-strongly for any } p<\infty.  
\end{equation}
Moreover, 
\begin{equation} \label{eq-p-inf-n-inf}
p_\infty n_\infty \equiv p_\infty \quad \hbox{a.e.~in $(0,T)\times \Omega$.}
\end{equation}
\end{lemma}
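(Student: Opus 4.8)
The plan is to obtain \eqref{strong-conver-p} by bridging $p(n_{h,k})=\frac{k}{k-1}(n_{h,k})^{k-1}$ to the already convergent quantity $\mathcal{I}_h((n_{h,k})^k)$ of \eqref{strong-conver-nk} through two elementary gaps: the nodal interpolation error $(n_{h,k})^k-\mathcal{I}_h((n_{h,k})^k)$ and the purely algebraic difference $(n_{h,k})^{k-1}-(n_{h,k})^k$. Throughout I will exploit that all quantities involved are bounded in $L^\infty((0,T)\times\Omega)$ uniformly in $(h,k)$ by \eqref{lm1:DMP-n}--\eqref{lm1:DMP-nk}, so that it suffices to prove each difference tends to $0$ in $L^1((0,T)\times\Omega)$; strong $L^p$ convergence for every $p<\infty$ then follows from $\|f\|_{L^p}\le\|f\|_{L^1}^{1/p}\|f\|_{L^\infty}^{1-1/p}$. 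A recurring ingredient is the elementary fact that $N_{\rm max}(k)=\bigl(\tfrac{k-1}{k}P_{\rm max}\bigr)^{1/(k-1)}\to 1$ as $k\to\infty$.

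First I would control the interpolation error. On each $K\in\mathcal{T}_h$ the function $n_{h,k}$ is affine, so $\nabla^2\bigl((n_{h,k})^k\bigr)=k(k-1)(n_{h,k})^{k-2}\,\nabla n_{h,k}\otimes\nabla n_{h,k}$, and \eqref{interp_error_nodal_Linf_and_W1inf_for_K} yields $\|(n_{h,k})^k-\mathcal{I}_h((n_{h,k})^k)\|_{L^\infty(K)}\le C\,h_K^2\,k(k-1)\,\|(n_{h,k})^{k-2}\|_{L^\infty(K)}\,|\nabla n_{h,k}|_K|^2$. Summing over $K$, using $\int_K|\nabla n_{h,k}|^2=|K|\,|\nabla n_{h,k}|_K|^2$, the uniform gradient bound \eqref{lm3.3:unifor-grad-n}, and the uniform bound $\|(n_{h,k})^{k-2}\|_{L^\infty}\le N_{\rm max}(k)^{k-2}\le C$ (from $N_{\rm max}(k)\to1$ together with $N_{\rm max}(k)^{k-1}=\tfrac{k-1}{k}P_{\rm max}$), I expect $\|(n_{h,k})^k-\mathcal{I}_h((n_{h,k})^k)\|_{L^1(\Omega)}\le C(kh)^2$ uniformly in $t$, which vanishes exactly under hypothesis (H5). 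Combined with \eqref{strong-conver-nk} this gives $(n_{h,k})^k\to p_\infty$ strongly. This is the step where I expect the main difficulty to lie: it is the only place forcing the scaling $kh\to0$, since the two spatial derivatives of $(n_{h,k})^k$ produce the dangerous factor $k(k-1)$ that must be absorbed by $h^2$.

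Next I would close the remaining gap via the uniform estimate $\|(n_{h,k})^{k-1}-(n_{h,k})^k\|_{L^\infty(\Omega)}\to0$. Writing the difference as $(n_{h,k})^{k-1}(1-n_{h,k})$ and maximizing $x\mapsto x^{k-1}(1-x)$ over $x\in[0,N_{\rm max}(k)]$, the maximum over $[0,1]$ is attained at $x=\tfrac{k-1}{k}$ with value $\bigl(1-\tfrac1k\bigr)^{k-1}\tfrac1k\le C/k$, while on $[1,N_{\rm max}(k)]$ the difference is controlled by $\tfrac{k-1}{k}P_{\rm max}\,(N_{\rm max}(k)-1)\to0$; hence the whole difference tends to $0$ uniformly. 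Therefore $(n_{h,k})^{k-1}\to p_\infty$ strongly as well, and since $p(n_{h,k})=\tfrac{k}{k-1}(n_{h,k})^{k-1}$ with $\tfrac{k}{k-1}\to1$, estimate \eqref{strong-conver-p} follows.

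Finally, for \eqref{eq-p-inf-n-inf} I would pass to the limit in the pointwise identity $(n_{h,k})^k=(n_{h,k})^{k-1}\,n_{h,k}$. The left-hand side converges strongly to $p_\infty$ by the preceding step, while on the right-hand side $(n_{h,k})^{k-1}\to p_\infty$ and $n_{h,k}\to n_\infty$ strongly by \eqref{strong-conver-n}, and since both factors are uniformly bounded in $L^\infty$ their product converges strongly to $p_\infty n_\infty$. Identifying the two limits yields $p_\infty=p_\infty n_\infty$ a.e.\ in $(0,T)\times\Omega$, which is \eqref{eq-p-inf-n-inf}.
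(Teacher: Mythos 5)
Your proposal is correct, but it follows a genuinely different route from the paper for the key identification step. The paper bounds $\mathcal{I}_h((n_{h,k})^k)-(n_{h,k})^k$ by inserting an auxiliary piecewise\emph{-constant} interpolant $\mathcal{P}_h$ and using first-order mean-value expansions (yielding an $O(kh)+O(h)$ bound, rather than your $O((kh)^2)$ from the second-order nodal estimate \eqref{interp_error_nodal_Linf_and_W1inf_for_K} -- both vanish under (H5), and this is indeed where (H5) is consumed in both arguments). The real divergence is afterwards: the paper extracts a pointwise a.e.\ convergent subsequence of $(n_{h,k},(n_{h,k})^k)$, introduces the auxiliary function $\widetilde p_\infty=p_\infty/n_\infty$ (set to $0$ where $n_\infty=0$), shows $p(n_{h,k})=\tfrac{k}{k-1}(n_{h,k})^k/n_{h,k}\to\widetilde p_\infty$ a.e.\ while also $\tfrac{k}{k-1}(n_{h,k})^k=(1-\tfrac1k)^{1/(k-1)}p(n_{h,k})^{k/(k-1)}\to\widetilde p_\infty$, concludes $p_\infty\equiv\widetilde p_\infty$, and only then recovers \eqref{strong-conver-p} by dominated convergence and \eqref{eq-p-inf-n-inf} by construction of $\widetilde p_\infty$. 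You instead bridge $(n_{h,k})^k$ to $(n_{h,k})^{k-1}$ directly via the uniform elementary bound $\sup_{x\in[0,N_{\rm max}(k)]}x^{k-1}|1-x|\le\max\bigl(\tfrac1k,\,P_{\rm max}|N_{\rm max}(k)-1|\bigr)\to0$, which avoids subsequence extraction, a.e.\ arguments and the auxiliary $\widetilde p_\infty$ altogether, and then obtain \eqref{eq-p-inf-n-inf} by passing to the limit in the algebraic identity $(n_{h,k})^k=(n_{h,k})^{k-1}\,n_{h,k}$ using uniform $L^\infty$ bounds. Your version is more quantitative and, to my mind, cleaner; the paper's version has the mild advantage of producing the explicit pointwise description $p_\infty=p_\infty/n_\infty\cdot n_\infty$ along the way, but both deliver exactly the statements \eqref{strong-conver-p} and \eqref{eq-p-inf-n-inf}.
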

\begin{proof} For each element $K\in \mathcal{T}_h$ with vertices $\{\boldsymbol{a}_0,\cdots\boldsymbol{a}_d\}$,  we associate once and for all a vertex $a_K$ of K. Thus we define a piecewise constant function $\mathcal{P}_h(n^k_{h,k})(\boldsymbol{x})= n^k_{h,k}(\boldsymbol{a}_K)$ for all $\boldsymbol{x}\in K$, which satisfies  
$$
\mathcal{P}_h(n^k_{h,k})(\boldsymbol{x})-n^k_{h,k}(\boldsymbol{x})= \nabla (n^k_{h,k}(\boldsymbol{\xi}_{\boldsymbol{a}_K}) )\cdot(\boldsymbol{a}_K-\boldsymbol{x})
=k\, n^{k-1}_{h,k}(\boldsymbol{\xi}_{\boldsymbol{a}_K})
\nabla n_{h,k}|_K \cdot(\boldsymbol{a}_K-\boldsymbol{x})
$$
where $\boldsymbol{\xi}_{\boldsymbol{a}_K}=\lambda \boldsymbol{a}_K+(1-\lambda)\boldsymbol{x}$ with $\lambda\in (0,1)$.  Then we have, by \eqref{lm1:DMP-nk-1} and \eqref{lm3.3:unifor-grad-n}, that 
$$
\|\mathcal{P}_h(n^k_{h,k})-n^k_{h,k}\|_{L^\infty(0,T;L^2(\Omega))}\le C\, k\, h \, \|n_{h,k}^{k-1}\|_{L^\infty(0,T;L^\infty(\Omega))} \|\nabla n_{h,k}\|_{L^\infty(0,T;L^2(\Omega))}\le C \, k\, h\, P_{\rm max}. 
$$
The above argument also shows by replacing $n^k_{h,k}$ by $\mathcal{I}_h(n^k_{h,k})$ and using  \eqref{lm3.3:unifor-grad-nk} that   
$$
\|\mathcal{I}_h(n^k_{h,k})-\mathcal{P}_h(n^k_{h,k})\|_{L^\infty(0,T;L^2(\Omega))}\le C\, h \, \|\nabla \mathcal{I}_h(n^k_{h,k})\|_{L^\infty(0,T;L^2(\Omega))} \le C\, h.
$$
Thus, by (\ref{strong-conver-nk}) and $\rm(H5)$, we deduce, the following convergence, as $(h,k)\to (0,\infty)$:
\begin{equation}\label{lm3.4-lab1}
n_{h,k}^k \to p_\infty\quad \mbox{in } L^p((0,T)\times \Omega)\mbox{-strongly } \forall p<\infty.
\end{equation} 

In view of \eqref{strong-conver-n} and  \eqref{lm3.4-lab1}, there is a subsequence (not relabeled) of $\{(n_{h,k}, n_{h,k}^k)\}_{h,k}$ such that, as $(h,k)\to (0,\infty)$:
$$
(n_{h,k}(\boldsymbol{x},t), n_{h,k}^k(\boldsymbol{x},t))
\to (n_{\infty}(\boldsymbol{x},t), p_{\infty}(\boldsymbol{x},t))
\quad \hbox{a.e.~$(\boldsymbol{x},t)\in \Omega\times(0,T)$.}
$$
Thus, defining
$$
\widetilde p_\infty(\boldsymbol{x},t)=
\left\{
\begin{array}{ll}
\dfrac{p_\infty(\boldsymbol{x},t)}{n_\infty(\boldsymbol{x},t)}  &   \hbox{if $n_\infty(\boldsymbol{x},t)\not=0$,}   \\
 0 &    \hbox{otherwise,}
\end{array}
\right.
$$ 
it follows that, as $(h,k)\to (0,\infty)$,
$$
p(n_{h,k}(\boldsymbol{x},t))=\frac{k}{k-1}\frac{n_{h,k}^k(\boldsymbol{x},t)}{n_{h,k}(\boldsymbol{x},t)}\to \widetilde p_\infty(\boldsymbol{x},t) 
\quad \hbox{a.e.~$(\boldsymbol{x},t)\in \Omega\times(0,T)$;}
$$
furthermore,
$$
p_\infty(\boldsymbol{x},t)\leftarrow\frac{k}{k-1} n^{k}_{h,k}(\boldsymbol{x},t)=\Big(1-\frac{1}{k} \Big)^\frac{1}{k-1} p(n_{h,k}(\boldsymbol{x},t))^{\frac{k}{k-1}}\to\widetilde p_\infty(\boldsymbol{x},t). 
$$
Thus, $p_\infty\equiv\widetilde p_\infty$ a.e.~$(\boldsymbol{x},t)\in \Omega\times(0,T)$ and, in particular, one has equality \eqref{eq-p-inf-n-inf} and the pointwise convergence
$$
p(n_{h,k}(\boldsymbol{x},t))\to p_{\infty}(\boldsymbol{x},t)\quad \hbox{a.e.~$(\boldsymbol{x},t)\in \Omega\times(0,T)$.}
$$
Finally, \eqref{strong-conver-p} is deduced from the dominated convergence theorem since  $p(n_{h,k})$ is bounded in $L^\infty(\Omega\times (0,T))$.
\end{proof}
\subsubsection{Convergence towards \eqref{Hele-Shaw}}
We are now ready to pass to the limit in scheme \eqref{FEM} as $(h,k)\to (0,\infty)$. Let $\overline n\in C_c^\infty (\Omega)$ and $\phi\in C_c^\infty(0,T)$. Consider $\overline n_h=\mathcal{Q}_h(\overline n)$ in \eqref{FEM}, multiply by $\phi$ and integrate on (0,T) to get
$$
\begin{array}{l}
\displaystyle
-\int_0^T (n_{h,k}, \mathcal{Q}_h(\overline n))_h \phi'(t ) {\rm dt} +  \int_0 ^T( \nabla \mathcal{I}_h(n^k_{h,k}), \nabla \mathcal{Q}_h(\overline n)) \phi(t ) {\rm dt}
\\
\displaystyle
+\nu \int_0^T(\nabla n_{h,k},  \nabla \mathcal{Q}_h(\overline n)) \phi(t ) {\rm dt}
 = \int_0^T(G(p (n_{h,k})) n_{h,k}, \mathcal{Q}_h(\overline n))_h \phi(t ) {\rm dt}. 
\end{array}
$$
We briefly outline the main steps of the passage to the limit since the arguments are quite classical. We write
$$
\int_0^T (n_{h,k}, \mathcal{Q}_h(\overline n))_h \phi'(t ) {\rm dt}= \int_0^T (n_{h,k}, \mathcal{Q}_h(\overline n)) \phi'(t ) {\rm dt}
 +\int_0^T \big[ (n_{h,k}, \mathcal{Q}_h(\overline n))_h-(n_{h,k}, \mathcal{Q}_h(\overline n))\big] \phi'(t ) {\rm dt}.
$$
It is an easy matter to show, from \eqref{Q-app} and \eqref{strong-conver-n}, that
$$
\int_0^T (n_{h,k}, \mathcal{Q}_h(\overline n)) \phi'(t ) {\rm dt} \to \int_0^T (n_\infty, \overline n) \phi'(t ) {\rm dt},
$$
and, from \eqref{error-L1} and \eqref{Q-sta}, that
$$
\int_0^T \big[ (n_{h,k}, \mathcal{Q}_h(\overline n))_h-(n_{h,k}, \mathcal{Q}_h(\overline n))\big] \phi'(t ) {\rm dt}\to 0.
$$
Therefore,
$$
\int_0^T (n_{h,k}, \mathcal{Q}_h(\overline n))_h \phi'(t ) {\rm dt}
 \to \int_0^T (n_\infty, \overline n) \phi'(t ) {\rm dt}.
$$
 Analogously, we obtain   
$$
\int_0^T(G(p (n_{h,k})) n_{h,k}, \mathcal{Q}_h(\overline n))_h \phi(t ) {\rm dt} \to  \int_0^T(G(p_{\infty}) n_\infty, \overline n)\phi(t ) {\rm dt}
$$
from \eqref{Q-app},   \eqref{strong-conver-n} and \eqref{strong-conver-p}. The diffusion terms are treated as follows. In view of \eqref{Q-app}, \eqref{limit-w-n_inf} and  \eqref{limit-w-p_inf}, it is easy to check that
$$
\int_0 ^T( \nabla \mathcal{I}_h(n^k_{h,k}), \nabla \mathcal{Q}_h(\overline n)) \phi(t ) {\rm dt}\to \int_0 ^T( \nabla p_\infty, \nabla \overline n) \phi(t ) {\rm dt}
$$
and
$$
\nu \int_0^T(\nabla n_{h,k},  \nabla \mathcal{Q}_h(\overline n)) \phi(t ) {\rm dt}\to \nu \int_0^T(\nabla n_\infty,  \nabla \overline n) \phi(t ) {\rm dt}. 
$$
 We have thus proved that \eqref{Hele-Shaw} holds in the distributional sense.

\subsubsection{Initial condition \eqref{IC-limit}} The initial condition \eqref{IC-limit} can be recovered from \eqref{strong-conver-cont-n}, which gives $n_{h,k}|_{t=0}\to n_\infty |_{t=0}$ in $L^q(\Omega)$, for $1\le q< 2^*$, and from \eqref{ID-limit} and \eqref{ID-limit-h}, which give $n^0_{k,h}\to n_\infty^0$ in $L^p(\Omega)$, for $1\le p<\infty$ as $(h,k)\to (0,+\infty)$.

\subsubsection{Equivalence between \eqref{Hele-Shaw} and \eqref{Hele-Shaw_II}}
In order to see the equivalence between \eqref{Hele-Shaw} and \eqref{Hele-Shaw_II} we must prove that $\nabla p_\infty \equiv n_\infty\nabla p_\infty$ which will be  obtained by proving $p_\infty \nabla n_\infty \equiv 0$ and using the equality in \eqref{eq-p-inf-n-inf}.  Indeed, for each $\boldsymbol{x}\in K$, we decompose $p(n_{h,k}(\boldsymbol{x}) ) \partial_{\boldsymbol{x}_i}n_{h,k}(\boldsymbol{x})$ by using the intermediate vector $\boldsymbol{\xi}_i$ given in \eqref{ident-D} into  
$$
\begin{array}{l}
\displaystyle
p(n_{h,k}(\boldsymbol{x}) )\partial_{\boldsymbol{x}_i}n_{h,k}(\boldsymbol{x})=
\displaystyle
\frac{k}{k-1} n^{k-1}_{h,k}(\boldsymbol{\xi}_i) \partial_{\boldsymbol{x}_i}n_{h,k}(\boldsymbol{x})
+\frac{k}{k-1} (n^{k-1}_{h,k}(\boldsymbol{x})-n^{k-1}_{h,k}(\boldsymbol{\xi}_i)) \partial_{\boldsymbol{x}_i}n_{h,k}(\boldsymbol{x})
\\
\qquad =
\displaystyle
\frac{\sqrt{k}}{k-1} n^{\frac{k-1}{2}}_{h,k}(\boldsymbol{\xi}_i) \sqrt{k}\, n^{\frac{k-1}{2}}_{h,k}(\boldsymbol{\xi}_i) \partial_{\boldsymbol{x}_i} n_{h,k}(\boldsymbol{x})
+ k (\boldsymbol{x}-\boldsymbol{\xi}_i) n^{k-2}_{h,k}(\boldsymbol{\eta}_i) (\partial_{\boldsymbol{x}_i}n_{h,k}(\boldsymbol{x}))^2,
\end{array}
$$  
where we have utilized the mean value theorem in the last term for $\boldsymbol{\eta_i}=\alpha \boldsymbol{\xi}_i+(1-\alpha) \boldsymbol{x}$ with $\alpha\in (0,1)$ and that $ \partial_{\boldsymbol{x}_i} n_{h,k}(\boldsymbol{x})$ is constant on $K$. Thus, by virtue of 
\eqref{ident-D}, we find 
$$
\begin{array}{rcl}
\|p(n_{h,k})\partial_{\boldsymbol{x}_i}n_{h,k}\|_{L^1(K)}&\le& \dfrac{\sqrt{k}}{k-1} \| n^{\frac{k-1}{2}}_{h,k}(\boldsymbol{\xi}_i) \sqrt{k}\, n^{\frac{k-1}{2}}_{h,k}(\boldsymbol{\xi}_i) \partial_{\boldsymbol{x}_i} n_{h,k}\|_{L^1(K)}
\\
&&+ k \, h\, \| n^{k-2}_{h,k}(\boldsymbol{\eta}_i) (\partial_{\boldsymbol{x}_i}n_{h,k}(\boldsymbol{x}))^2\|_{L^1(K)}
\\
&\le&\dfrac{\sqrt{k}}{k-1} \sqrt{P_{\rm max}}  \| \mathcal{D}(n^k_{h,k})^{1/2}\nabla n_{h,k}\|_{L^2(K)}
\\
&&+ C k \, h\,  P_{\rm max}\|\nabla n_{h,k} \|_{L^2(K)}^2,
\end{array}
$$
where we have used
 $n^{k-2}_{h,k}(\boldsymbol{\eta}_i)\le N_{\rm max}(k)^{k-2}=(\frac{k}{k-1} P_{\rm max})^{\frac{k-2}{k-1}}\to P_{\rm max}$ as $k\to +\infty$ in the last line.

Summing over $K\in\mathcal{T}_h$, noting \eqref{lm3.3:unifor-grad-n} and recalling the constraint $h\,k\to 0$ given in (H5), we conclude that 
$$
p(n_{h,k})\nabla n_{h,k}\to \boldsymbol{0}\mbox{ in } L^\infty(0,T; L^1(\Omega))\mbox{-strongly as } (h,k)\to (0,\infty).
$$  
We further know, by \eqref{limit-w-n_inf} and  \eqref{strong-conver-p}, that  
$$
p(n_{h,k}) \nabla n_{h,k} \to p_\infty\nabla n_\infty  \mbox{ as } (h,k)\to (0,\infty),
$$
and hence $p_\infty \nabla n_\infty \equiv 0$ a.e.~in $\Omega\times(0,T)$.

\subsubsection{Convergence towards the complementary relation \eqref{Complemetary-Hele-Shaw}}
To finish the proof of Theorem~\ref{Th:Main}, it remains to prove that \eqref{Complemetary-Hele-Shaw} holds in the distributional sense. In doing so, we will start by proving that
\begin{equation}\label{eq:compl-positive}
0\le \int_0^T(G(p_\infty) n_\infty, p_\infty\psi)-(\nabla(p_\infty+\nu n_\infty),\nabla (p_\infty\psi)) {\rm d}s
\end{equation}
and 
\begin{equation}\label{eq:compl-negative}
0\ge \int_0^T(G(p_\infty) n_\infty, p_\infty\psi)-(\nabla(p_\infty+\nu n_\infty),\nabla (p_\infty\psi)) {\rm d}s
\end{equation}
hold for all $\psi\in C_c^\infty(\overline\Omega\times [0,T])$ with $\psi\ge 0$. 

$\bullet$ To begin with, we prove that \eqref{eq:compl-positive} is true. We use \eqref{lm2-lab1} to write
$$
\partial_t n_{h,k}-\widetilde\Delta_h \Sigma(n_{h,k})=\mathcal{I}_h(G(p(n_{h,k})) n_{h,k}).
$$
Let $\rho_\varepsilon=\rho_\varepsilon(t)$ be a time regularizing kernel with compact support of length $\varepsilon>0$. Then,  extending $n_{h,k}$ by zero  outside $[0,T]$, we have 
\begin{equation}\label{eq:regularized}
\partial_t n_{h,k}*\rho_\varepsilon-\widetilde\Delta_h ( \Sigma(n_{h,k})*\rho_\varepsilon)=\mathcal{I}_h((G(p(n_{h,k})) n_{h,k})*\rho_\varepsilon),
\end{equation}
where we have used the equalities 
$\widetilde\Delta_h ( \Sigma(n_{h,k})*\rho_\varepsilon) = \widetilde\Delta_h ( \Sigma(n_{h,k}))*\rho_\varepsilon$ and $\mathcal{I}_h((G(p(n_{h,k})) n_{h,k})*\rho_\varepsilon)=\mathcal{I}_h(G(p(n_{h,k})) n_{h,k})*\rho_\varepsilon$ owing to the separation between spatial and temporal variables.

Since $\partial_t n_{h,k}*\rho_\varepsilon$ and  $(G(p(n_{h,k})) n_{h,k})*\rho_\varepsilon$ are uniformly bounded in $L^p(\Omega\times (0,T))$ for  $1\le p \le\infty$ with respect to $(h,k)$ for each fixed $\varepsilon$, we also have that 
$$
 -\widetilde\Delta_h ( \Sigma(n_{h,k})*\rho_\varepsilon)\quad \hbox{is  bounded in $L^p(\Omega\times (0, T))$.}
$$ as well. In virtue of Theorem \ref{th:L2H1-strong} and the above bounds combined with \eqref{strong-conver-n} and \eqref{strong-conver-nk}, we infer the following convergence, as $(h,k)\to (0,\infty)$:
\begin{equation}\label{strong-conver-grad-rho}
\nabla (\Sigma(n_{h,k})*\rho_\varepsilon) \to \nabla ((p_\infty+\nu n_\infty)*\rho_\varepsilon)
\quad\mbox{in $L^2(\Omega\times (0, T))$-strongly.} 
\end{equation}
 
On testing  \eqref{eq:regularized} against $\mathcal{Q}_h(\mathcal{I}_h(n_{h,k}^k)\psi)$ with $\psi\in C_c^\infty(\overline\Omega\times [0,T])$ such that $\psi\ge0$, it follows that 
\begin{equation}\label{eq:regularized-test}
\begin{array}{rcl}
\displaystyle
\int_0^T (\partial_t n_{h,k}*\rho_\varepsilon,\mathcal{Q}_h(\mathcal{I}_h(n_{h,k}^k)\psi) )_h &=& \displaystyle \int_0^T  ((G(p(n_{h,k})) n_{h,k})*\rho_\varepsilon, \mathcal{Q}_h(\mathcal{I}_h(n_{h,k}^k)\psi))_h
\\
&-& \displaystyle \int_0^T  (\nabla(\Sigma(n_{h,k})*\rho_\varepsilon),\nabla \mathcal{Q}_h(\mathcal{I}_h(n_{h,k}^k)\psi)).
\end{array}
\end{equation}
Since  $(\partial_t n_{h,k}*\rho_\varepsilon,\mathcal{Q}_h(\mathcal{I}_h(n_{h,k}^k)\psi) )_h\ge0$, we obtain
\begin{equation} \label{seq-ineq}
0\le \int_0^T((G(p(n_{h,k})) n_{h,k})*\rho_\varepsilon, \mathcal{Q}_h(\mathcal{I}_h(n_{h,k}^k)\psi))_h
-\int_0^T (\nabla(\Sigma(n_{h,k})*\rho_\varepsilon),\nabla \mathcal{Q}_h(\mathcal{I}_h(n_{h,k}^k)\psi)).
\end{equation}
Taking the limit as $(h,k)\to (0,\infty)$ yields
\begin{equation}\label{compl-lab1}
\int_0^T ((G(p(n_{h,k})) n_{h,k})*\rho_\varepsilon, \mathcal{Q}_h(\mathcal{I}_h(n_{h,k}^k)\psi))_h {\rm d}t
\to \int_0^T ((G(p_\infty) n_\infty)*\rho_\varepsilon, p_\infty\psi){\rm d}t
\end{equation}
and 
\begin{equation}\label{comp-lab2}
\int_0^T (\nabla(\Sigma(n_{h,k})*\rho_\varepsilon),\nabla \mathcal{Q}_h(\mathcal{I}_h(n_{h,k}^k)\psi)){\rm d}t
\to \int_0^T(\nabla((p_\infty+\nu n_\infty)*\rho_\varepsilon),\nabla (p_\infty\psi)) {\rm d}t.
\end{equation}

In order to prove \eqref{compl-lab1}, we use the decomposition $(u_h,v_h)_h=(u_h,v_h)+(\mathcal{I}_h (u_hv_h) - u_h v_h, 1)$ for  $u_h=(G(p(n_{h,k})) n_{h,k})*\rho_\varepsilon$ and $v_h=\mathcal{Q}_h(\mathcal{I}_h(n_{h,k}^k)\psi)$ to  write  
\begin{align*}
&\int_0^T ((G(p(n_{h,k})) n_{h,k})*\rho_\varepsilon, \mathcal{Q}_h(\mathcal{I}_h(n_{h,k}^k)\psi))_h= 
\int_0^T ((G(p(n_{h,k})) n_{h,k})*\rho_\varepsilon, \mathcal{Q}_h(\mathcal{I}_h(n_{h,k}^k)\psi))
\\
&\qquad
+\int_0^T (\mathcal{I}_h((G(p(n_{h,k})))n_{h,k})*\rho_\varepsilon \,
\mathcal{Q}_h(\mathcal{I}_h(n_{h,k}^k)\psi)-(G(p(n_{h,k})) n_{h,k})*\rho_\varepsilon  \, \mathcal{Q}_h(\mathcal{I}_h(n_{h,k}^k)\psi), 1).
\end{align*}
Then, it follows from \eqref{Q-com}, \eqref{strong-conver-n} and \eqref{strong-conver-p} that the first term converges to $\int_0^T ((G(p_\infty) n_\infty)*\rho_\varepsilon, p_\infty\psi){\rm d}t,$ and, on noting that $$\|\nabla \mathcal{Q}_h(\mathcal{I}_h(n_{h,k}^k)\psi)\|\le  C \|\nabla \mathcal{I}_h(n_{h,k}^k)\| \|\psi\|_{L^\infty}+ C \|\mathcal{I}_h(n_{h,k}^k)\|_{L^\infty}\|\nabla\psi\|$$ 
from \eqref{Q-sta}, and on recalling  \eqref{error-L1} and \eqref{lm3.3:unifor-grad-nk}, the second term converges to zero;  thereby \eqref{compl-lab1} holds.

In order to prove \eqref{comp-lab2}, we write    
\begin{align*}
\int_0^T (\nabla(\Sigma(n_{h,k})*\rho_\varepsilon),\nabla \mathcal{Q}_h(\mathcal{I}_h(n_{h,k}^k)\psi))&=
\int_0^T(\nabla(\Sigma(n_{h,k})*\rho_\varepsilon),\nabla (\mathcal{I}_h(n_{h,k}^k)\psi))
\\
&- \int_0^T(\nabla(\Sigma(n_{h,k})*\rho_\varepsilon),\nabla (\mathcal{I}_h(n_{h,k}^k)\psi-\mathcal{Q}_h(\mathcal{I}_h(n_{h,k}^k)\psi))).
\end{align*}
Then, it follows from \eqref{strong-conver-grad-rho}, \eqref{limit-w-p_inf} and \eqref{strong-conver-nk} that the first term converges to $\int_0^T (\nabla((p_\infty+\nu n_\infty)*\rho_\varepsilon),\nabla (p_\infty\psi ) ) {\rm d}t$, and on noting that $$
\|\nabla (\mathcal{I}_h(n_{h,k}^k)\psi-\mathcal{Q}_h(\mathcal{I}_h(n_{h,k}^k)\psi))\|\le h \|\nabla \mathcal{I}_h(n_{h,k}^k) \| \|\psi\|_{W^{2,\infty}(\Omega)}$$  from \eqref{Q-com}  and on recalling \eqref{lm3.3:unifor-grad-nk}, the second term converges to zero; thereby \eqref{comp-lab2} holds.

Thus, by applying the previous convergences \eqref{compl-lab1} and \label{comp-lab2} to \eqref{seq-ineq}, we arrive at  
$$
0\le \int_0^T  (G(p_\infty) n_\infty*\rho_\varepsilon, p_\infty\psi)-(\nabla((p_\infty+\nu n_\infty)*\rho_\varepsilon),\nabla (p_\infty\psi)) {\rm d}t,
$$
and finally \eqref{eq:compl-positive} holds by taking the limit as $\varepsilon\to 0$.

$\bullet$ We proceed to prove \eqref{eq:compl-negative}.  Write the first term on the right-hand side of \eqref{eq:regularized-test} as 
\begin{equation}\label{compl-lab3}
\begin{array}{rcl}
\displaystyle\int_0^T
(\partial_t n_{h,k}*\rho_\varepsilon,\mathcal{Q}_h(\mathcal{I}_h(n_{h,k}^k)\psi ))_h&=& \displaystyle\int_0^T(\partial_t n_{h,k}*\rho_\varepsilon,\mathcal{I}_h(n_{h,k}^k)\psi )_h
\\
&+& \displaystyle\int_0^T (\partial_t n_{h,k}*\rho_\varepsilon,\mathcal{Q}_h(\mathcal{I}_h(n_{h,k}^k)\psi)-\mathcal{I}_h(n_{h,k}^k)\psi)_h.
\end{array}
\end{equation}
These two terms are handled as follows. For the second term of \eqref{compl-lab3}, we have, by   \eqref{Equivalence-L2},  \eqref{Q-com} and \eqref{lm3.2:bound-derivative-n}, that
$$
\int_0^T (\partial_t n_{h,k}*\rho_\varepsilon,\mathcal{Q}_h(\mathcal{I}_h(n_{h,k}^k)\psi)-\mathcal{I}_h(n_{h,k}^k)\psi)_h {\rm d}s\to 0\quad \mbox{as } (h,k)\to (0,\infty).
$$
For the first term of \eqref{compl-lab3}, we have that, for each $\boldsymbol{a}\in\mathcal{N}_h$,
$$
\begin{array}{rcl}
(\partial_t n_{h,k}(\boldsymbol{a},t)*\rho_\varepsilon)\, n^k_{h,k}(\boldsymbol{a},t)&=&
\displaystyle
n^k_{h,k}(\boldsymbol{a},t)\int_\mathds{R} \partial_t n_{h,k}(\boldsymbol{a},s)\rho_\varepsilon(t-s) {\rm d}s
\\
&=&
\displaystyle
\int_\mathds{R} n^k_{h,k}(\boldsymbol{a},s) \partial_t n_{h,k}(\boldsymbol{a},s)\rho_\varepsilon(t-s) {\rm d}s
\\
&+&
\displaystyle
\int_\mathds{R}  (n^k_{h,k}(\boldsymbol{a},t)-n^k_{h,k}(\boldsymbol{a},s)) \partial_tn_{h,k}(\boldsymbol{a},s)\rho_\varepsilon(t-s) {\rm d}s.
\end{array}
$$
On integrating by parts in time and using \eqref{lm1:DMP-n} and  \eqref{lm1:DMP-nk}, we obtain 
$$
\int_\mathds{R} n^k_{h,k}(\boldsymbol{a},s) \partial_t n_{h,k}(\boldsymbol{a},s)\rho_\varepsilon(t-s) {\rm d}s
=\frac{1}{k+1}\int_\mathds{R} n^{k+1}_{h,k}(\boldsymbol{a},s)  \partial_t\rho_\varepsilon(t-s) {\rm d}s\to 0
$$
as $(h,k)\to (0,\infty)$. Furthermore, for $s > t$, we have that
$n^k_{h,k}(\boldsymbol{a},t) - n^k_{h,k}(\boldsymbol{a},s)\le 0$ owing to \eqref{lm3.2:Positivity-derivative}. Then, if we choose $\mbox{supp} (\rho_\varepsilon)\subset (-\varepsilon,0)$, then 
$$
\int_\mathds{R}  (n^k_{h,k}(\boldsymbol{a},t)-n^k_{h,k}(\boldsymbol{a},s)) \partial_tn_{h,k}(\boldsymbol{a},s)\rho_\varepsilon(t-s) {\rm d}s\le 0. 
$$
Letting first $(h,k)\to (0,\infty)$ in \eqref{compl-lab3} and then $\varepsilon\to 0$, we obtain \eqref{eq:compl-negative} by repeating the arguments that led to \eqref{eq:compl-positive}.

As a result of \eqref{eq:compl-positive} and \eqref{eq:compl-negative}, we note that 
\begin{equation} \label{variat-eq}
\int_0^T(G(p_\infty) n_\infty, p_\infty\psi)-(\nabla(p_\infty+\nu n_\infty),\nabla (p_\infty\psi)) {\rm d}s=0
\end{equation} 
 is satisfied for all $\psi\in C_c^\infty(\overline\Omega\times [0,T])$ with $\psi\ge0$, and therefore it also holds for all $\psi\in C_c^\infty(\overline\Omega\times [0,T])$. 

From the fact that $p_\infty\nabla n_\infty=0$ and $p_\infty\ge 0$ a.e.~in $\Omega\times(0,T)$, we also deduce that $\nabla p_\infty\cdot\nabla n_\infty=0$ a.e.~in $\Omega\times(0,T)$. As a consequence, the above variational equation \eqref{variat-eq} is equivalent to
$$
\int_0^T(G(p_\infty) n_\infty, p_\infty\psi)-(\nabla p_\infty,\nabla (p_\infty\psi)) {\rm d}s=0
$$
which, taking into account \eqref{eq-p-inf-n-inf}, implies  \eqref{Complemetary-Hele-Shaw} in the distributional sense. 

\section{An algorithm on unstructured meshes} 
In order to avoid using structured meshes, we propose the following scheme. Find $n_{h,k}\in C^1([0,T]; N_h)$ such that   
\begin{equation}\label{FEM-II}
\left\{
\begin{array}{l}
(\partial_t n_{h,k}, \overline n_h)_h +k ( (n_{h,k})^{k-1} \nabla n_{h,k}, \nabla \overline n_h )+\nu (\nabla n_{h,k},  \nabla \overline n_h)  =(G(p(n_{h,k})) n_{h,k}, \overline n_h)_h
\  \forall\, \overline n_h\in N_h,
\\
n_{h,k}(0)=n_{h,k}^0.
\end{array}
\right.
\end{equation}
Equivalently, we may write $\eqref{FEM-II}_1$ as
$$
(\partial_t n_{h,k}, \overline n_h)_h + ( n_{h,k} \nabla p(n_{h,k}), \nabla \overline n_h )+\nu (\nabla n_{h,k},  \nabla \overline n_h)  = (G(p(n_{h,k})) n_{h,k}, \overline n_h)_h.
$$

Here the finite-element space $N_h$ is constructed over a family of triangulations $\{{\mathcal T}_{h}\}_{h>0}$ of $\overline \Omega$ being shape-regular, quasi-uniform and with acute angles. This acuteness property implies \eqref{off-diagonal} for the particular case where  $D$ is the $d\times d$ identity matrix \cite{Ciarlet_Raviart_1973}. We summarize the properties of scheme \eqref{FEM-II} in the following theorem. 
\begin{theorem} Suppose that $(\rm H1)$-$(\rm H4)$ are satisfied. Then scheme \eqref{FEM-II} satisfies the following properties. For all $\a\in {\mathcal N}_h$ and $t\ge 0$, we have:
$$
0\le n_{h,k}(\a ,t)\le N_{\rm max}(k)
$$
$$
0\le n^k_{h,k}(\a ,t)\le P_{\rm max} N_{\rm max}(k),
$$
$$
\partial_t n_{h,k}(\a, t)\ge 0, \quad \partial_t n^k_{h,k}(\a,t)\ge 0,
$$
and the a priori estimates:
$$
\|n_{h,k}\|_{L^\infty(0,T; L^2(\Omega))\cap L^2(0,T; H^1(\Omega))}\le C,
$$
$$
\|\partial_t n_{h,k}\|_{L^\infty(0,T; L^1(\Omega))}+\|\partial_t n^k_{h,k}\|_{L^1(0,T; L^1(\Omega))}\le C,
$$
with $C>0$ being a constant independent of $(h,k)$.
\end{theorem}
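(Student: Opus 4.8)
The plan is to reproduce, \emph{mutatis mutandis}, the chain of arguments in Lemmas 3.1--3.3, the only structural change being that the nonlinear diffusion of \eqref{FEM-II} carries the \emph{scalar} coefficient $k(n_{h,k})^{k-1}$ rather than the diagonal matrix $\mathcal{D}(n_{h,k})$ of \eqref{Descrite_Derivative}. The useful observation is that, since $\nabla n_{h,k}$ and $\nabla\overline n_h$ are constant on every $K\in\mathcal{T}_h$, one has $k((n_{h,k})^{k-1}\nabla n_{h,k},\nabla\overline n_h)=\sum_{K}\big(k\int_K(n_{h,k})^{k-1}\big)\,\nabla n_{h,k}|_K\cdot\nabla\overline n_h|_K$, so the diffusion is driven by the nonnegative piecewise-constant coefficient $\widetilde d_K:=k\int_K(n_{h,k})^{k-1}\ge0$. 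Consequently only the \emph{identity} instance $D=I_d$ of the off-diagonal property \eqref{off-diagonal} is needed, which by the Remark following it (and the acuteness of the mesh adopted in this section) is exactly what is available. For the discrete maximum principle I would truncate the coefficient to $k([n_{h,k}]_T)^{k-1}$ as in \eqref{lm3.1-FEM}, test with $n_{h,k}^{\rm min}$ and with $(n_{h,k}-N_{\rm max}(k))^{\rm max}$; the cross terms, of type $\widetilde d_K\,n_{h,k}^-(\a)n_{h,k}^+(\widetilde\a)\,\nabla\varphi_\a\cdot\nabla\varphi_{\widetilde\a}$, have the right sign, and Gr\"onwall closes $0\le n_{h,k}\le N_{\rm max}(k)$; the power bounds $0\le n_{h,k}^k\le P_{\rm max}N_{\rm max}(k)$ follow verbatim from the Corollary. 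The energy bound $\|n_{h,k}\|_{L^\infty(0,T;L^2)\cap L^2(0,T;H^1)}\le C$ is then obtained by testing \eqref{FEM-II} with $\overline n_h=n_{h,k}$, discarding the nonnegative diffusion, and invoking \eqref{Equivalence-L2} and Gr\"onwall, exactly as in \eqref{lm1-lab2}.

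For the positivity $\partial_t n_{h,k}\ge0$ I would first try to mimic Lemma 3.2: differentiate \eqref{FEM-II} in time, set $m:=\partial_t n_{h,k}\in N_h$, and test with $m^{\rm min}=\mathcal{I}_h(m^-)$. The mass-lumped time term yields $\tfrac12\frac{d}{dt}\|m^{\rm min}\|_h^2$; the reaction term becomes $(R(n_{h,k})\,m,m^{\rm min})_h$ with $R:=G(p(n_{h,k}))+k\,(n_{h,k})^{k-1}G'(p(n_{h,k}))$ bounded for fixed $(h,k)$, and the nodal identity $m(\a)m^-(\a)=(m^-(\a))^2$ turns it into a harmless $\le\|R\|_{L^\infty}\|m^{\rm min}\|_h^2$; the $\nu$-term and the principal nonlinear part $(k(n_{h,k})^{k-1}\nabla m,\nabla m^{\rm min})$ are both nonnegative, again by the identity case of \eqref{off-diagonal} together with $n_{h,k}\ge0$. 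The initialisation $m^{\rm min}(0)\equiv0$ is furnished by the hypothesis \eqref{Positivity-Derivative}, read for scheme \eqref{FEM-II}, which forces $\partial_t n_{h,k}(\cdot,0)\ge0$ at every node.

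The hard part will be the term produced when the time derivative falls on the nonlinear coefficient, namely
\[
k(k-1)\big((n_{h,k})^{k-2}\,m\,\nabla n_{h,k},\ \nabla m^{\rm min}\big),
\]
which has no counterpart in Lemma 3.2. There the diffusion is $-\widetilde\Delta_h\Sigma(n_{h,k})$ with the \emph{nodal} potential $\Sigma(n_{h,k})\in N_h$, so $\partial_t$ commutes with $\widetilde\Delta_h$ without generating a coefficient derivative; for \eqref{FEM-II} the genuine power $(n_{h,k})^k\notin N_h$ blocks any such representation through \eqref{Discrete-Laplacian-P1}, and the displayed term resists absorption into the dissipation $\nu\|\nabla m^{\rm min}\|^2+\|k^{1/2}(n_{h,k})^{(k-1)/2}\nabla m^{\rm min}\|^2$: writing $m=m^{\rm min}+m^{\rm max}$, the positive part $m^{\rm max}$ is not controlled by $\|m^{\rm min}\|$, so a Young splitting leaves an uncontrolled $\|m^{\rm max}\|^2$. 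I expect this to be the genuine obstruction, and it is presumably why convergence of \eqref{FEM-II} is left open even though the a priori bounds persist.

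To circumvent it I would abandon the energy route for positivity and argue by discrete comparison in time. Since \eqref{FEM-II} is autonomous, the shift $n_{h,k}^\tau(\cdot,t):=n_{h,k}(\cdot,t+\tau)$ solves the same scheme with datum $n_{h,k}(\cdot,\tau)$, which \eqref{Positivity-Derivative} orders above $n_{h,k}(\cdot,0)$ nodally for small $\tau>0$. The plan is to establish a discrete comparison principle for \eqref{FEM-II} on acute meshes---its mass-lumped, acute-mesh assembly is of monotone, M-matrix type by the identity case of \eqref{off-diagonal}, so ordered data should propagate to ordered solutions---whence $n_{h,k}(\cdot,t+\tau)\ge n_{h,k}(\cdot,t)$ for all $t$; letting $\tau\downarrow0$ gives $\partial_t n_{h,k}\ge0$ and hence $\partial_t(n_{h,k})^k=k(n_{h,k})^{k-1}\partial_t n_{h,k}\ge0$. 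With positivity secured, the two remaining uniform bounds are immediate and identical to Lemma 3.2: testing \eqref{FEM-II} with $\overline n_h=1$ gives $\|\partial_t n_{h,k}\|_{L^\infty(0,T;L^1)}\le G(0)|\Omega|N_{\rm max}(k)$, while $\|\partial_t(n_{h,k})^k\|_{L^1(0,T;L^1)}=\big((n_{h,k})^k(T)-(n_{h,k})^k(0),1\big)\le2|\Omega|N_{\rm max}(k)P_{\rm max}$. Making the discrete comparison principle---and the first-order ordering of the shifted datum---fully rigorous for this degenerate, fully nonlinear scheme is the step I would expect to require the most care.
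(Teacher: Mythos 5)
Your treatment of the discrete maximum principle, the power bounds, the $L^2$--$H^1$ energy estimate and (granted positivity of $\partial_t n_{h,k}$) the two $L^1$ bounds on the time derivatives is correct and is exactly what the paper intends: its entire proof of this theorem is the single sentence that everything follows \emph{mutatis mutandis} from the arguments for scheme \eqref{FEM}. Your observation that the transfer is \emph{not} automatic for the positivity $\partial_t n_{h,k}\ge 0$ is the most valuable part of the proposal: Lemma 3.2 hinges on writing the diffusion as $-\widetilde\Delta_h\Sigma(n_{h,k})$ with the nodal potential $\Sigma(n_{h,k})\in N_h$, so that time differentiation acts through the chain rule at the nodes and never produces a derivative of the diffusion coefficient; for \eqref{FEM-II} the potential $(n_{h,k})^k$ is a genuine piecewise polynomial of degree $k$, and differentiating the scheme in time creates the extra term $k(k-1)\big((n_{h,k})^{k-2}\,\partial_t n_{h,k}\,\nabla n_{h,k},\nabla\overline w\big)$, which you rightly identify as resisting both sign arguments and absorption into the dissipation.

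However, your proposed repair does not close the gap. A discrete comparison principle for the nonlinear ODE system underlying \eqref{FEM-II} is essentially equivalent to quasimonotonicity (nonnegativity of the off-diagonal entries) of the Jacobian of its right-hand side along the trajectory, and that Jacobian contains, besides the benign contribution $k\big((n_{h,k})^{k-1}\nabla\varphi_{\widetilde\a},\nabla\varphi_\a\big)$ whose sign is controlled by the acuteness of the mesh, precisely the indefinite contribution $k(k-1)\big((n_{h,k})^{k-2}\varphi_{\widetilde\a}\nabla n_{h,k},\nabla\varphi_\a\big)$ coming from differentiating the coefficient; so the comparison route runs into the same obstruction you isolated in the energy route rather than circumventing it. In addition, the initial ordering $n_{h,k}(\cdot,\tau)\ge n_{h,k}(\cdot,0)$ does not follow from \eqref{Positivity-Derivative} alone, which only gives $n_{h,k}(\a,\tau)=n_{h,k}(\a,0)+\tau\,\partial_t n_{h,k}(\a,0)+o(\tau)$ with a possibly vanishing first-order term, so even granting the comparison principle the argument would need a further bootstrap. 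As written, then, the positivity assertions and the two $L^1$ estimates that depend on them are not established; to be fair, the paper offers nothing beyond the \emph{mutatis mutandis} claim at this point, and your analysis makes a convincing case that this claim requires more justification than it receives.
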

\begin{proof} Full details of the proof are left to the interested reader since it follows {\it mutatis mutandis} the same arguments as for scheme \eqref{FEM}. 
\end{proof}
\begin{corollary} Under hypotheses $\rm(H1)$-$\rm(H4)$, it follows that

\begin{equation}\label{est:Grad_II}
\sum_{K\in\mathcal{T}_h}\left(\int_{K_{>}} |\partial_{\boldsymbol{x}_i} n^{k}_{h,k}(\boldsymbol{x})|^2+ \int_{K_{<}} |\partial_{{\boldsymbol x}_i} \mathcal{I}_h n^{k}_{h,k}(\boldsymbol{x})|^2 \right) {\rm d}\boldsymbol{x}\le C, 
\end{equation}
where
$$
K_{>}=\left\{ \boldsymbol{x}\in K : \frac{n^{k-1}_{k,h}(\boldsymbol{\xi}_i)}{n^{k-1}_{h,k}(\boldsymbol{x})}>1 \right\}
$$
and 
$$
K_{<}=\left\{ \boldsymbol{x}\in K : \frac{n^{k-1}_{k,h}(\boldsymbol{\xi}_i)}{n^{k-1}_{h,k},(\boldsymbol{x})}<1 \right\},
$$
with $C>0$ being a constant independent of $(h,k)$.
\end{corollary}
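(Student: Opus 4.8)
The plan is to read off \eqref{est:Grad_II} from a single ``product'' energy bound, obtained by testing \eqref{FEM-II} against the interpolant $\mathcal{I}_h((n_{h,k})^k)$, instead of from separate bounds on $\nabla(n_{h,k})^k$ and $\nabla\mathcal{I}_h((n_{h,k})^k)$ (which, unlike for scheme \eqref{FEM}, are \emph{not} individually available here). Write $n=n_{h,k}$ and take $\overline n_h=\mathcal{I}_h(n^k)\in N_h$ in $\eqref{FEM-II}_1$. The time-derivative term $(\partial_t n,\mathcal{I}_h(n^k))_h$ is nonnegative because $\partial_t n\ge0$ and $n\ge0$ at every node by the positivity established in the preceding theorem; the viscous term $\nu(\nabla n,\nabla\mathcal{I}_h(n^k))$ is nonnegative on acute meshes by \eqref{off-diagonal} with $D=I_d$ together with the monotonicity of $s\mapsto s^k$; and the right-hand side is bounded by $G(0)\,\|n^{k-1}\|_{L^\infty(\Omega)}\,\|n\|_h^2\le G(0)\,P_{\rm max}\,\|n\|_h^2\le C$ using $n^{k-1}\le P_{\rm max}$, the equivalence \eqref{Equivalence-L2}, and the $L^\infty(0,T;L^2(\Omega))$ bound. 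Hence, uniformly in $(h,k)$ and in $t$,
$$
k\,(n^{k-1}\nabla n,\nabla\mathcal{I}_h(n^k))=(\nabla(n^k),\nabla\mathcal{I}_h(n^k))\le C.
$$

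Next I would decompose this inner product element by element and direction by direction. On each $K$ the function $n$ is affine, so $\nabla n|_K$ is constant; setting $q_i=\partial_{\boldsymbol x_i}n|_K$ gives the pointwise identity $\partial_{\boldsymbol x_i}(n^k)(\boldsymbol x)=k\,n^{k-1}(\boldsymbol x)\,q_i$, whereas the interpolant gradient is constant and, by the mean value theorem along the coordinate-aligned edge through $\boldsymbol a_0$ that defines the entry in \eqref{Descrite_Derivative}, equals $\partial_{\boldsymbol x_i}\mathcal{I}_h(n^k)|_K=k\,n^{k-1}(\boldsymbol\xi_i)\,q_i$ with the same $\boldsymbol\xi_i$ as in \eqref{ident-D}. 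Therefore the product
$$
\partial_{\boldsymbol x_i}(n^k)(\boldsymbol x)\,\partial_{\boldsymbol x_i}\mathcal{I}_h(n^k)=k^2\,n^{k-1}(\boldsymbol x)\,n^{k-1}(\boldsymbol\xi_i)\,q_i^2\ge0
$$
is nonnegative and dominates the square of whichever factor carries the smaller $n^{k-1}$-weight: on $K_{>}$, where $n^{k-1}(\boldsymbol\xi_i)>n^{k-1}(\boldsymbol x)$, one gets $|\partial_{\boldsymbol x_i}(n^k)|^2\le \partial_{\boldsymbol x_i}(n^k)\,\partial_{\boldsymbol x_i}\mathcal{I}_h(n^k)$, while on $K_{<}$, where $n^{k-1}(\boldsymbol\xi_i)<n^{k-1}(\boldsymbol x)$, one gets $|\partial_{\boldsymbol x_i}\mathcal{I}_h(n^k)|^2\le \partial_{\boldsymbol x_i}(n^k)\,\partial_{\boldsymbol x_i}\mathcal{I}_h(n^k)$.

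Integrating these two inequalities over $K_{>}$ and $K_{<}$ respectively, summing over $K\in\mathcal{T}_h$, and discarding the nonnegative contributions from the remaining region and from the other coordinate directions, I obtain
$$
\sum_{K\in\mathcal{T}_h}\Big(\int_{K_{>}}|\partial_{\boldsymbol x_i}(n^k)|^2+\int_{K_{<}}|\partial_{\boldsymbol x_i}\mathcal{I}_h(n^k)|^2\Big)\le(\nabla(n^k),\nabla\mathcal{I}_h(n^k))\le C,
$$
which is \eqref{est:Grad_II}. The step I expect to be the crux is the identity $\partial_{\boldsymbol x_i}\mathcal{I}_h(n^k)=k\,n^{k-1}(\boldsymbol\xi_i)\,\partial_{\boldsymbol x_i}n$: it is precisely where the coordinate-aligned edge structure is used, so that each coordinate derivative of $\mathcal{I}_h(n^k)$ reduces to a single difference quotient whose intermediate point $\boldsymbol\xi_i$ is the very one appearing in \eqref{ident-D}, and so that the cross-direction terms in $(\nabla(n^k),\nabla\mathcal{I}_h(n^k))$ are separately nonnegative and may be dropped. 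Everything else is the sign bookkeeping of the $K_{>}/K_{<}$ split, whose sole purpose is to convert control of the \emph{product} $\partial_{\boldsymbol x_i}(n^k)\,\partial_{\boldsymbol x_i}\mathcal{I}_h(n^k)$ into one-sided control of a square on each piece, since for scheme \eqref{FEM-II} neither square is bounded on its own.
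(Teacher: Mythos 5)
Your proposal is correct and follows essentially the same route as the paper: test $\eqref{FEM-II}_1$ with $\mathcal{I}_h((n_{h,k})^k)$, discard the nonnegative time-derivative and viscous terms, bound the right-hand side via the discrete maximum principle, and use the elementwise identity $\partial_{\boldsymbol{x}_i}\mathcal{I}_h(n^k_{h,k})|_K=k\,n^{k-1}_{h,k}(\boldsymbol{\xi}_i)\,\partial_{\boldsymbol{x}_i}n_{h,k}|_K$ to convert the bound on the product $(\nabla n^k_{h,k},\nabla\mathcal{I}_h(n^k_{h,k}))$ into one-sided bounds on the squares over $K_{>}$ and $K_{<}$. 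The only cosmetic difference is that the paper rewrites the product as the weighted sum $\frac{n^{k-1}(\boldsymbol{\xi}_i)}{n^{k-1}(\boldsymbol{x})}|\partial_{\boldsymbol{x}_i}n^k|^2+\frac{n^{k-1}(\boldsymbol{x})}{n^{k-1}(\boldsymbol{\xi}_i)}|\partial_{\boldsymbol{x}_i}\mathcal{I}_h n^k|^2$ before comparing ratios to $1$, whereas you dominate each square by the product directly; these are the same estimate.
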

\begin{proof} Choose $\bar n_h=\mathcal{I}_h(n^k_{h,k})$ to get 
\begin{equation}\label{co4.1-lab1}
(\partial_t n_{h,k}, \mathcal{I}_h(n^k_{h,k}) )_h +((n_{h,k})^{k-1} \nabla n_{h,k}, \nabla \mathcal{I}_h(n^k_{h,k}) )+\nu (\nabla n_{h,k},  \nabla \mathcal{I}_h(n^k_{h,k}))  =(G(p(n_{h,k})) n_{h,k}, \mathcal{I}_h(n^k_{h,k}))_h.
\end{equation}
It follows immediately from  \eqref{lm1:DMP-nk} and \eqref{lm3.2:bound-derivative-n} that  
\begin{equation}\label{co4.1-lab2}
(\partial_t n_{h,k}, \mathcal{I}_h(n^k_{h,k}) )_h\ge  0,
\end{equation}
and from  \eqref{Descrite_Derivative} that
\begin{equation}\label{co4.1-lab3}
\nu (\nabla n_{h,k},  \nabla \mathcal{I}_h(n^k_{h,k}))=\nu (\mathcal{D}(n_{h,k})\nabla n_{h,k},\nabla n_{h,k})\ge 0.
\end{equation}
Combining \eqref{co4.1-lab1}-\eqref{co4.1-lab3} yields on noting \eqref{lm1:DMP-n} and \eqref{lm1:DMP-nk} that
$$
((n_{h,k})^{k-1} \nabla n_{h,k}, \nabla \mathcal{I}_h(n^k_{h,k}) )\le G(0) |\Omega| N_{\rm max}(k)^2 P_{\rm max}. 
$$ 
Finally, we invoke again \eqref{Descrite_Derivative} and recall \eqref{ident-D} to set 
$$
\begin{array}{rcl}
k((n_{h,k})^{k-1} \nabla n_{h,k}, \nabla \mathcal{I}_h(n^k_{h,k}) )&=&(\nabla n_{h,k}^k, \nabla \mathcal{I}_h(n^k_{h,k}) )
\\
&=&\displaystyle
\sum_{K\in\mathcal{T}_h}\int_{K} \left(\frac{n^{k-1}_{k,h}(\boldsymbol{\xi}_i)}{n^{k-1}_{h,k}(\boldsymbol{x})} |\partial_{\boldsymbol{x}_i} n^{k}_{h,k}(\boldsymbol{x})|^2+\frac{n^{k-1}_{h,k}(\boldsymbol{x})}{n^{k-1}_{k,h}(\boldsymbol{\xi}_i)} |\partial_{{\boldsymbol x}_i} \mathcal{I}_h n^{k}_{h,k}(\boldsymbol{x})|^2 \right) {\rm d}\boldsymbol{x}.
\end{array} 
$$
This completes the proof via the definitions of $K_<$ and $K_>$.
\end{proof}
\begin{remark} Unfortunately, convergence for  scheme \eqref{FEM-II} is not clear because estimate \eqref{est:Grad_II} does not provide enough control over the gradient of $\{n_{h,k}^k\}_{h,k}$ or $\{\mathcal{I}_h n_{h,k}^k\}_{h,k}$ in order  to obtain compactness and therefore to pass to the limit as  $( k, h )\to (0,+\infty)$.
\end{remark}
\section{Numerical simulation}
\subsection{Temporal integration}
It is assumed here for simplicity that we have a uniform partition of $[0,T]$ into $M$ pieces, with time step size $\tau= T/M$ and the time values $(t_m = m \tau)_{m=0}^M$. To simplify the notation let us denote $\delta_t n^{m+1}=\dfrac{n^{m+1}- n^m}{\tau}$.

First we present a first-order time integration for scheme (\ref{FEM-II}).
\begin{center}
\noindent\fbox{
\begin{minipage}{0.8\textwidth}
\textbf{Algorithm 1}: Linear semi-implicit  time-stepping scheme
\end{minipage}
}
\noindent\fbox{
\begin{minipage}{0.8\textwidth}
\textbf {Step $(m+1)$}: Given $n^{m}_{h,k}\in N_{h}$, find $n^{m+1}_{h,k}\in N_{h}$ solving the algebraic linear system
\begin{equation}\label{FEM-Euler}
\left\{
\begin{array}{rcl}
\displaystyle
(\delta_t n_{h,k}^{m+1}, \overline n_h)_h+  k( (n^{m}_{h,k})^{k-1}\nabla n^{m+1}_{h,k}, \nabla\overline n_h) &&
\\
\displaystyle
+\nu (\nabla n^{m+1}_{h,k}, \nabla\overline n_h) &=&(G(p(n^{m}_{h,k})) n_{h,k}^m, \overline n)_h, 
\end{array}
\right.
\end{equation}
for all $\overline n_h \in N_h $. 
\end{minipage}
}
\end{center}

\subsection{Computational experiments} In this section, we present several numerical experiments to test the algorithm presented herein. To do this, we consider the evolution of problem \eqref{Cell-Model}-\eqref{Initial-Condition} with 
$$
n_0(x,y)=\alpha \, e^{-(x^2+y^2)}
$$
on the computational domain $\Omega=(-10,10)\times(-10, 10)$ with $\alpha>0$. 

In the numerical setting, we construct a structured triangulation partitioning the edges of square into $100$ subintervals, corresponding  with the mesh size $h=0.12582843$ and the time step size is $\tau=10^{-5}$. The choice of the time step $\tau$ is such that it helps to mitigate the possibly numerical deviation of the $d$-simplexes $K\in\mathcal{T}_h$ from the right-angled structure. The resulting matrix is strictly diagonally dominant.     

Our intention is to illustrate the behavior exhibited by the solution to problem \eqref{Cell-Model}-\eqref{Initial-Condition} when the diffusion coefficient $\nu$, the parameter $k$ and the homeostatic pressure $P_{\rm max}$ vary. 

We will set $\alpha$ and $P_{\rm max}$ to be $1$ and $k$ to be $100$ if not stated otherwise. Moreover, we consider 
$$
G(p)=\frac{200}{\pi }\arctan (4(P_{\rm max} - p)_{+}),
$$

\subsubsection{Analysis of the effect of $\alpha$ (contraction/dilation coefficient of the initial datum)} In this test we choose $\alpha=0.5$ and $1$.  We are interested in comparing the evolution of the density $n$ and the pressure $p(n)$ when the maximum of the initial density takes different values. In particular, we have for $\alpha=0.5$ that the maximum value of $n_0$ occurs only at the point $(0,0)$ and is $0.5$, hence    $N_{\rm max}(k)$ remains below of $1$. We thus observe that the maximum increases without modifying essentially the exponential shape of the initial datum $n_0$ until reaches $N_{\rm max}(k)=1$. Once the density takes the value $1$ at $t=0.01583$ the measure of points at which the density reaches the maximum grows radially around $(0,0)$ due to the fact that the pressure starts increasing and pushes forward the tumor cells. Then the exponential structure of the initial datum $n_0$ becomes a traveling wave shape which moves outwards as $t$ increases.  This behavior causes that the evolution of the interface is delayed concerning the case $\alpha=1$ as shown in Figures \ref{fig1:alpha} and \ref{fig2:alpha} since the maximum value $1$ is reached from the beginning.

Figure \ref{fig3:alpha} represents the difference between the density and the pressure at times $t=0.1$, $0.2$, $0.3$ and $0.4$, and indicates that the pressure is responsible for the advance of the tumor cells which is deduced from the annulus shape of the difference. 
\begin{figure}[h!]
\centering
\includegraphics[scale=0.12]{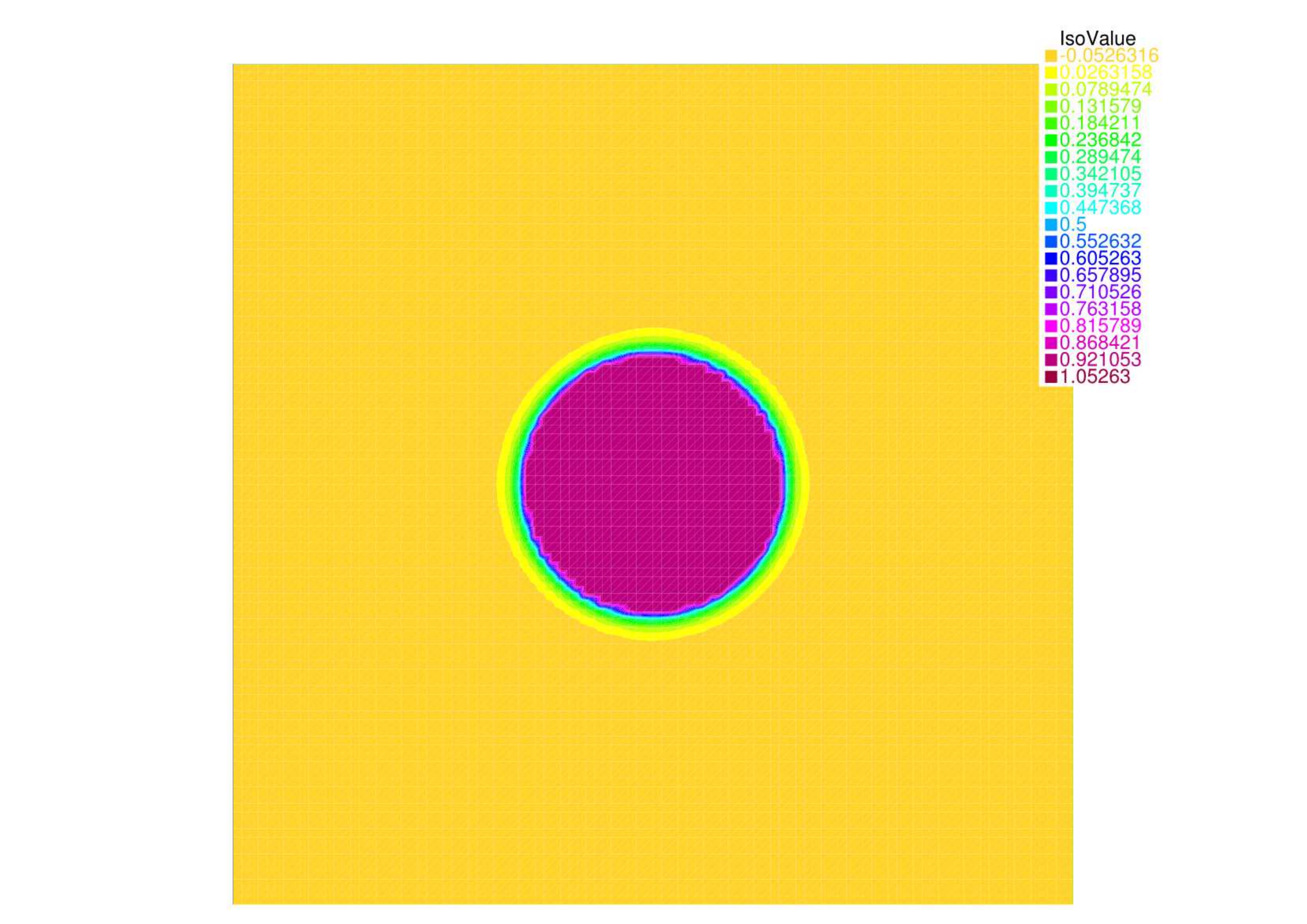}
\includegraphics[scale=0.12]{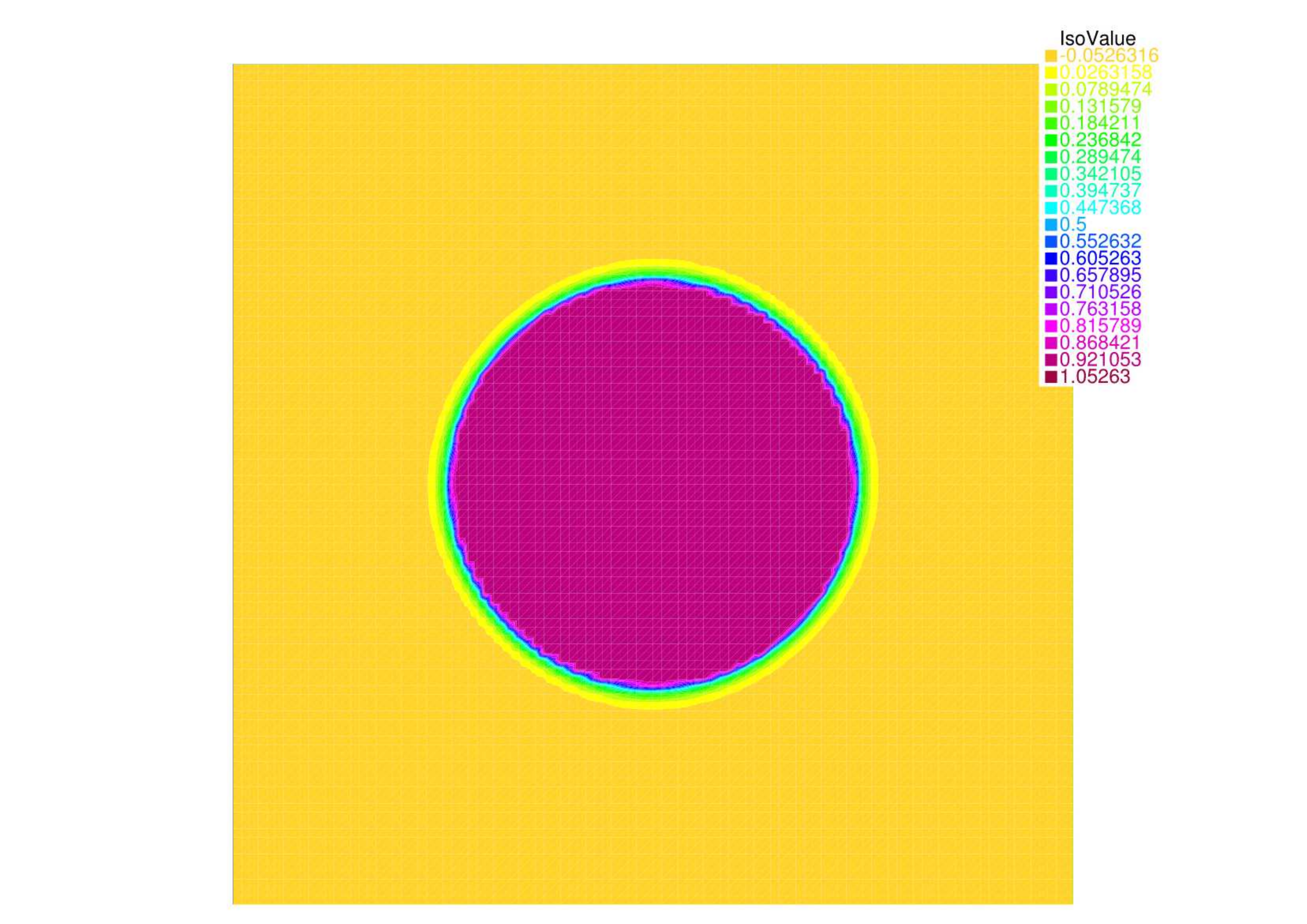}
\includegraphics[scale=0.12]{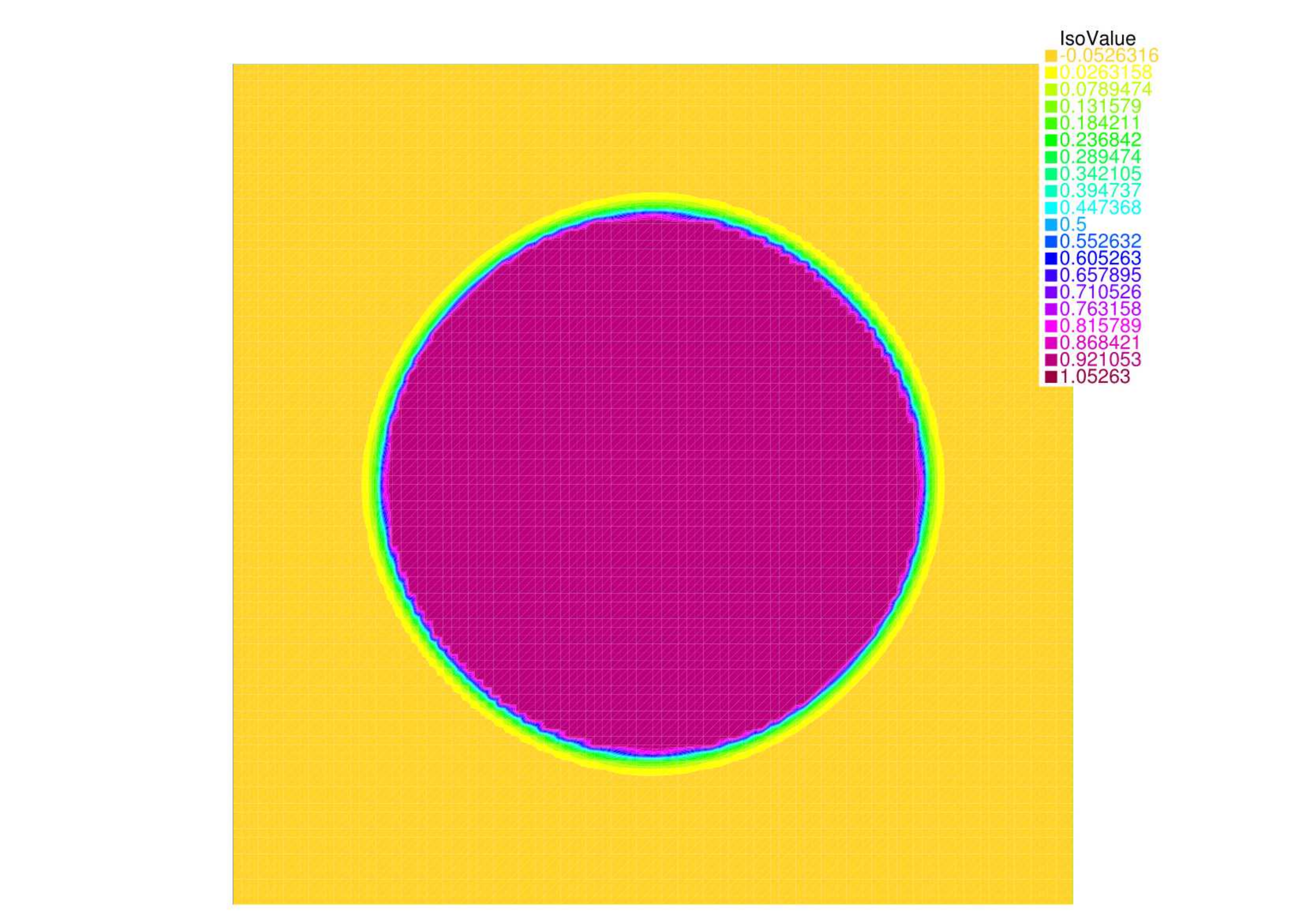}
\includegraphics[scale=0.12]{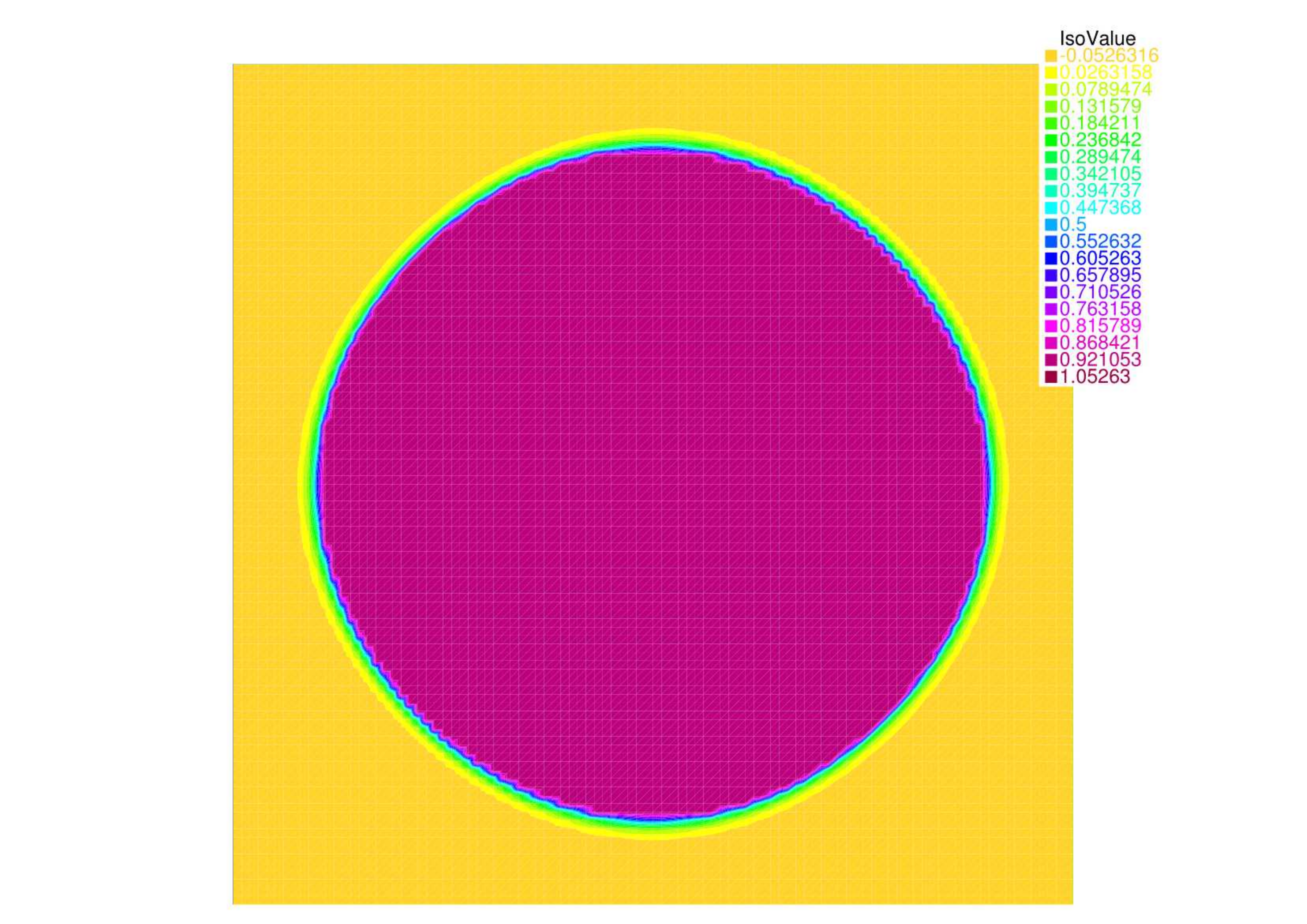}
\\
\includegraphics[scale=0.12]{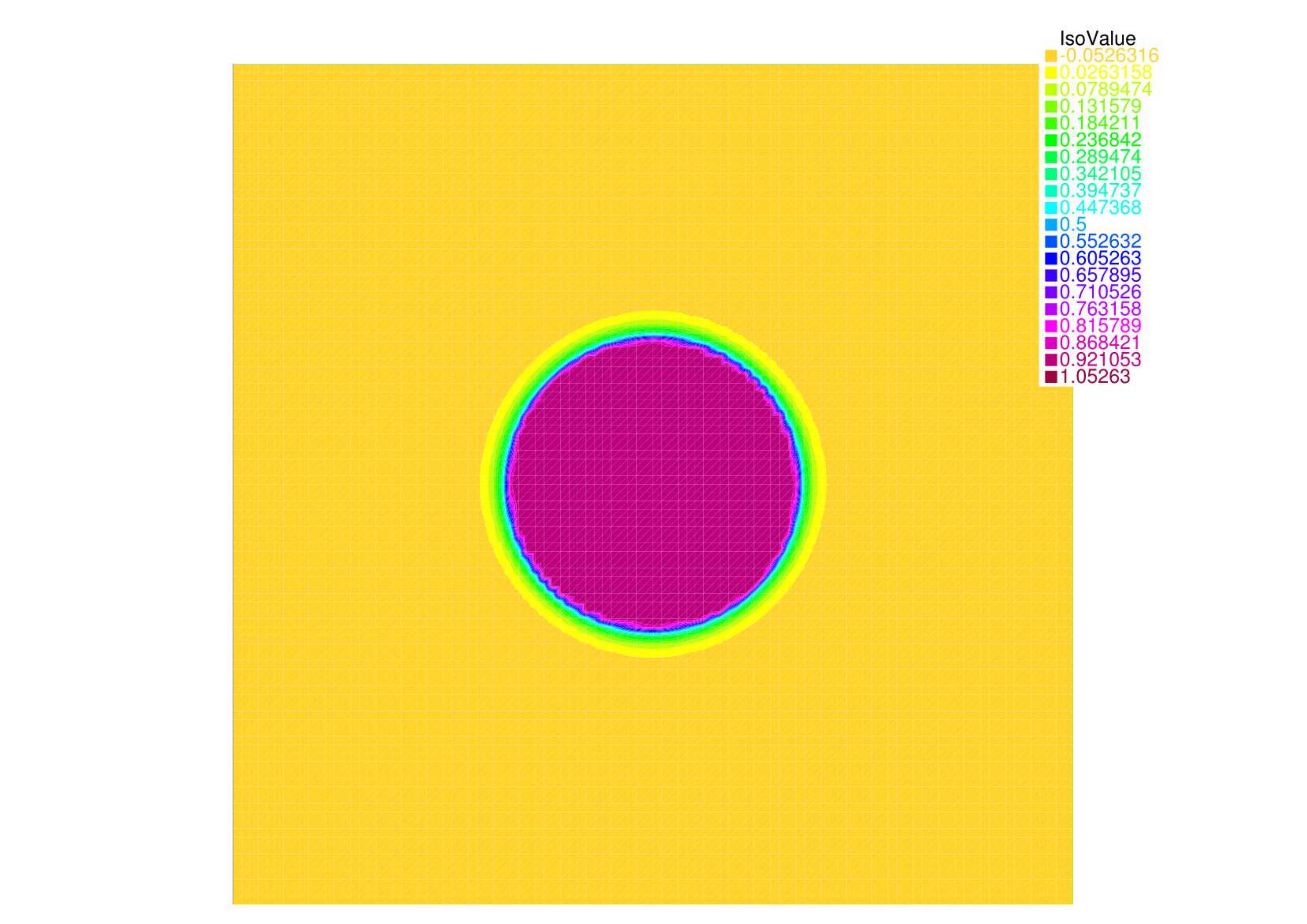}
\includegraphics[scale=0.12]{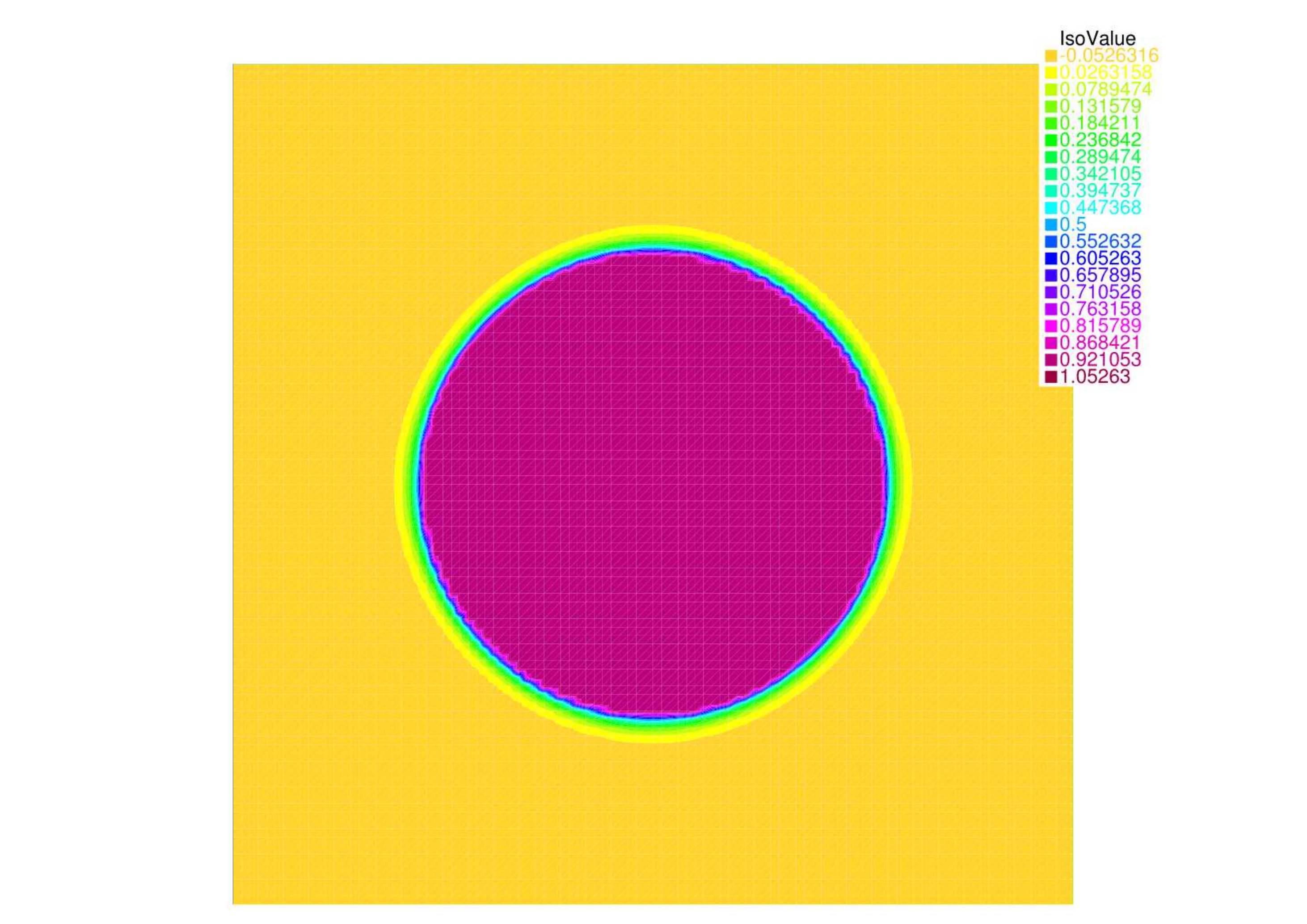}
\includegraphics[scale=0.12]{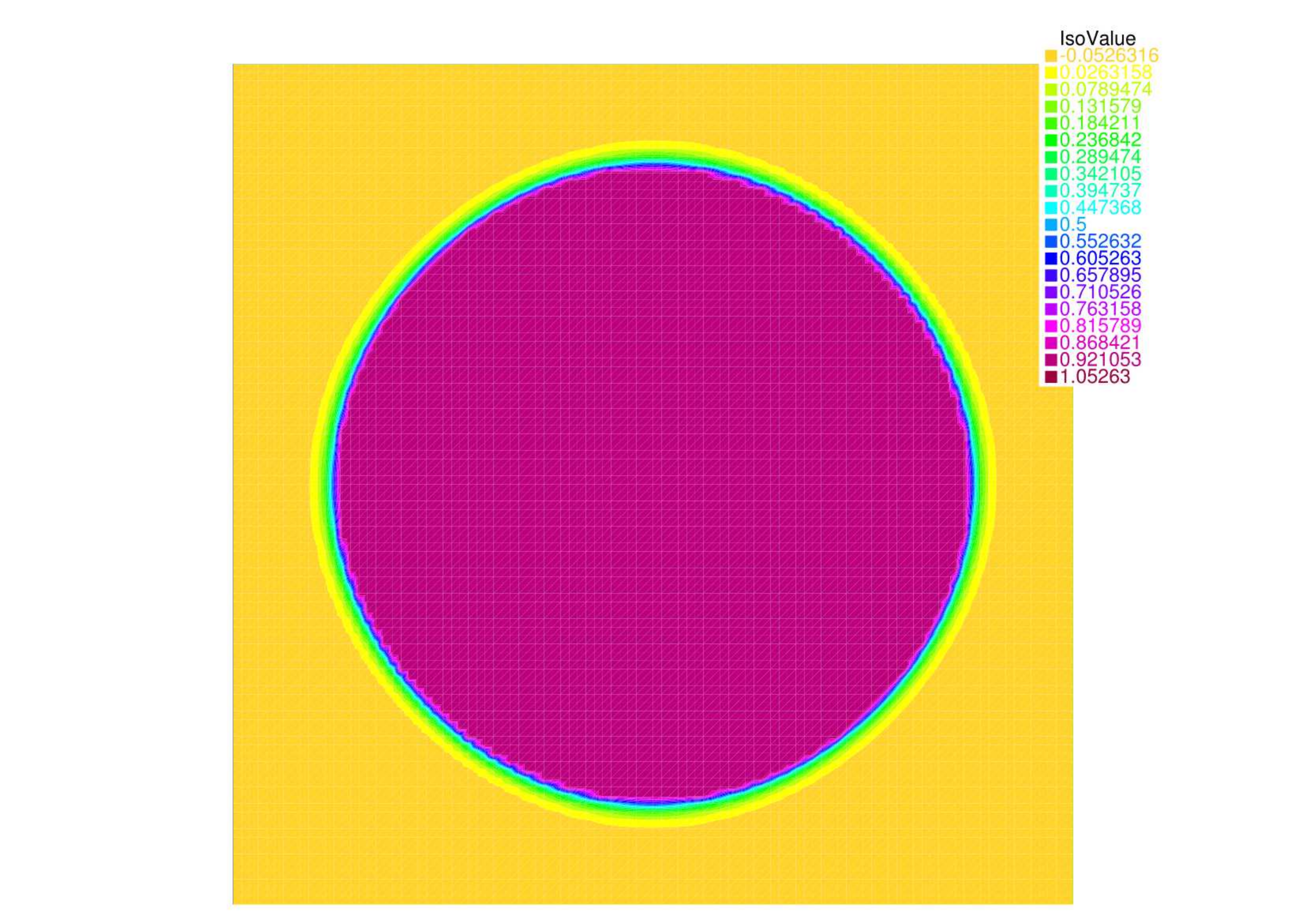}
\includegraphics[scale=0.12]{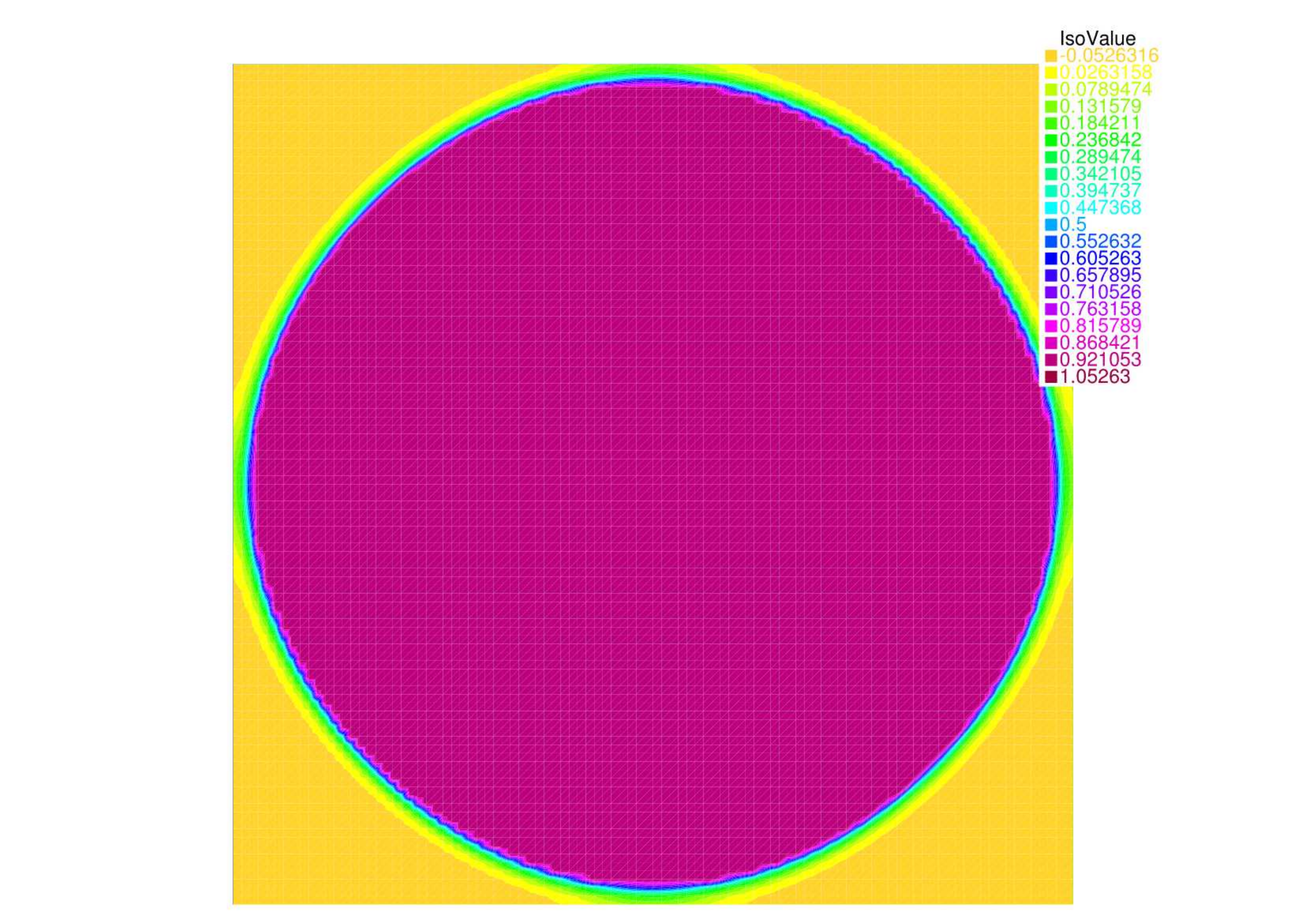}
\caption{Evolution of the density at times $t = 0.1, 0.2, 0.3, 0.4$ for $\alpha=0.5$ (top) and $1$ (bottom).}
\label{fig1:alpha}
\end{figure}

\begin{figure}[h!]
\centering
\includegraphics[scale=0.12]{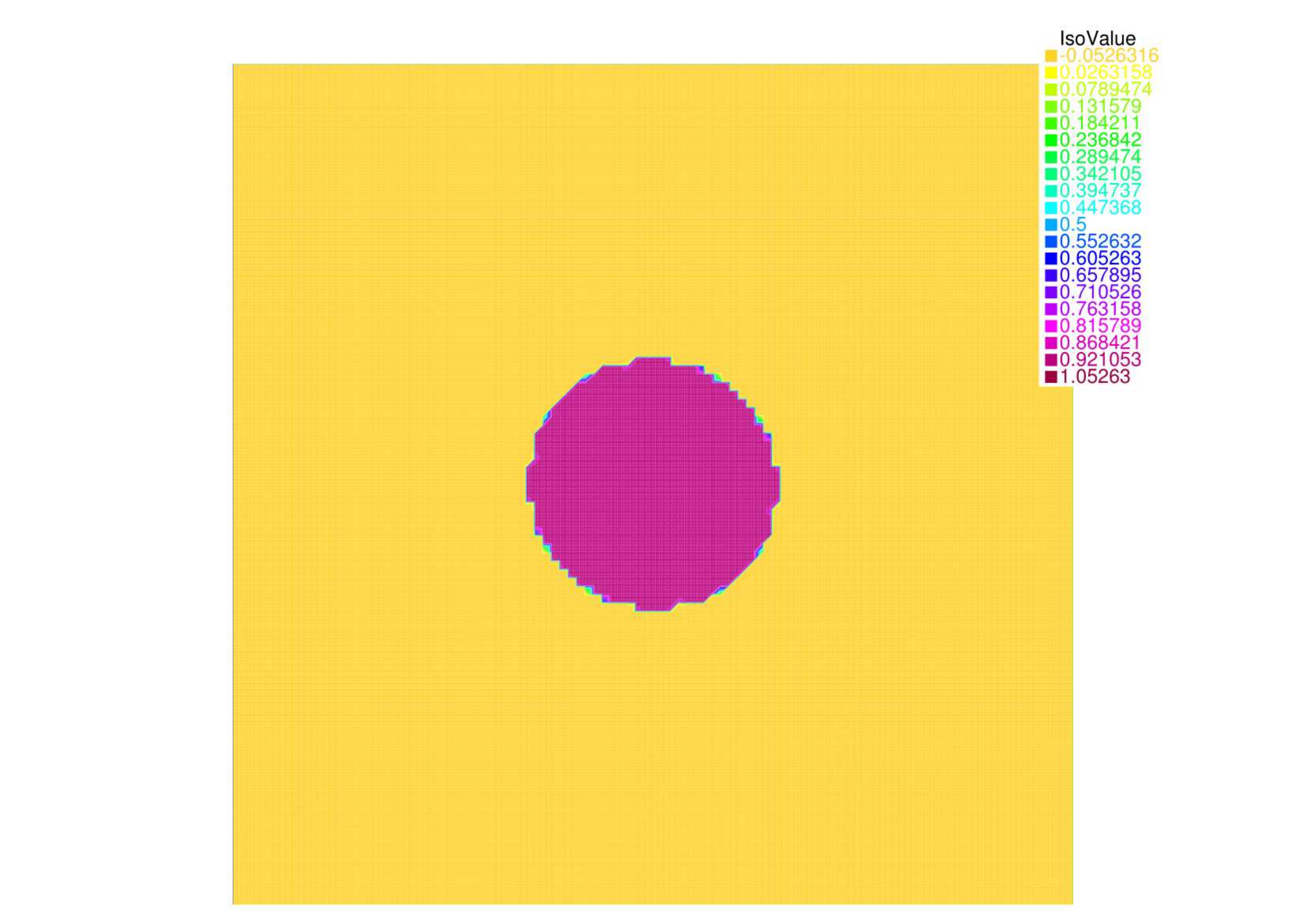}
\includegraphics[scale=0.12]{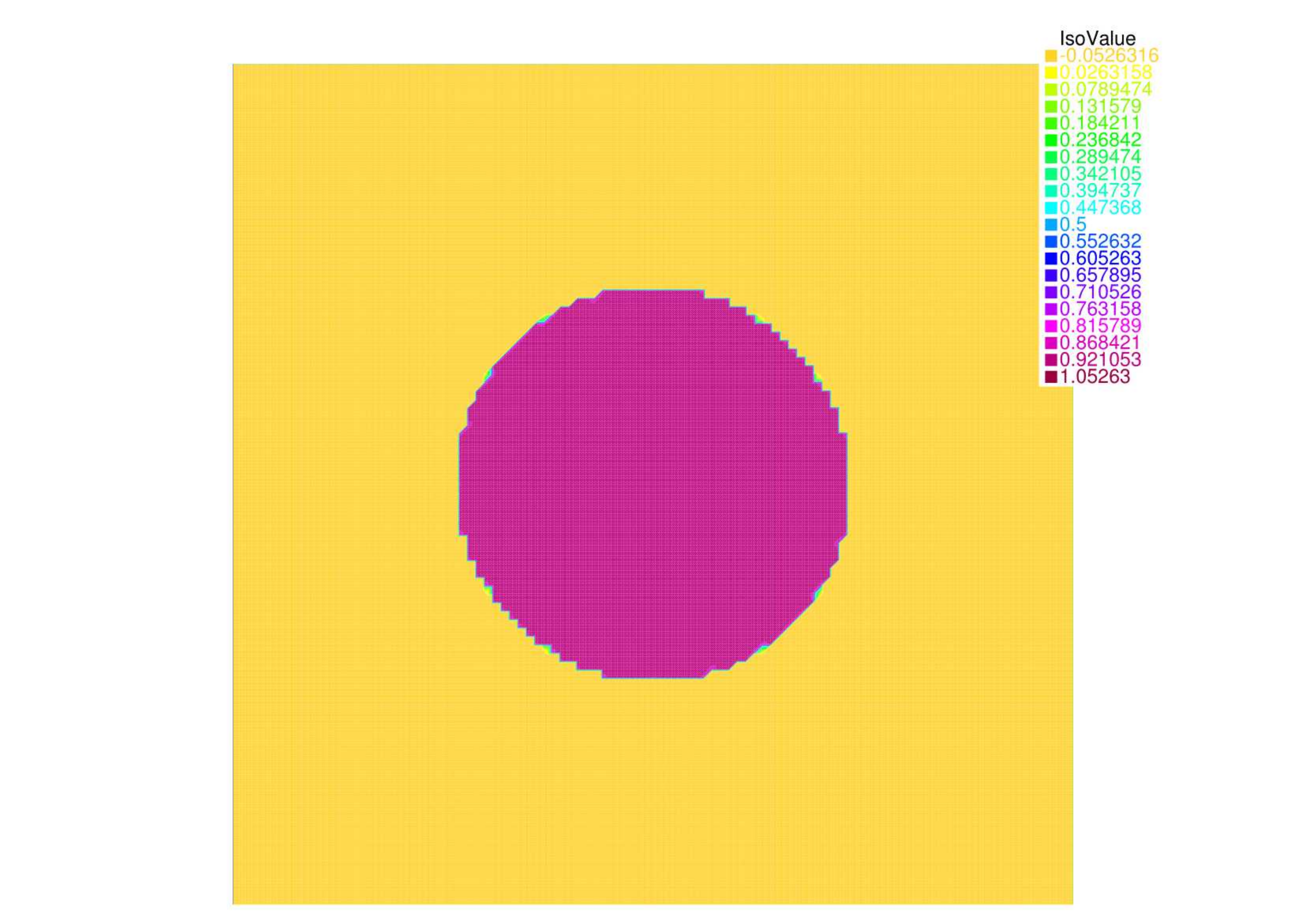}
\includegraphics[scale=0.12]{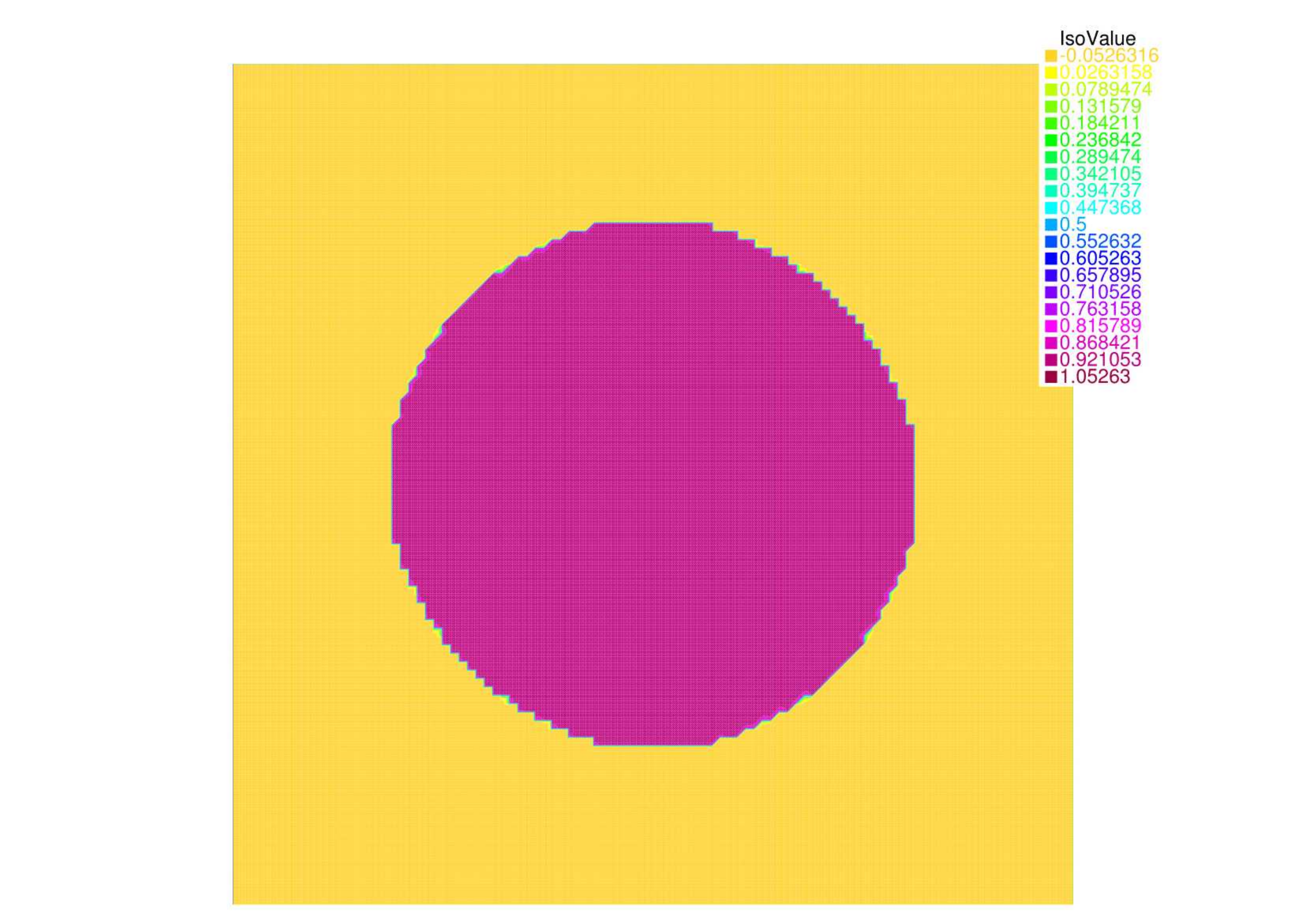}
\includegraphics[scale=0.12]{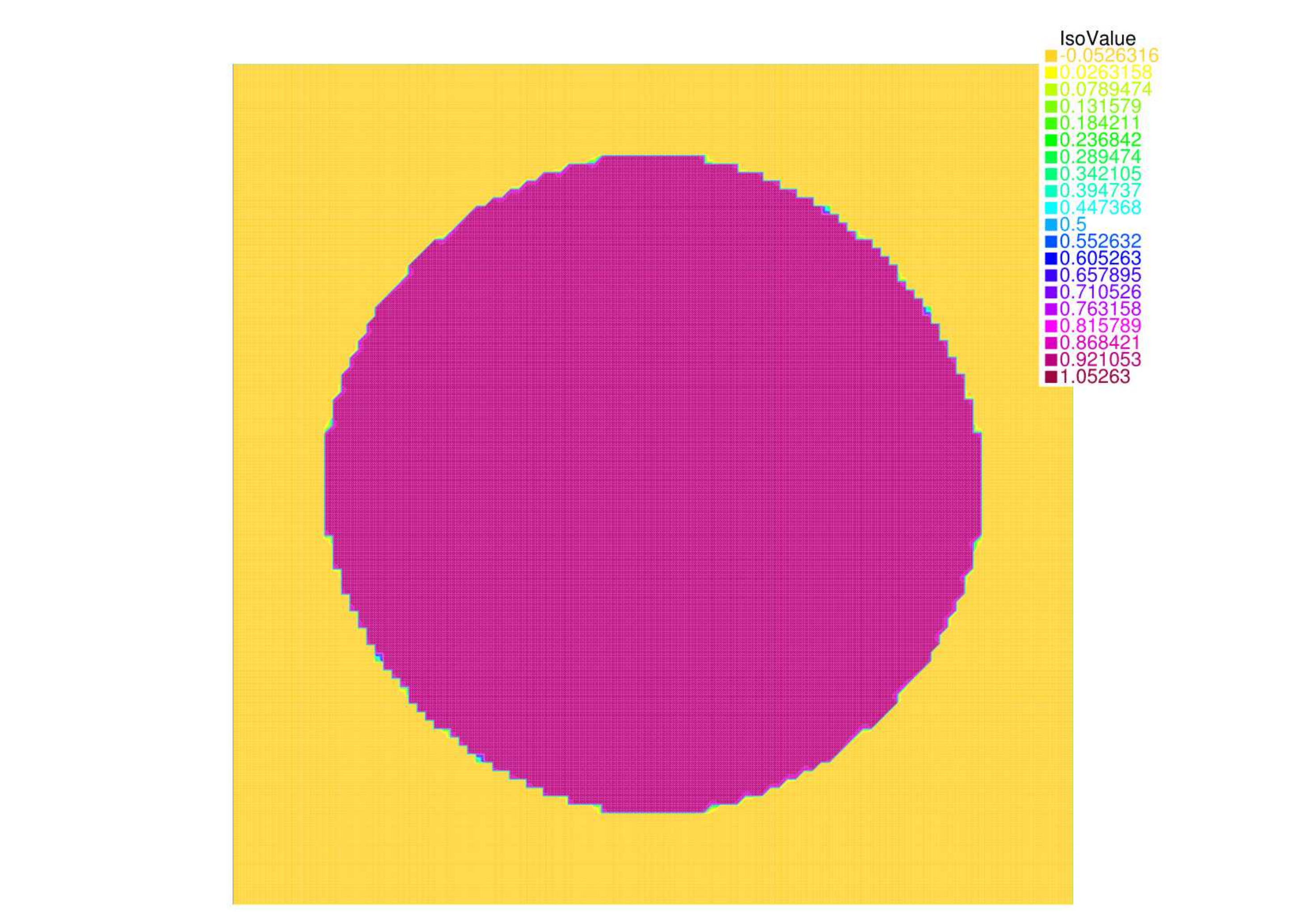}
\\
\includegraphics[scale=0.12]{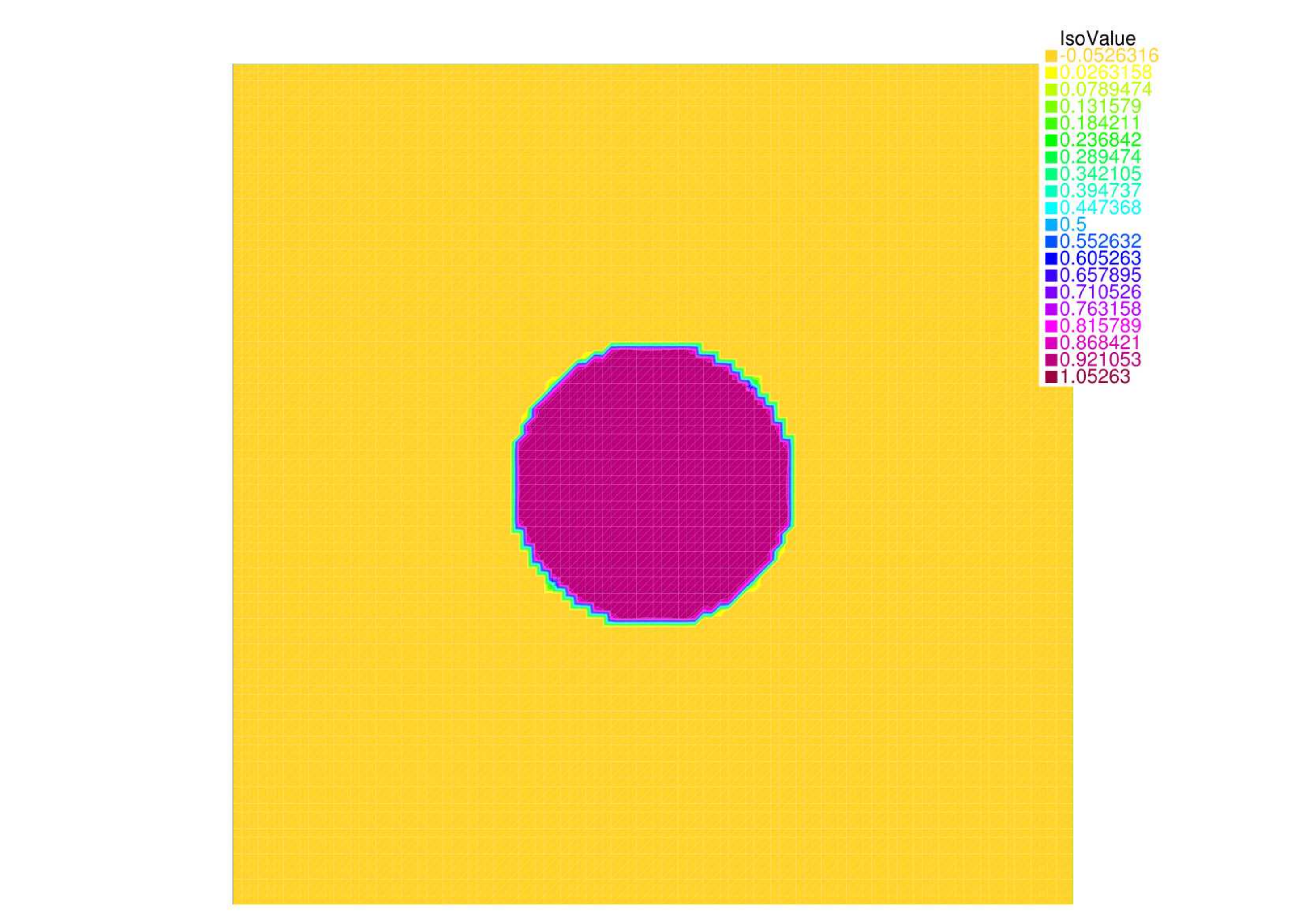}
\includegraphics[scale=0.12]{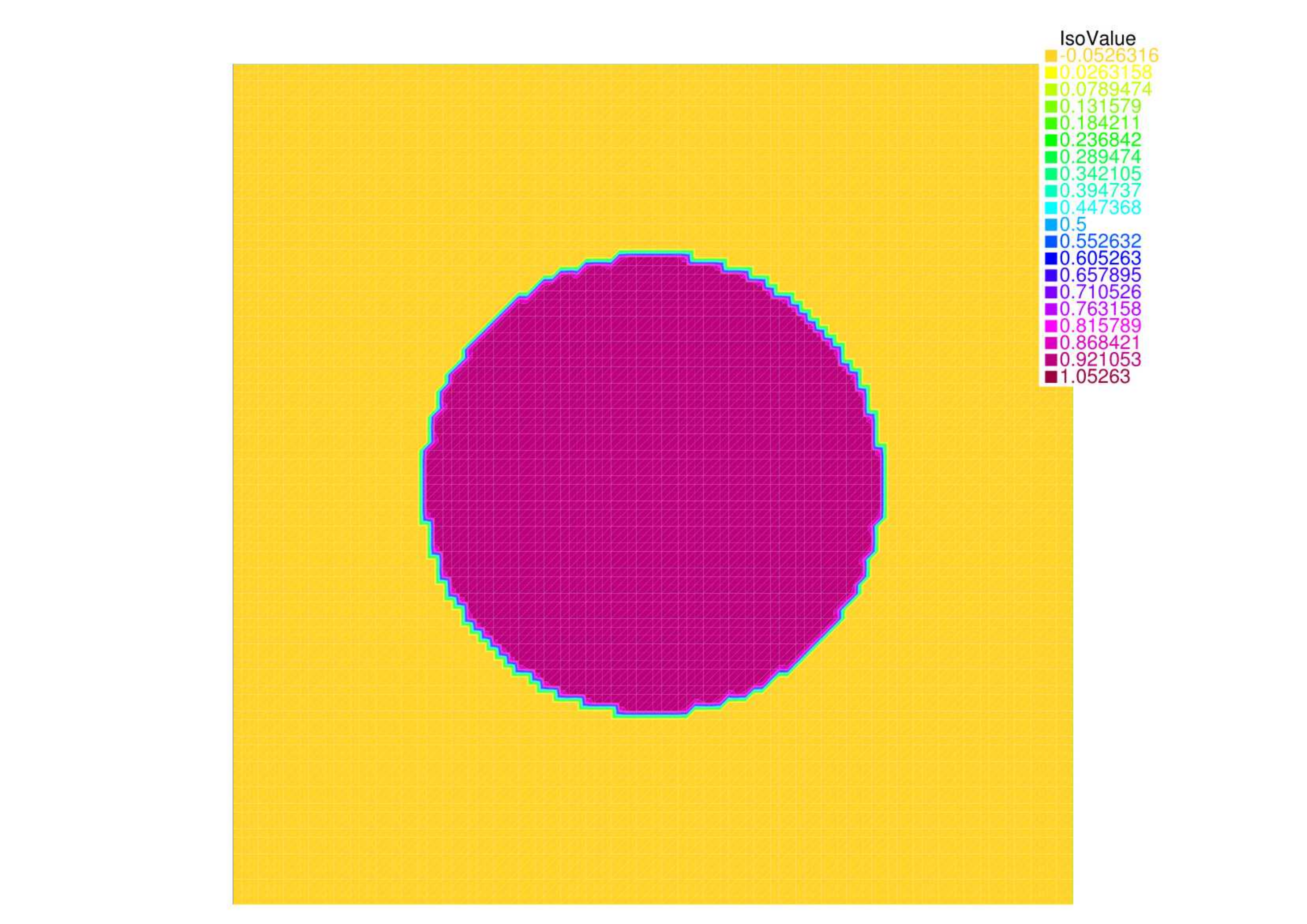}
\includegraphics[scale=0.12]{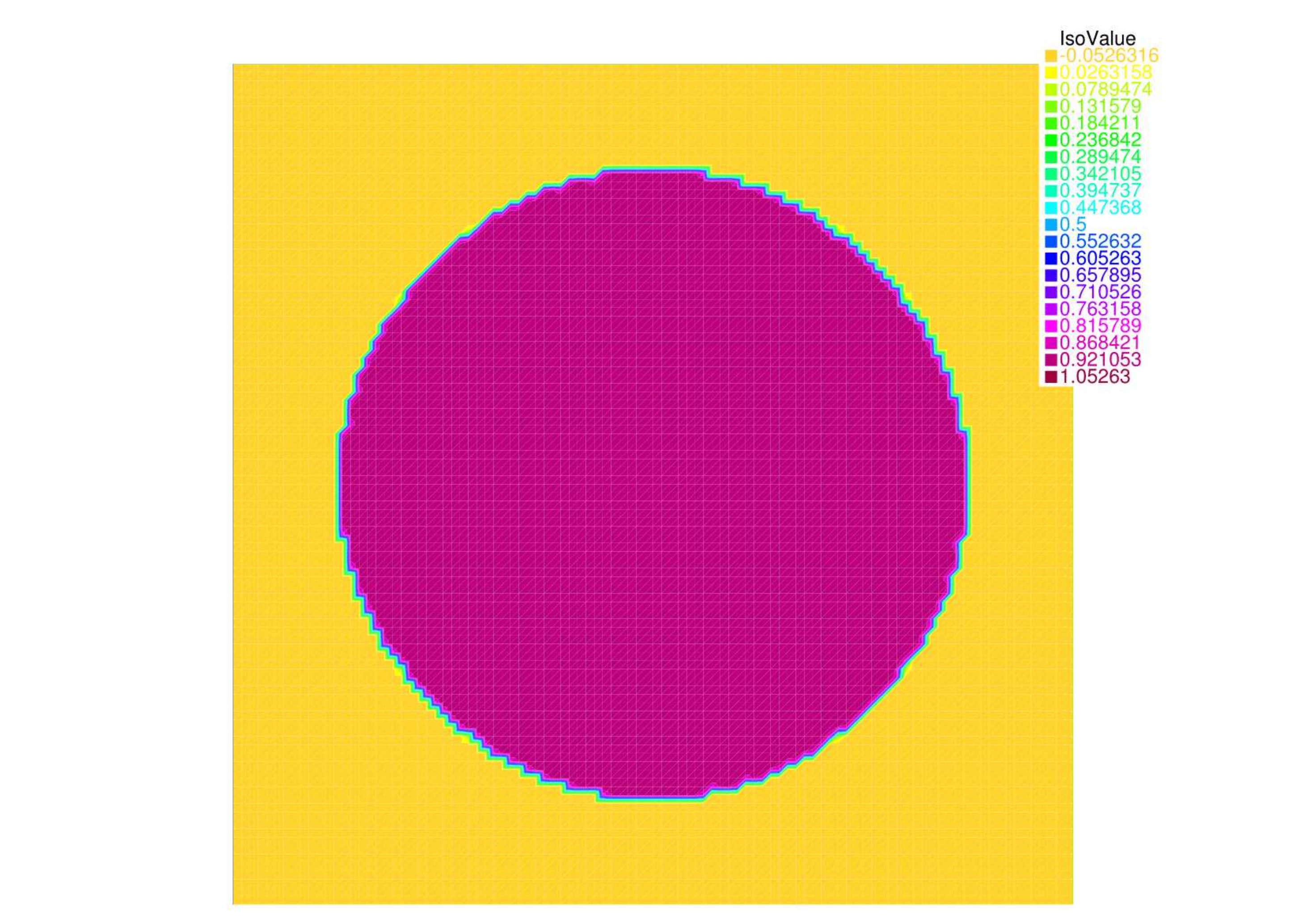}
\includegraphics[scale=0.12]{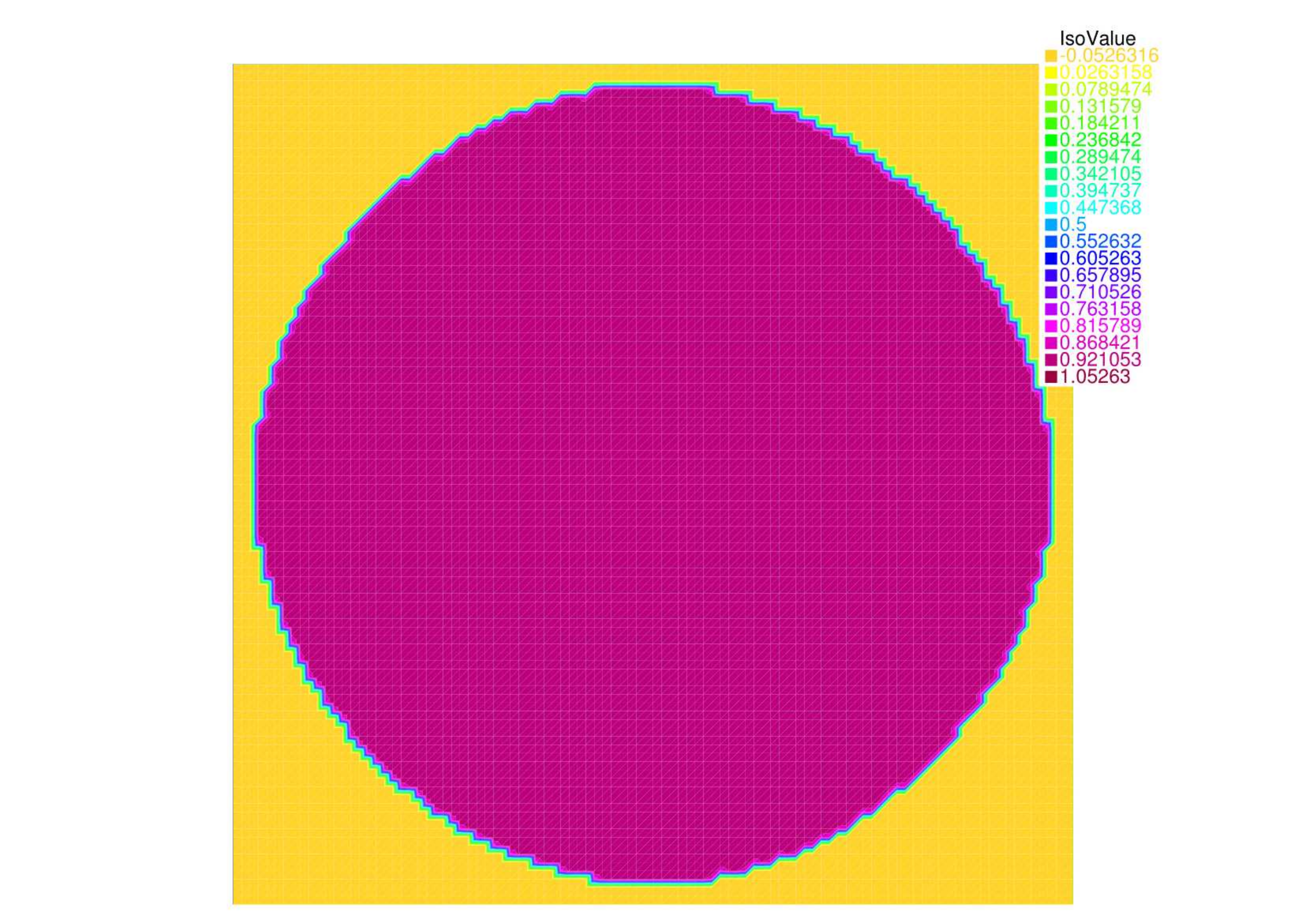}
\caption{Evolution of the pressure at times $t = 0.1, 0.2, 0.3, 0.4$ for $\alpha=0.5$ (top) and $\alpha=1$ (bottom). }
\label{fig2:alpha}
\end{figure}

\begin{figure}[h!]
\includegraphics[scale=0.12]{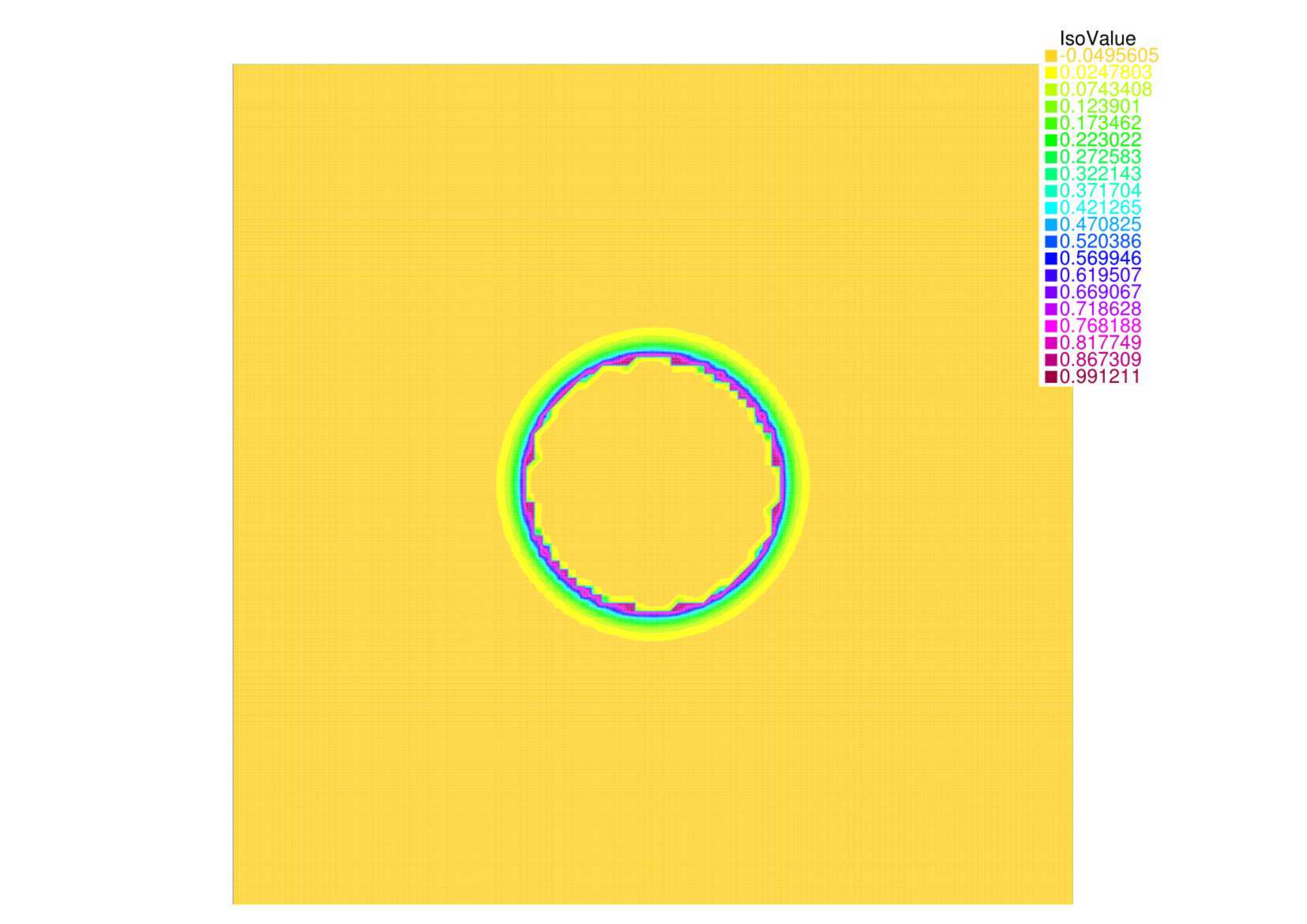}
\includegraphics[scale=0.12]{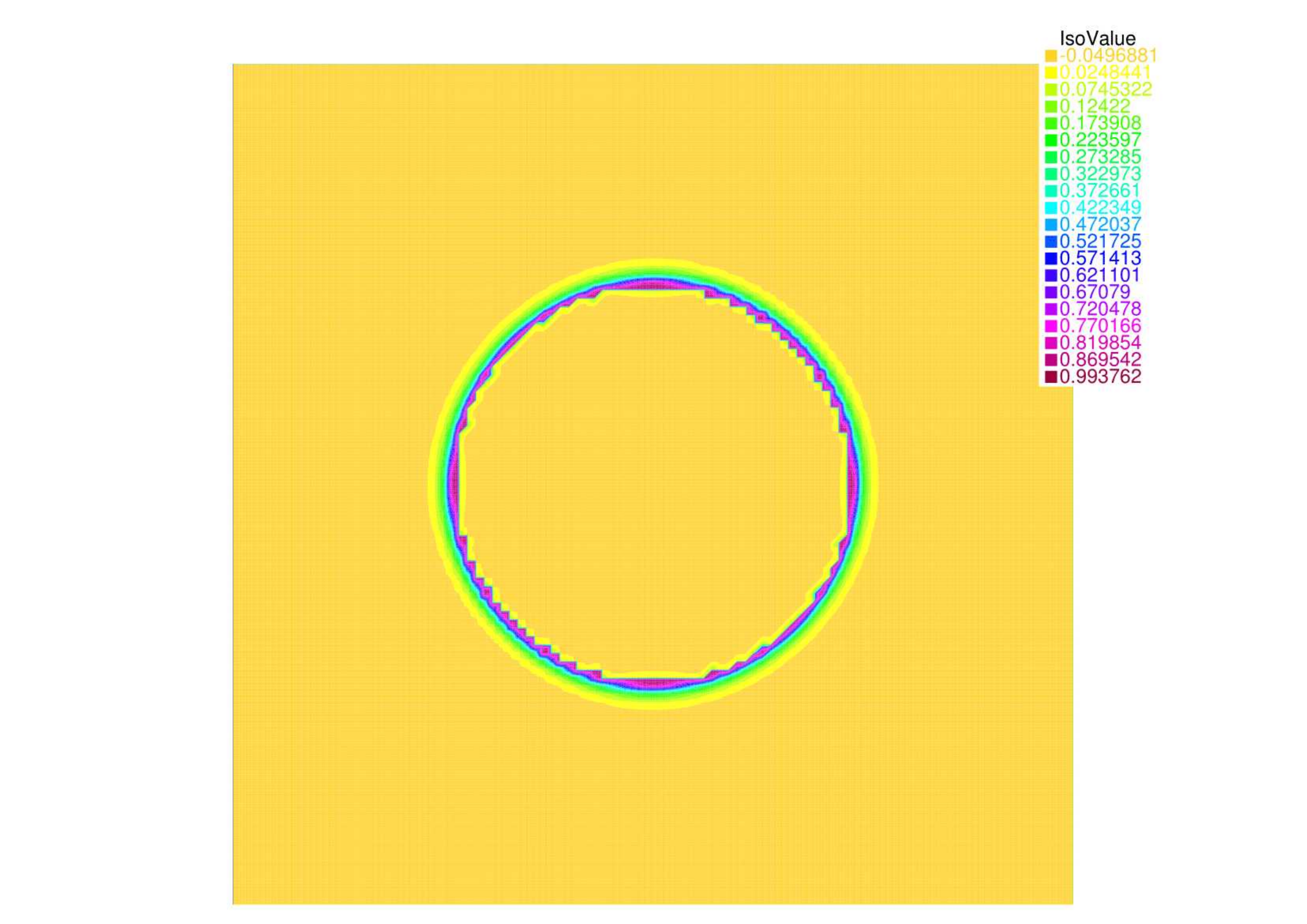}
\includegraphics[scale=0.12]{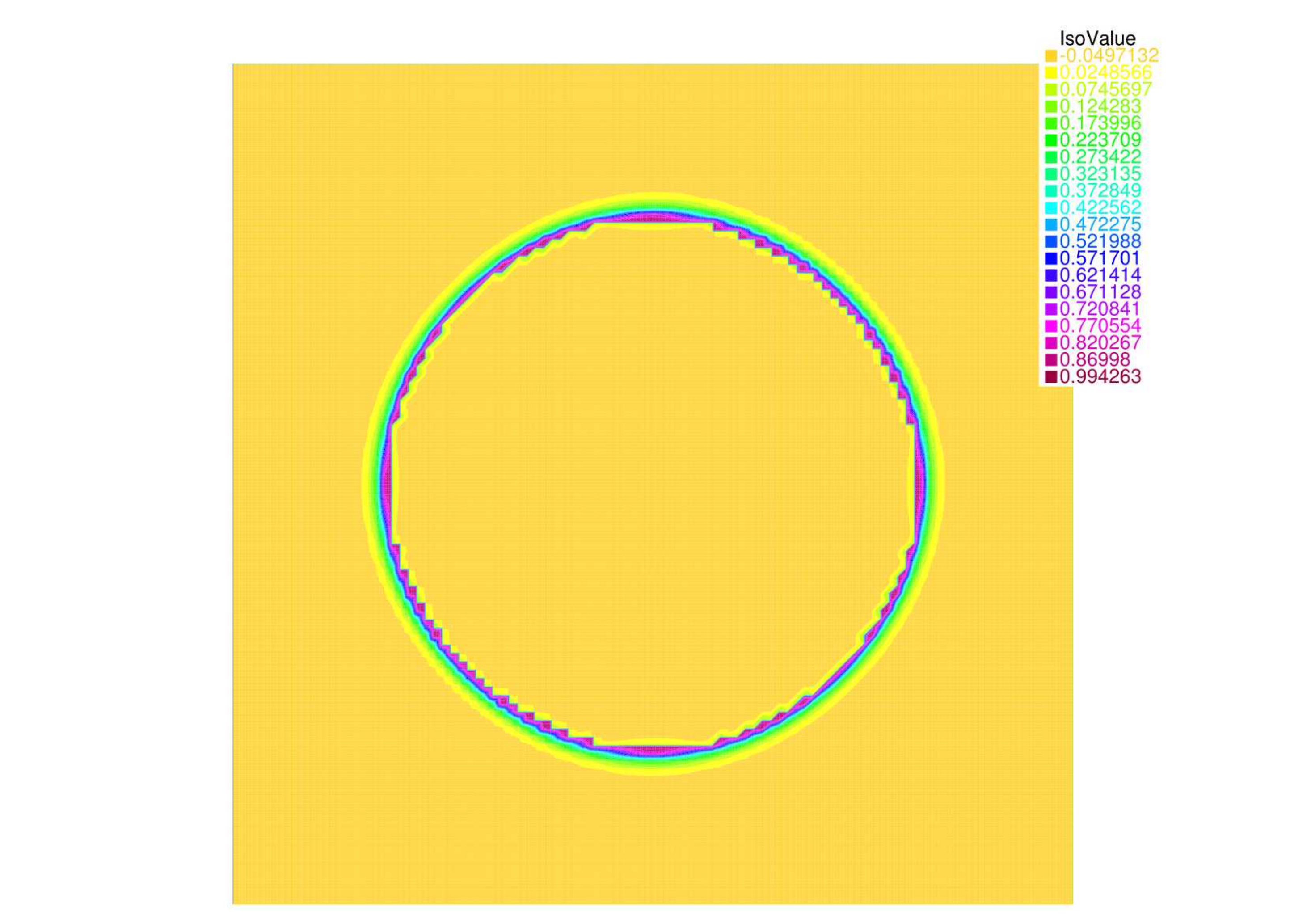}
\includegraphics[scale=0.12]{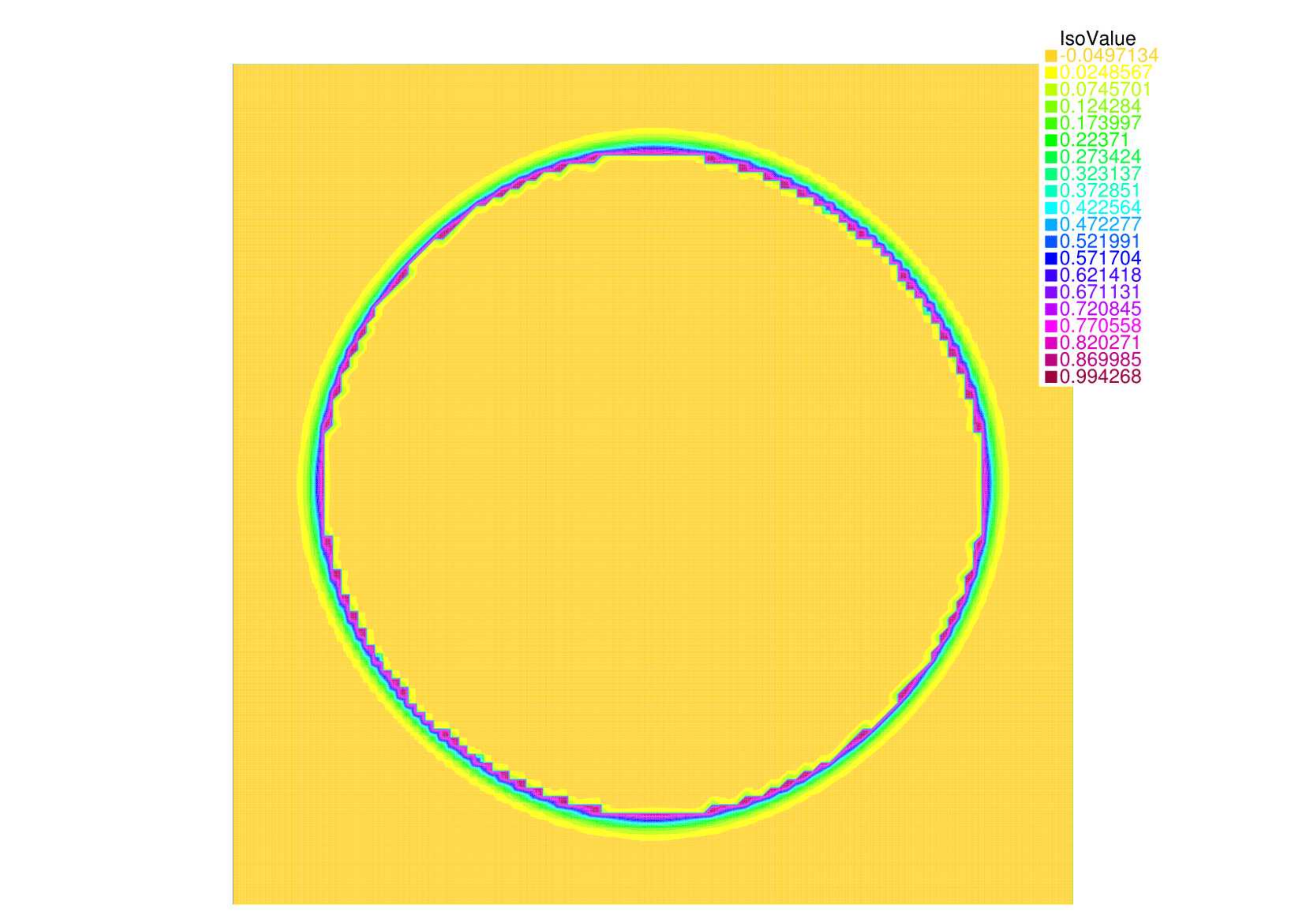}
\\
\includegraphics[scale=0.12]{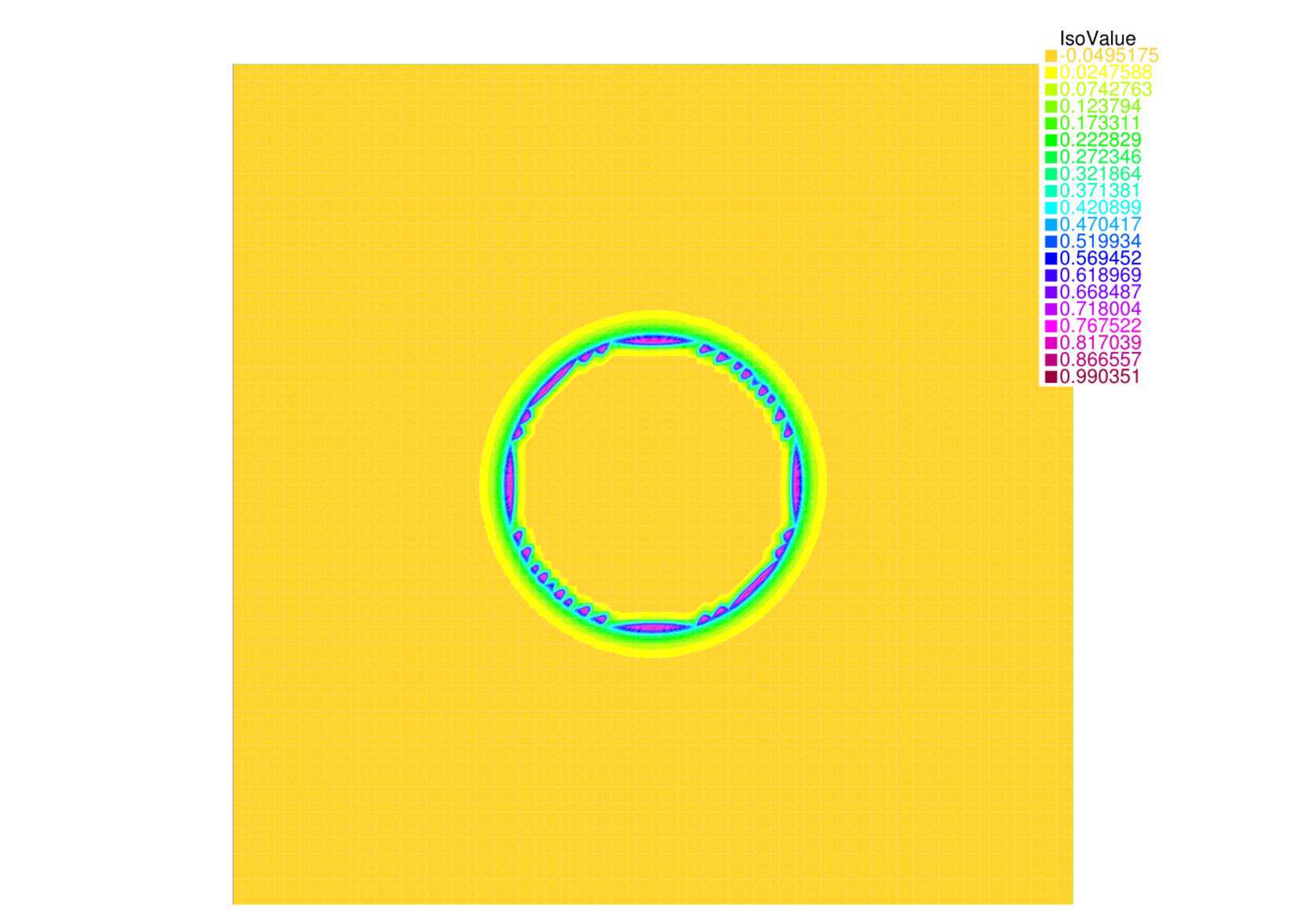}
\includegraphics[scale=0.12]{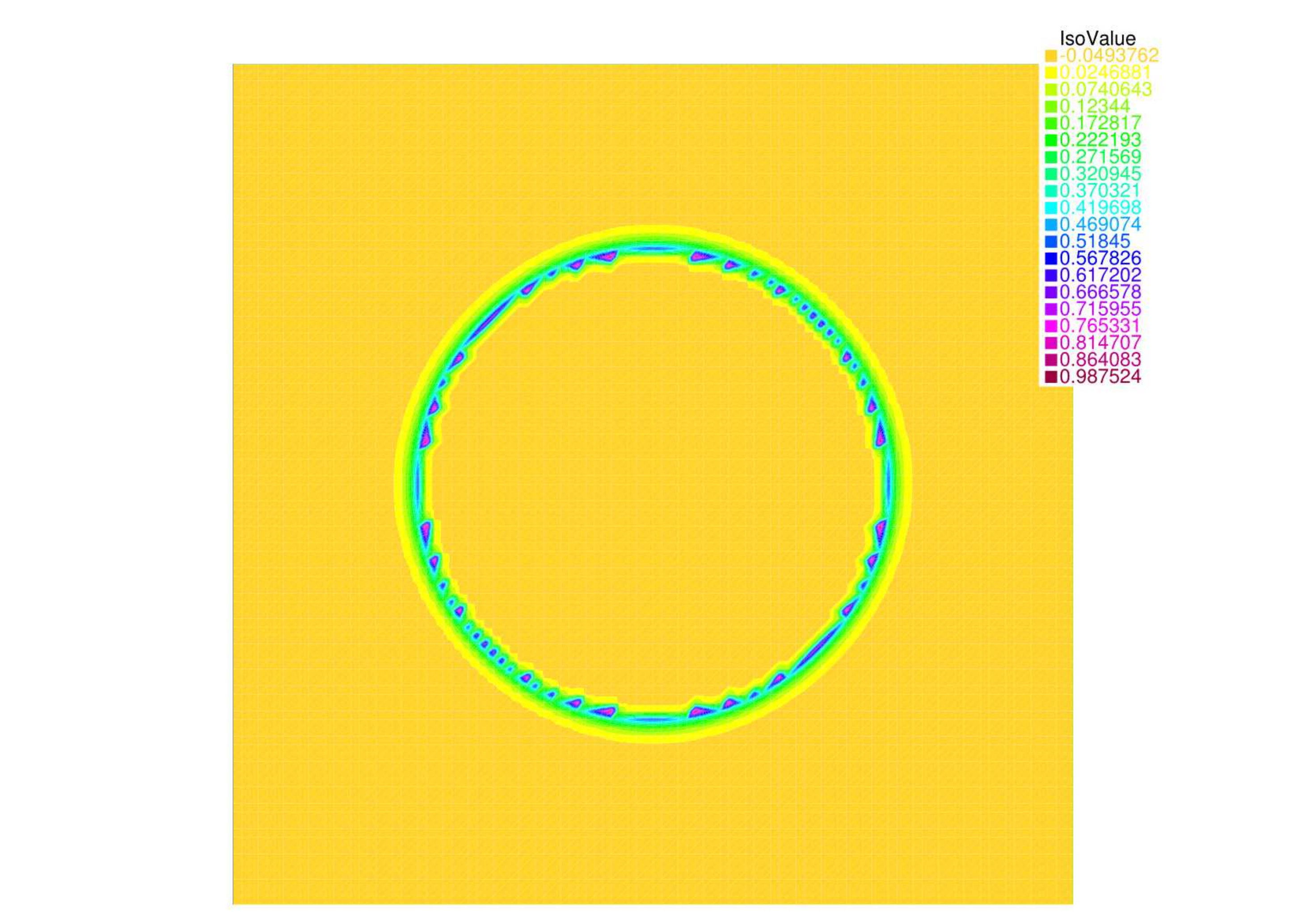}
\includegraphics[scale=0.12]{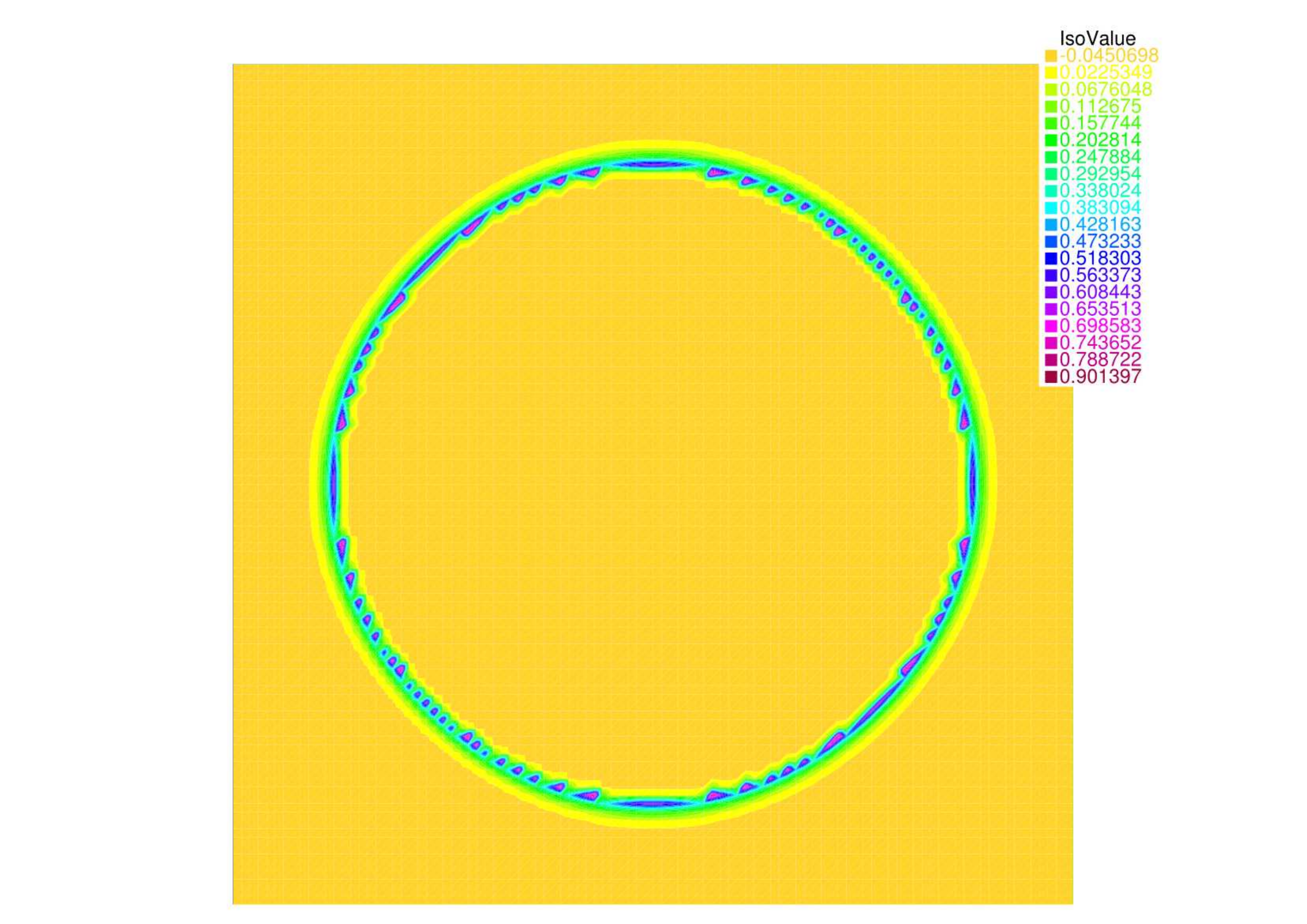}
\includegraphics[scale=0.12]{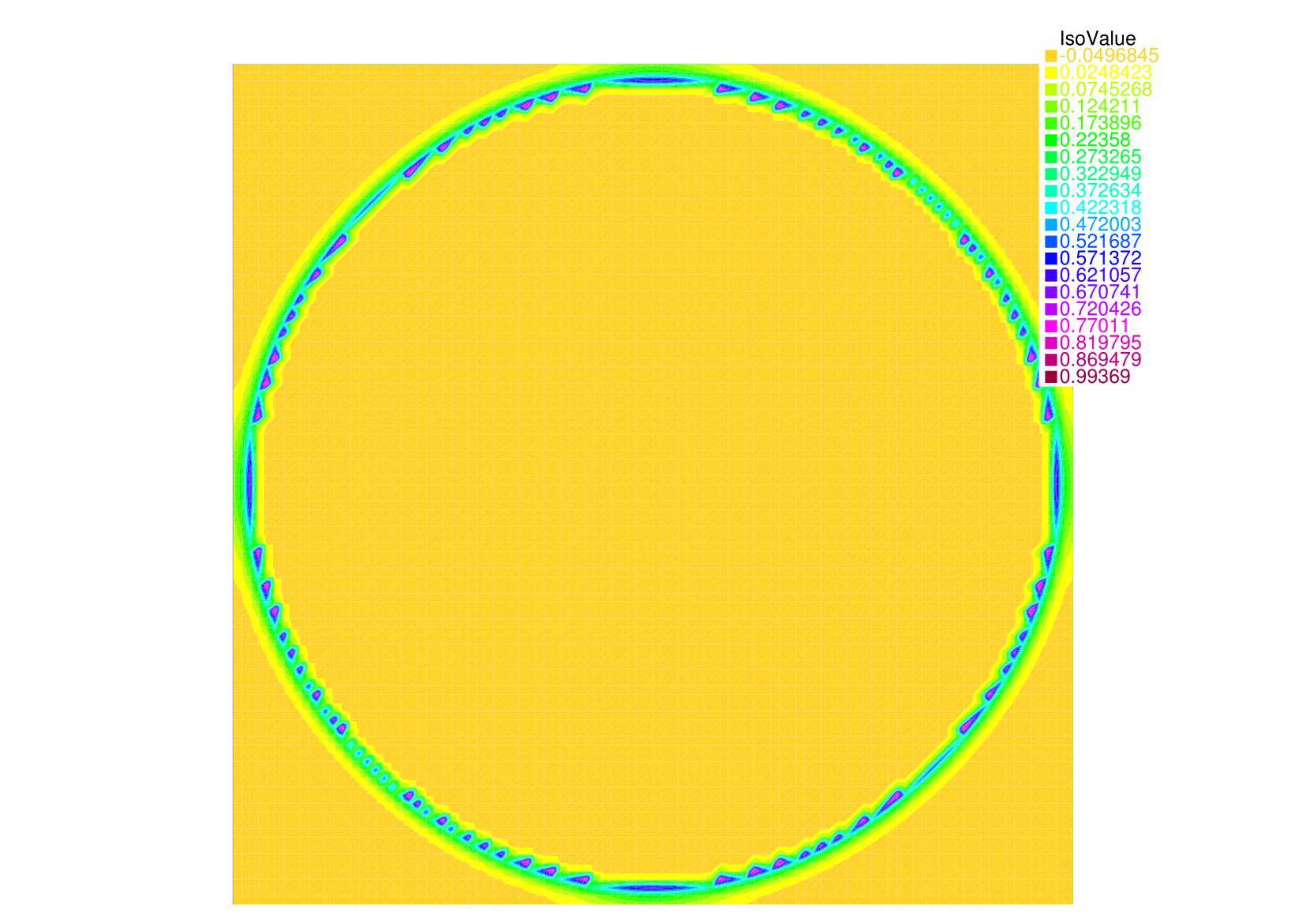}
\caption{Evolution of the difference between the density and pressure at times $t = 0.1, 0.2, 0.3, 0.4$ for $\alpha=0.5$ (top) and $\alpha=1$ (bottom). }
\label{fig3:alpha}
\end{figure}

\subsubsection{Analysis of the effect of $\nu$ (active motion coefficient)} Now we set $P_{\rm max}=1$ and take different values of $\nu=0$, $0.5$ and $1$. The evolution of the density $n_{h,k}$ is shown in Figure \ref{fig:nu} where we see that the velocity of propagation of the tumor cells increases with respect to $\nu$ as noted for times $t=0.1$, $0.2$, $0.3$ and $0.4$. Moreover, no particular differences have been observed in the width of the interface between the tumor and pre-tumor cells for the different values of $\nu$.    

\begin{figure}[h!]
\centering
\includegraphics[scale=0.12]{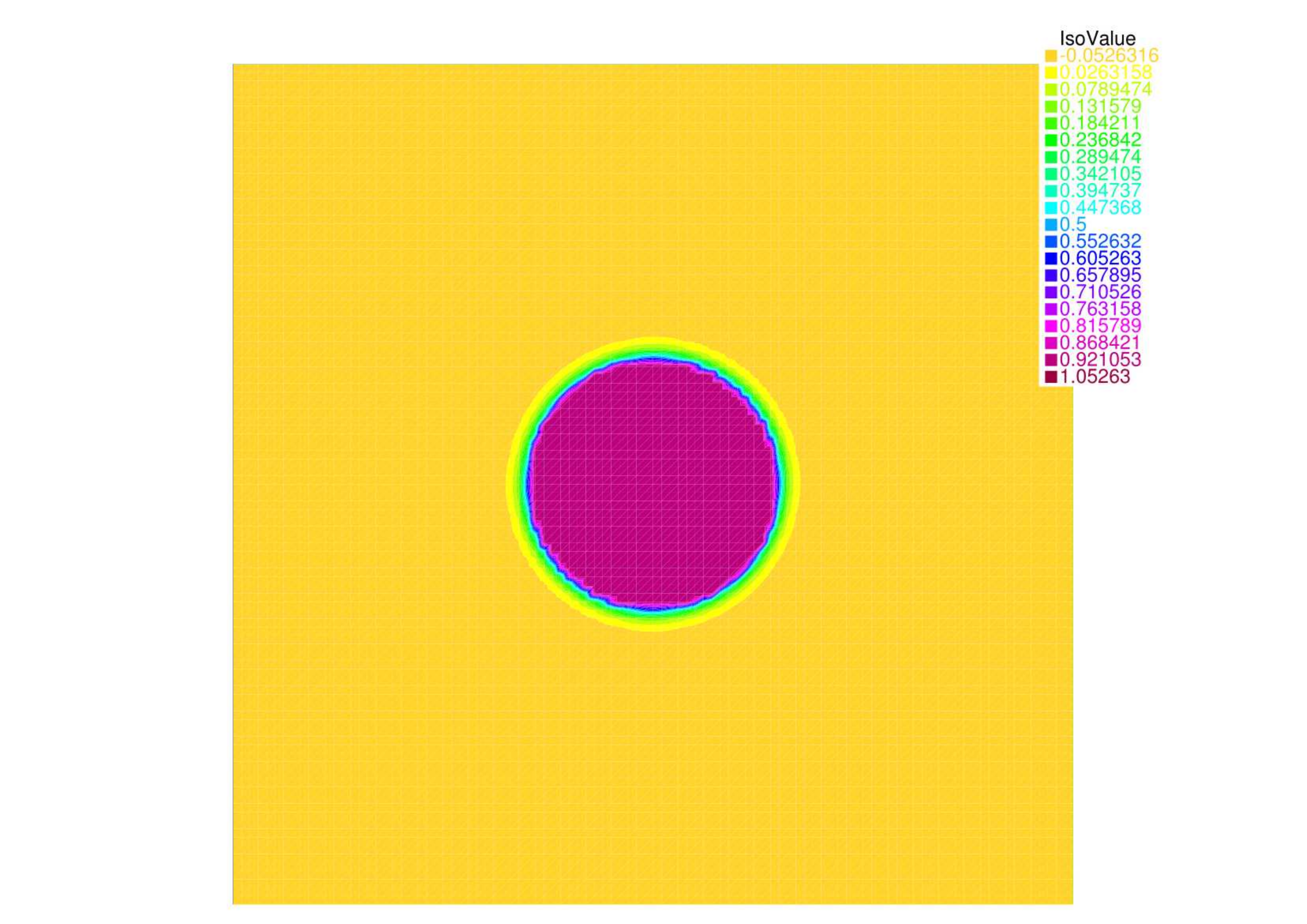}
\includegraphics[scale=0.12]{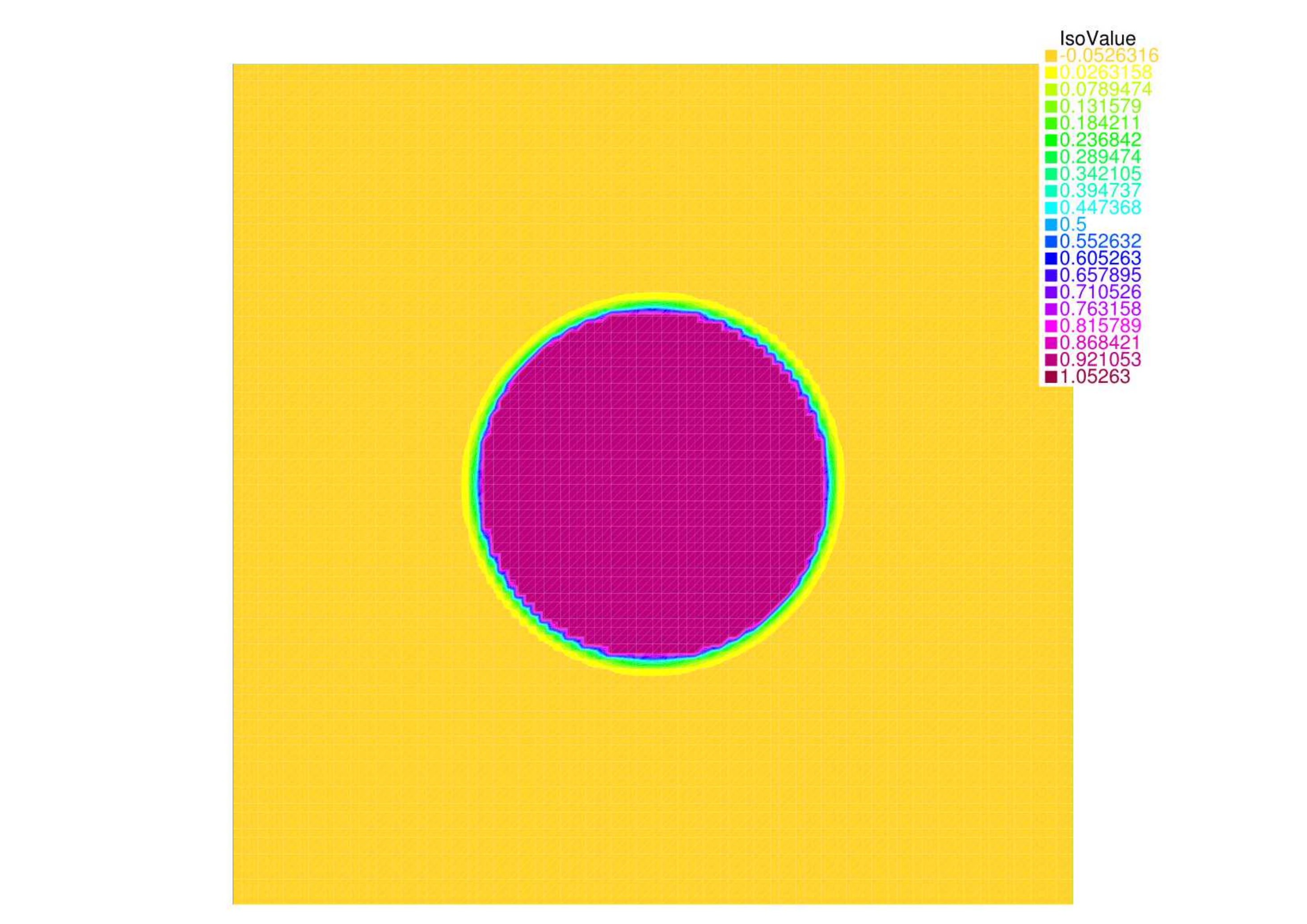}
\includegraphics[scale=0.12]{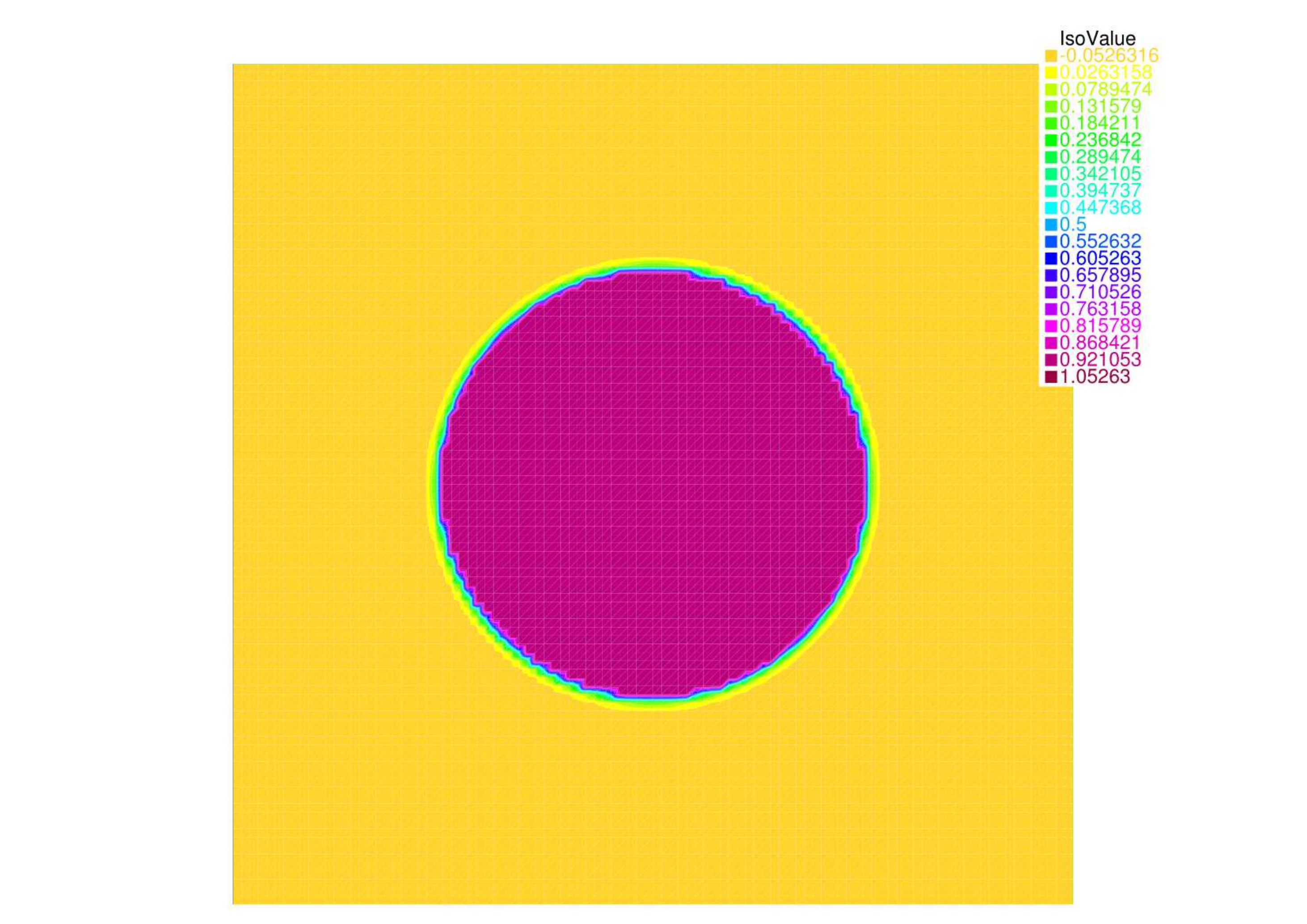}
\includegraphics[scale=0.12]{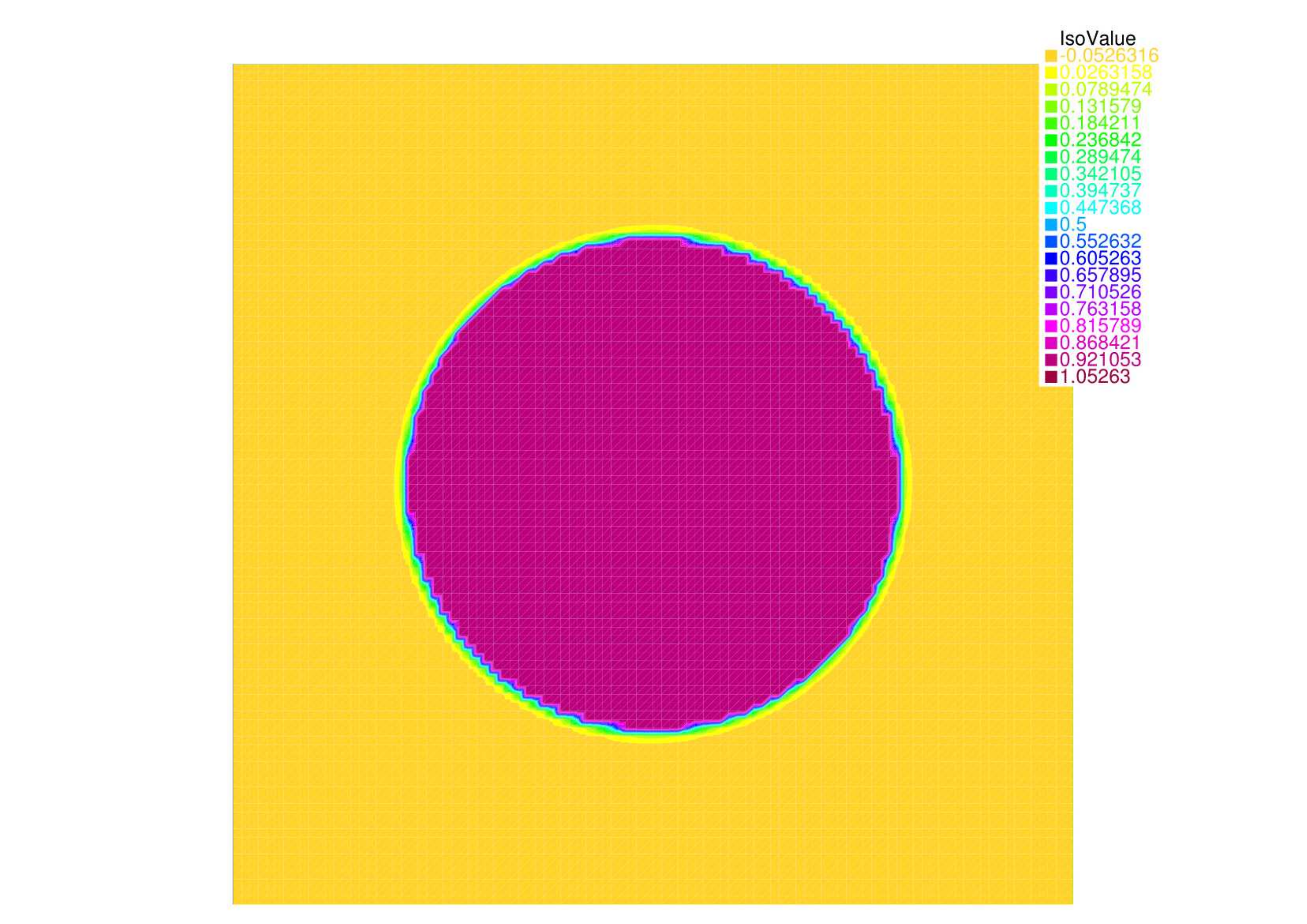}
\\
\includegraphics[scale=0.12]{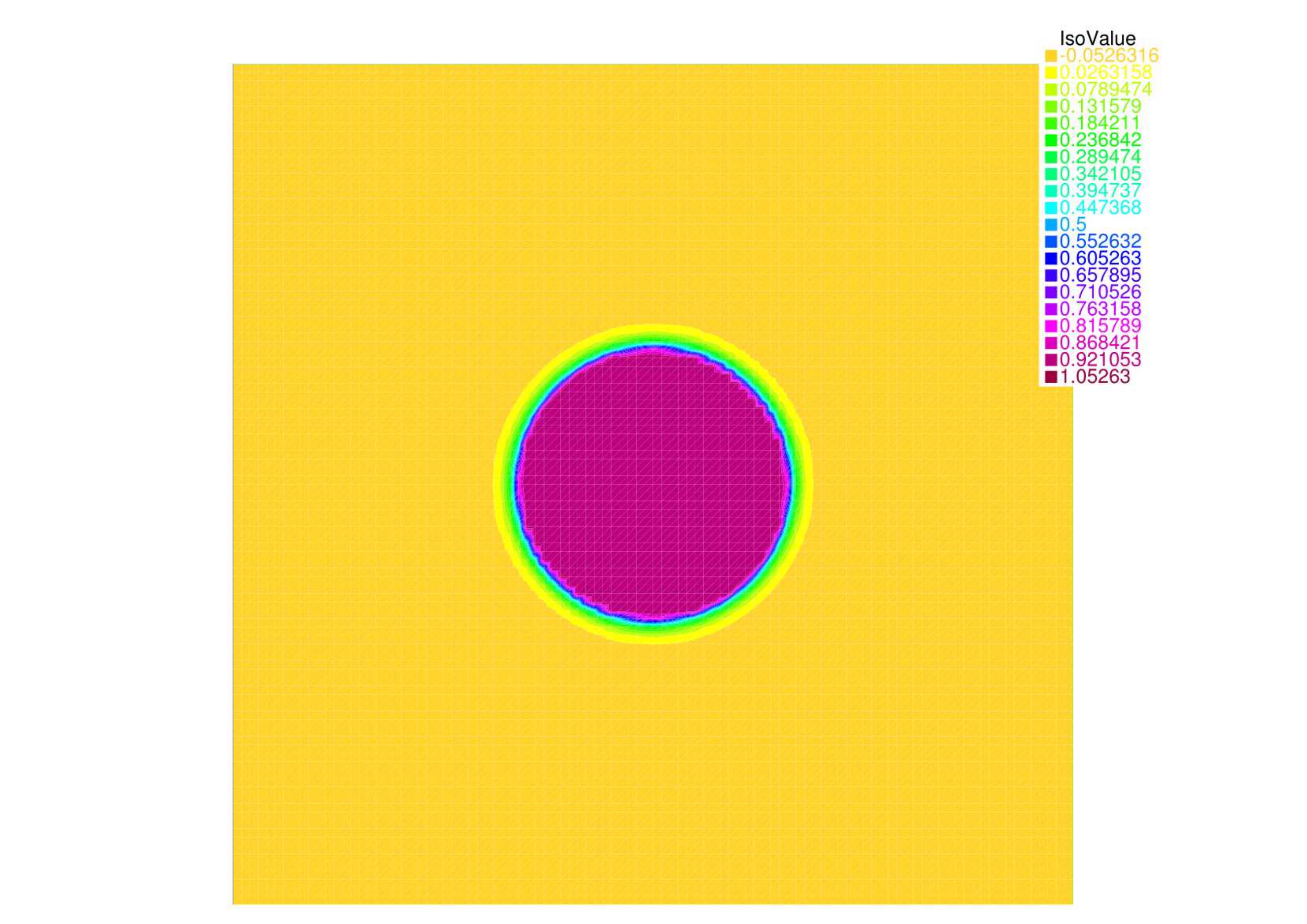}
\includegraphics[scale=0.12]{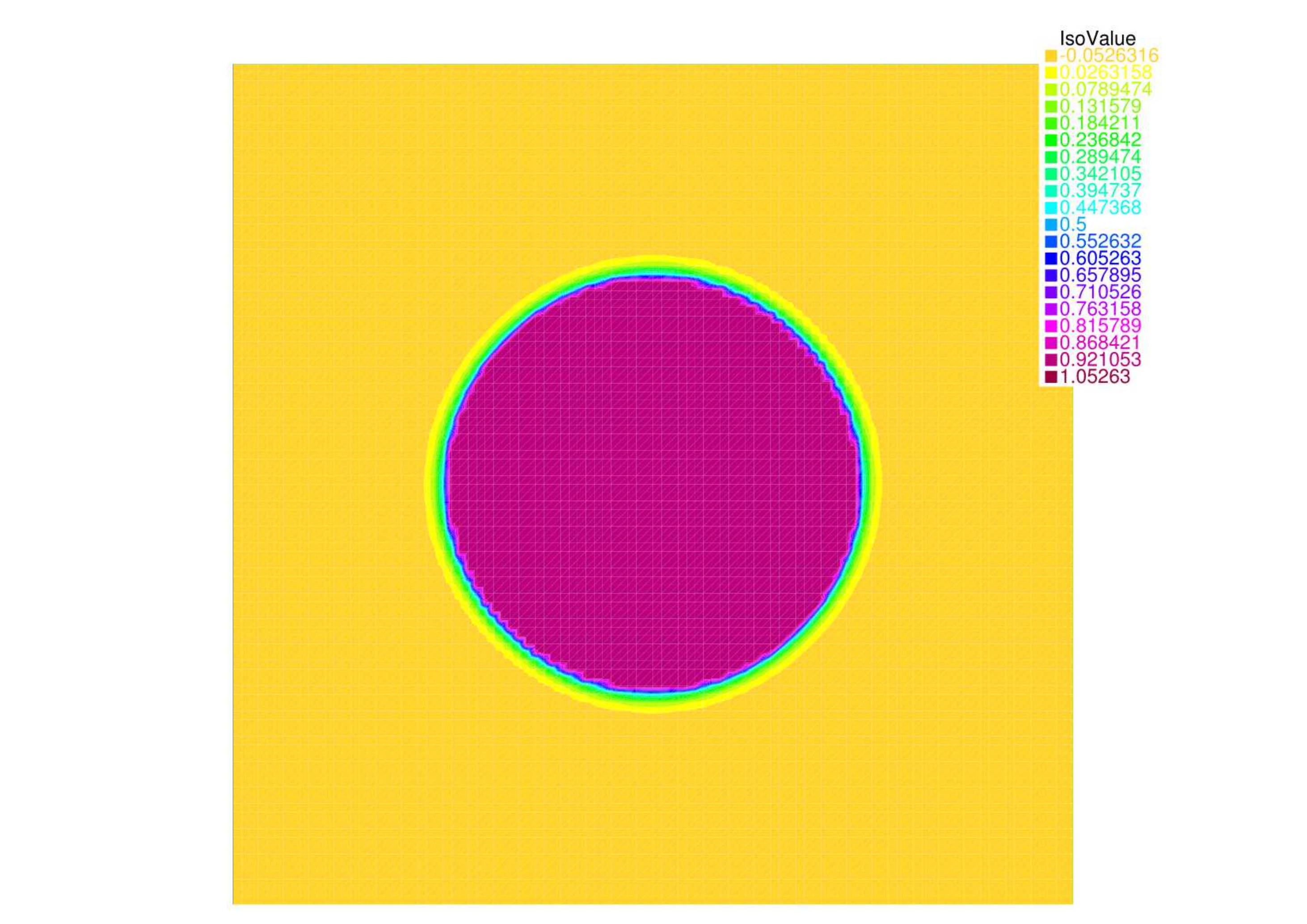}
\includegraphics[scale=0.12]{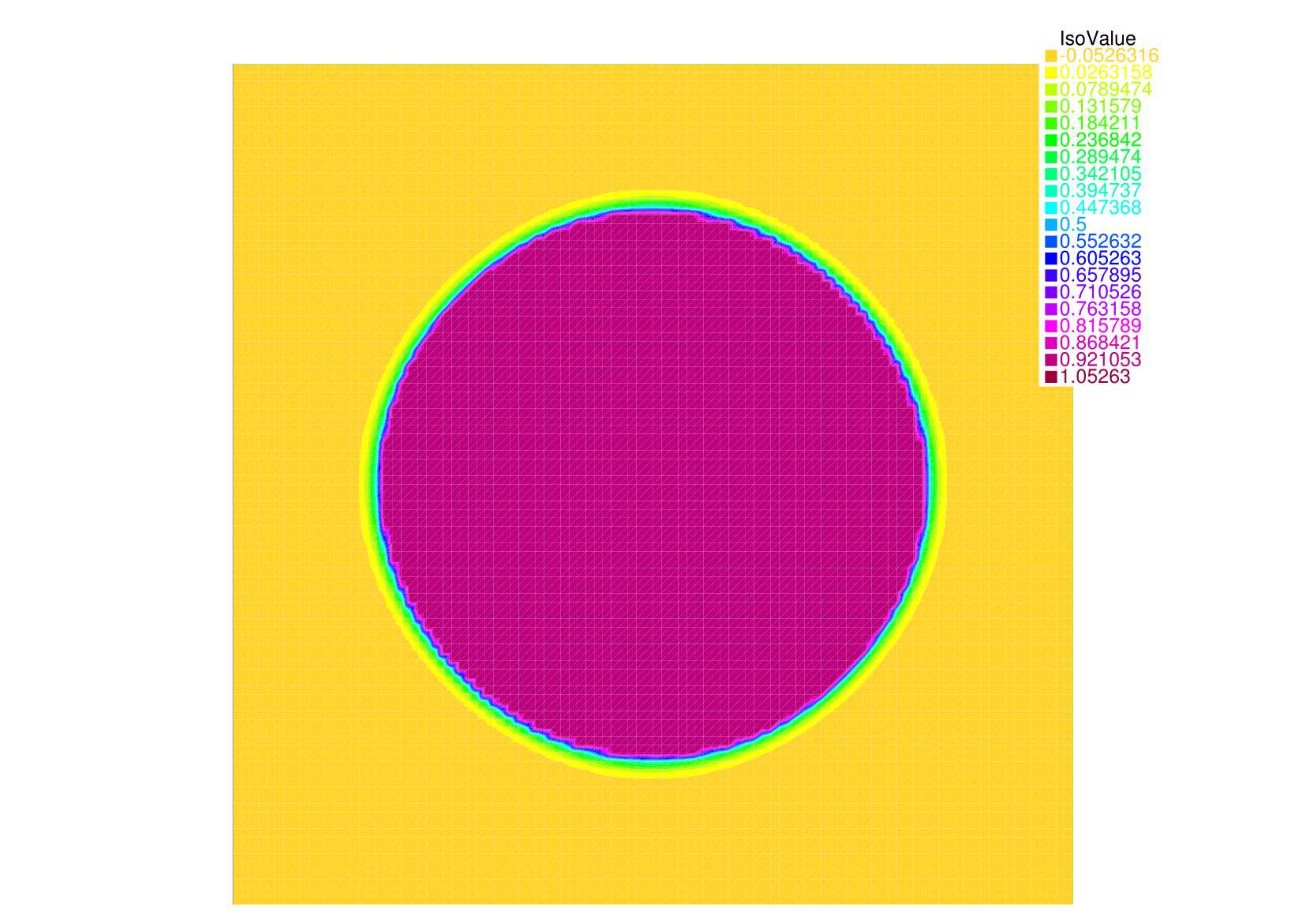}
\includegraphics[scale=0.12]{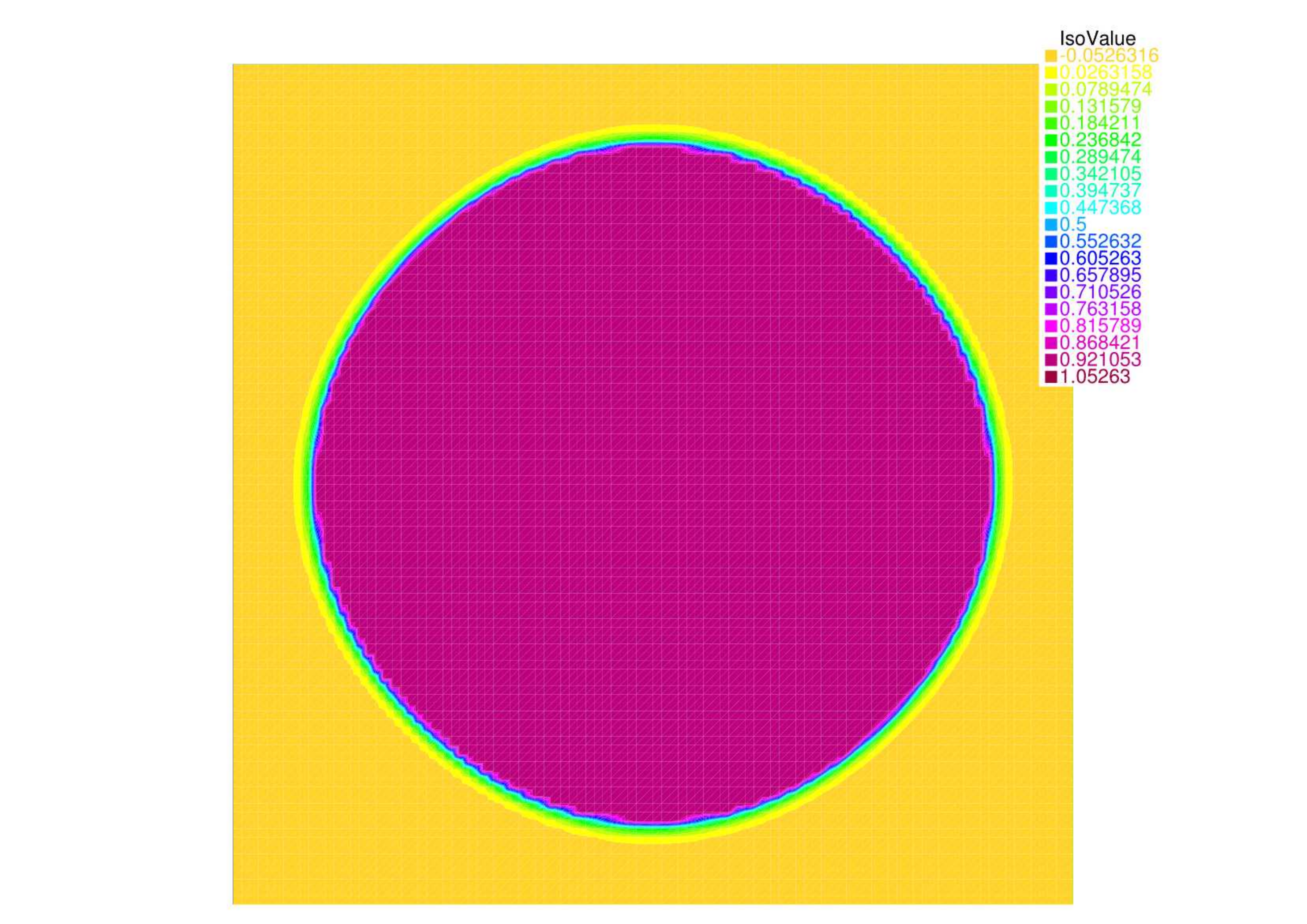}
\\
\includegraphics[scale=0.12]{Figures3/Density-01_nu=1.pdf}
\includegraphics[scale=0.12]{Figures3/Density-02_nu=1.pdf}
\includegraphics[scale=0.12]{Figures3/Density-03_nu=1.pdf}
\includegraphics[scale=0.12]{Figures3/Density-04_nu=1.pdf}
\caption{Comparison of the density at times $t = 0.1, 0.2, 0.3, 0.4$ for different $\nu=0$ (top), $0.5$ (middle), $1$ (bottom).}
\label{fig:nu}
\end{figure}

\subsubsection{Analysis of the effect of $k$} In this simulation we select $k=10$ and $1000$. The first thing we have noted is that there is a dependence between $k$ and $\tau$ which has been taken $0.5\cdot 10^{-5}$. As can be seen in Figure \ref{fig:k}, there are no particular differences for $k=10$ and $1000$ at times $t=0.1$, $0.2$, $0.3$ and $0.4$.  

\begin{figure}[h!]
\centering
\includegraphics[scale=0.12]{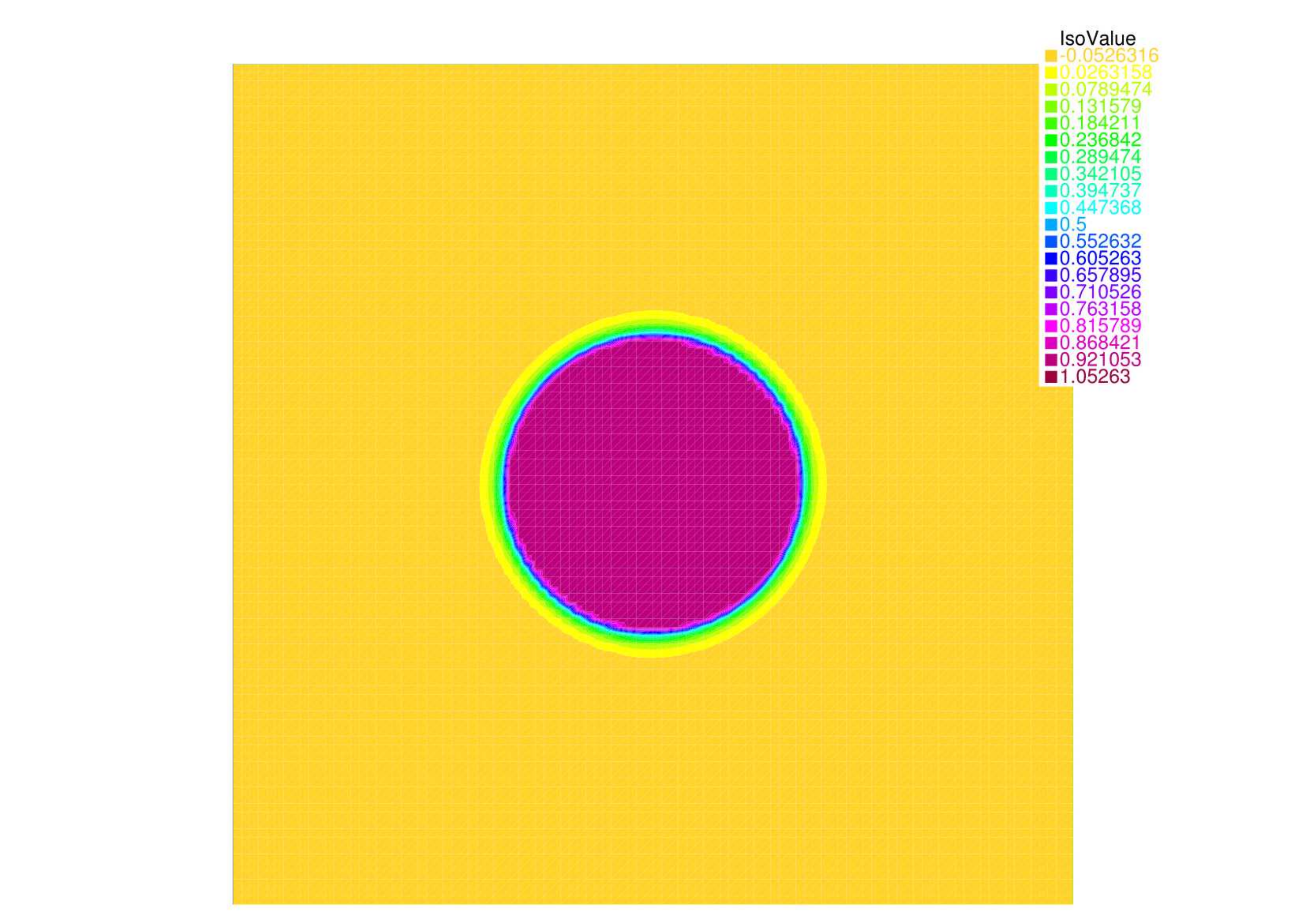}
\includegraphics[scale=0.12]{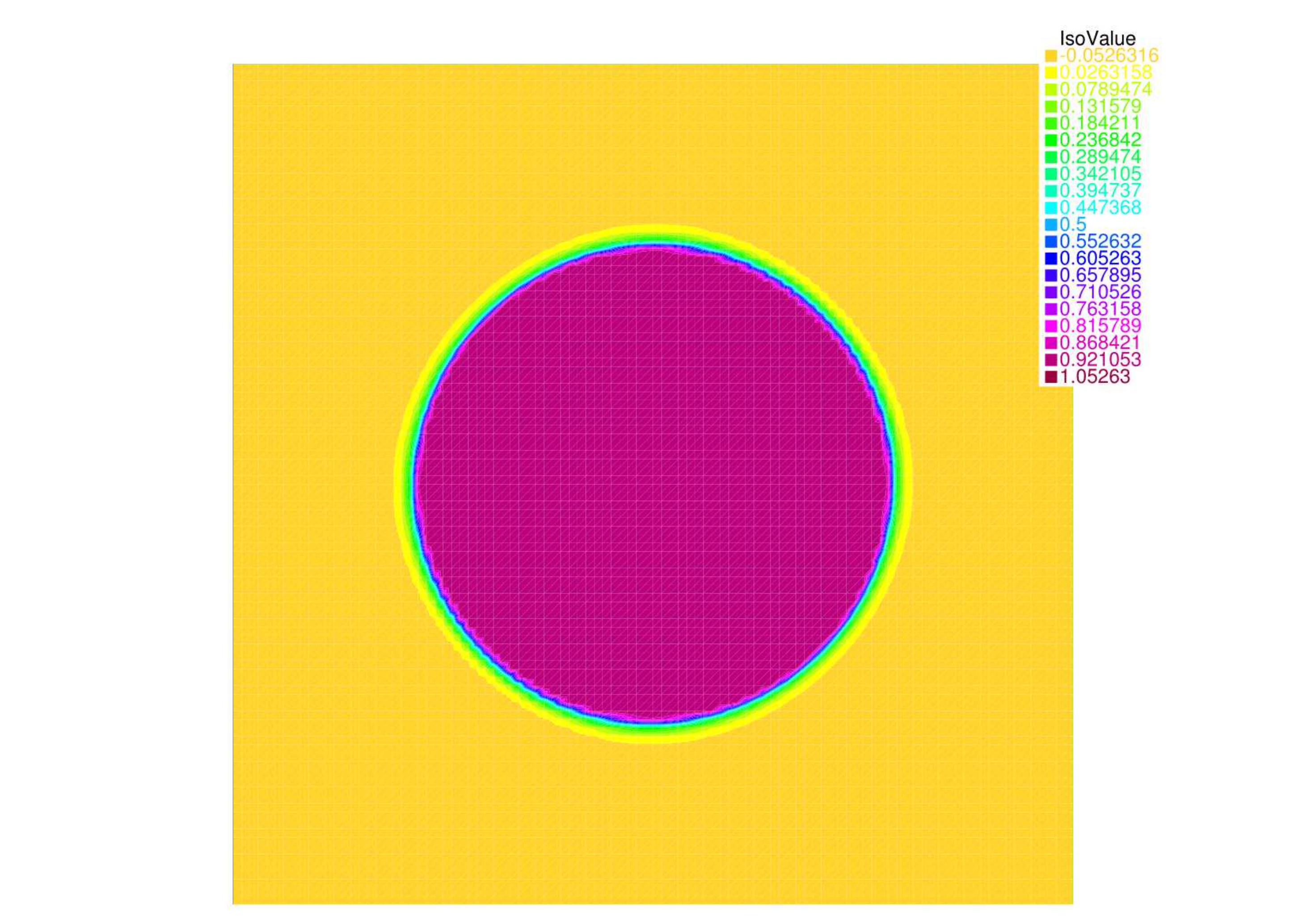}
\includegraphics[scale=0.12]{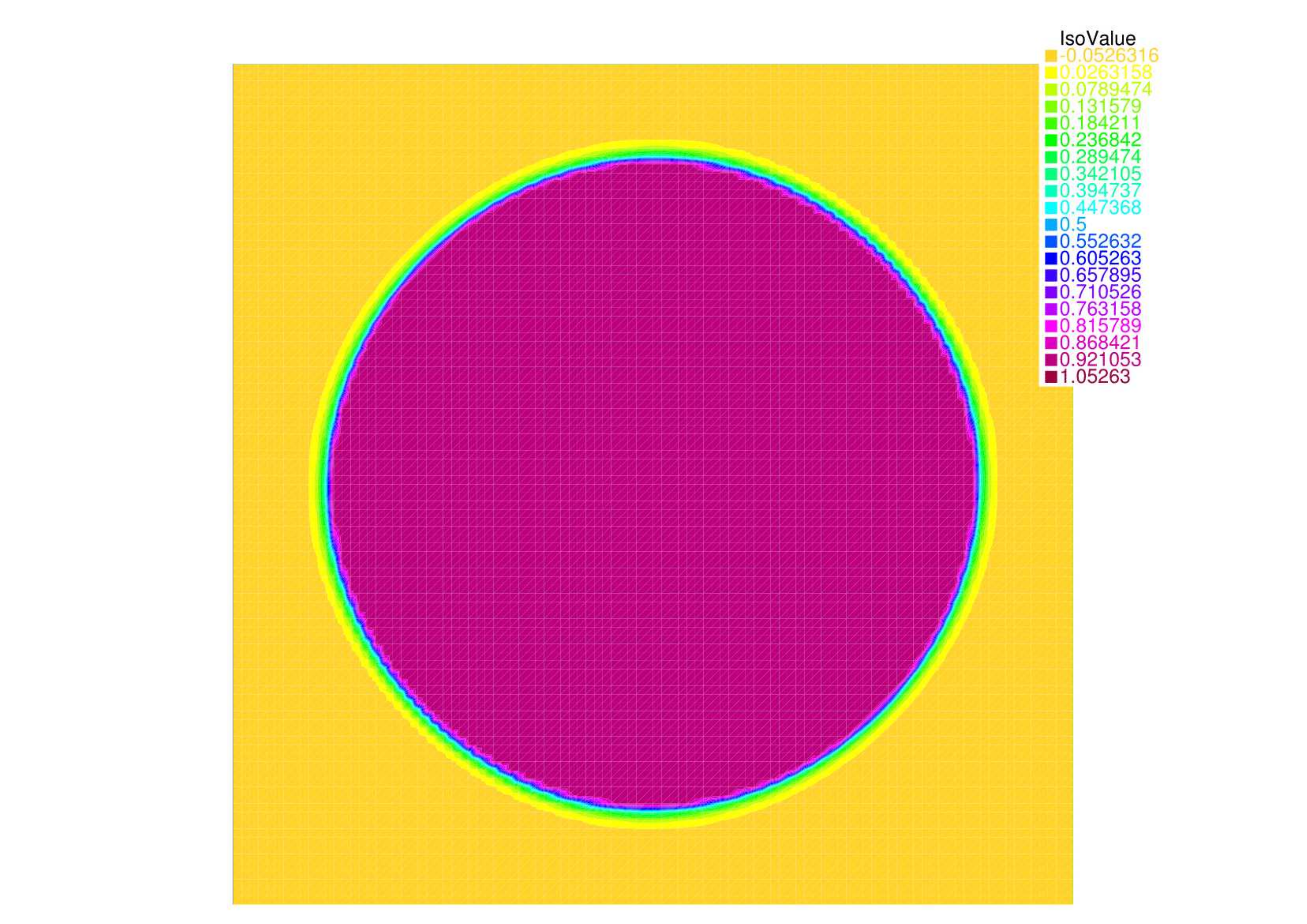}
\includegraphics[scale=0.12]{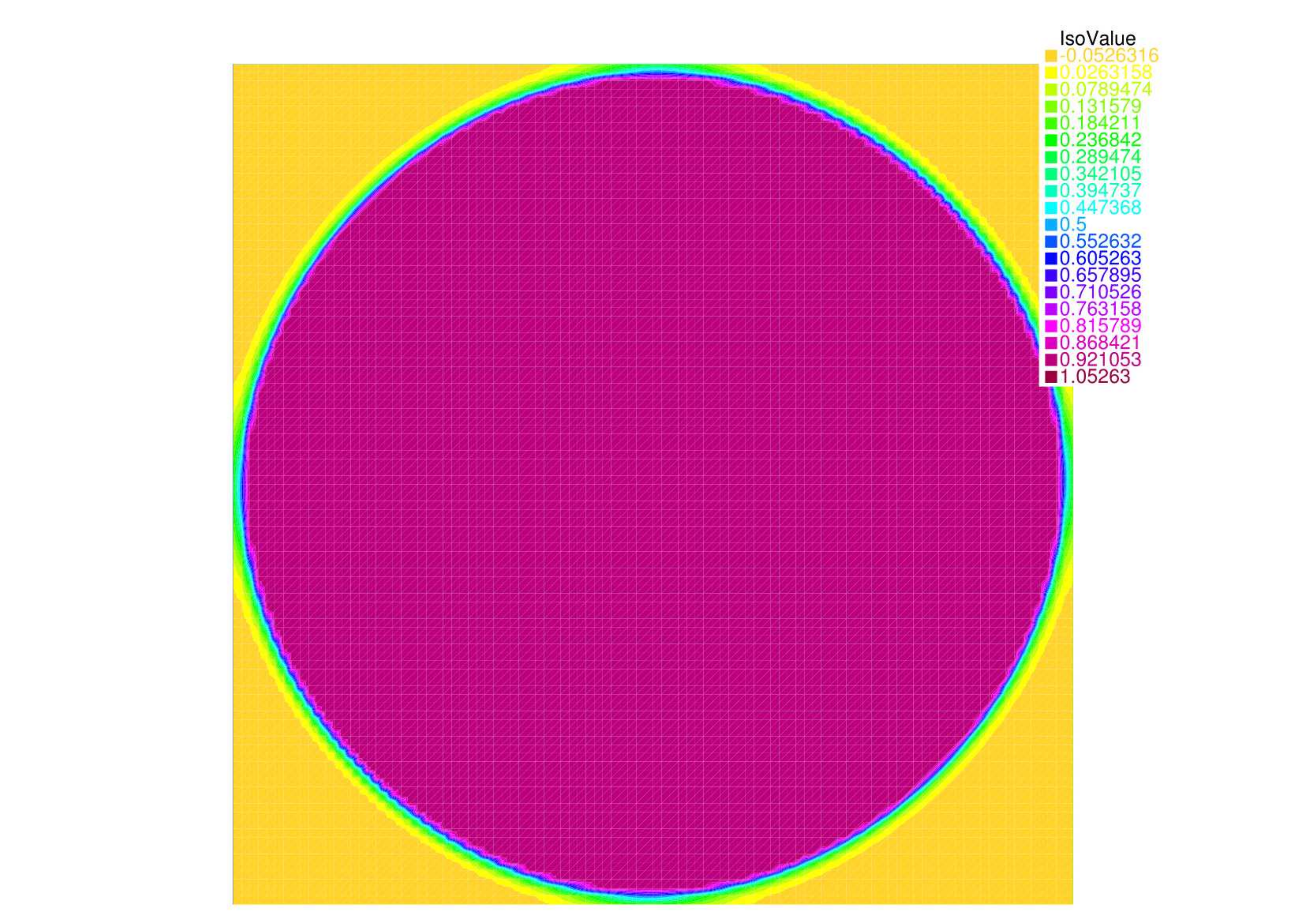}
\\
\includegraphics[scale=0.12]{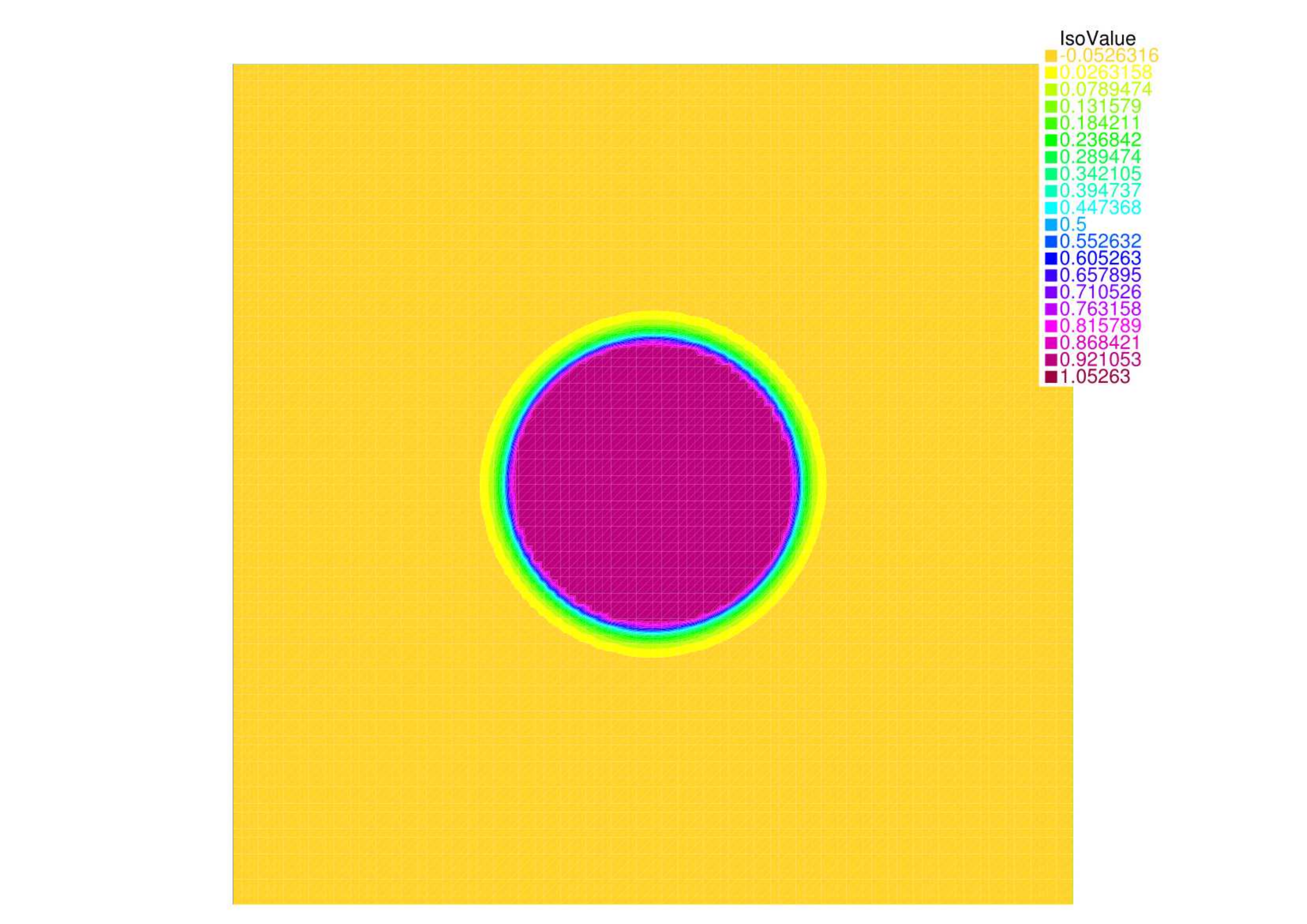}
\includegraphics[scale=0.12]{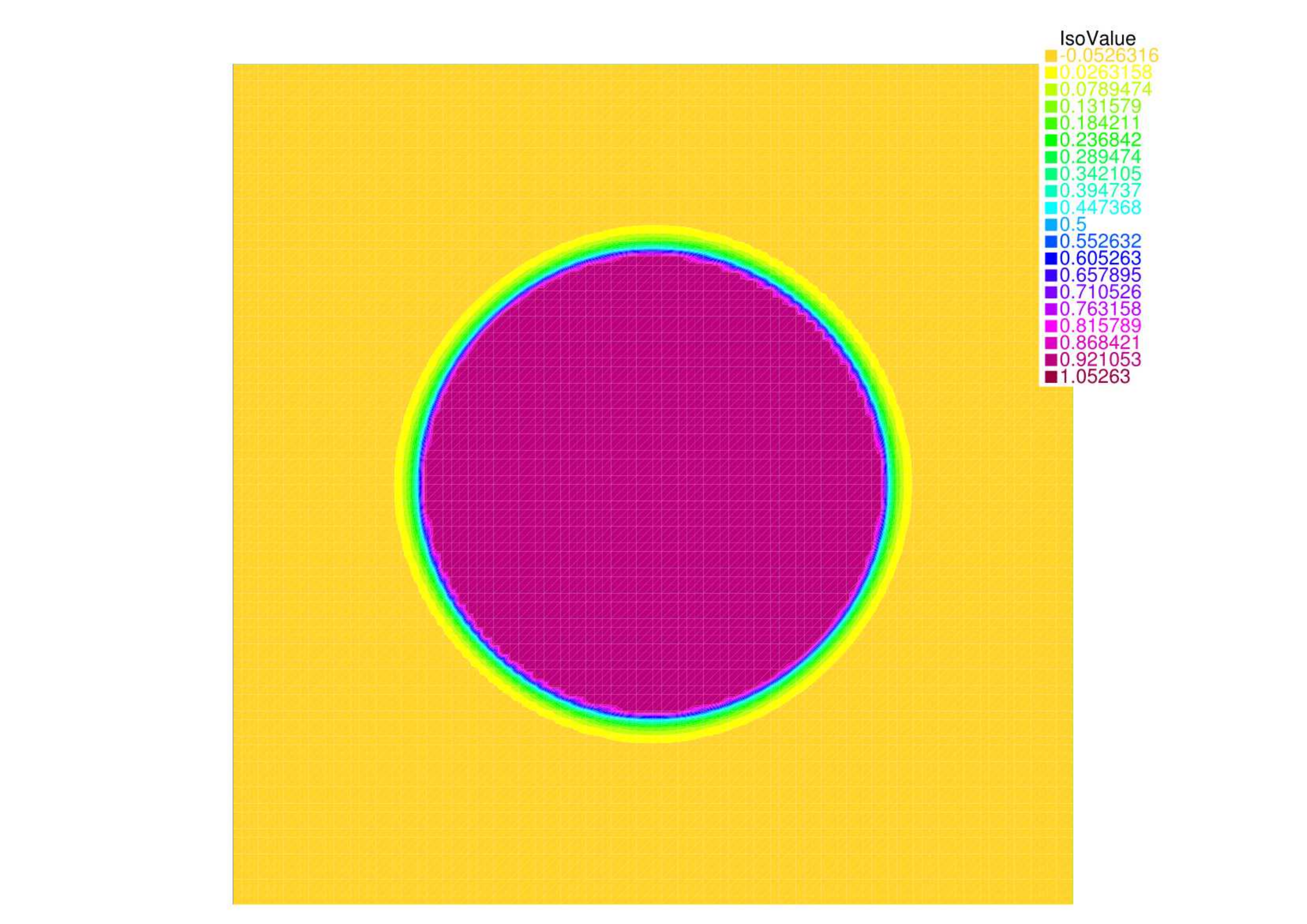}
\includegraphics[scale=0.12]{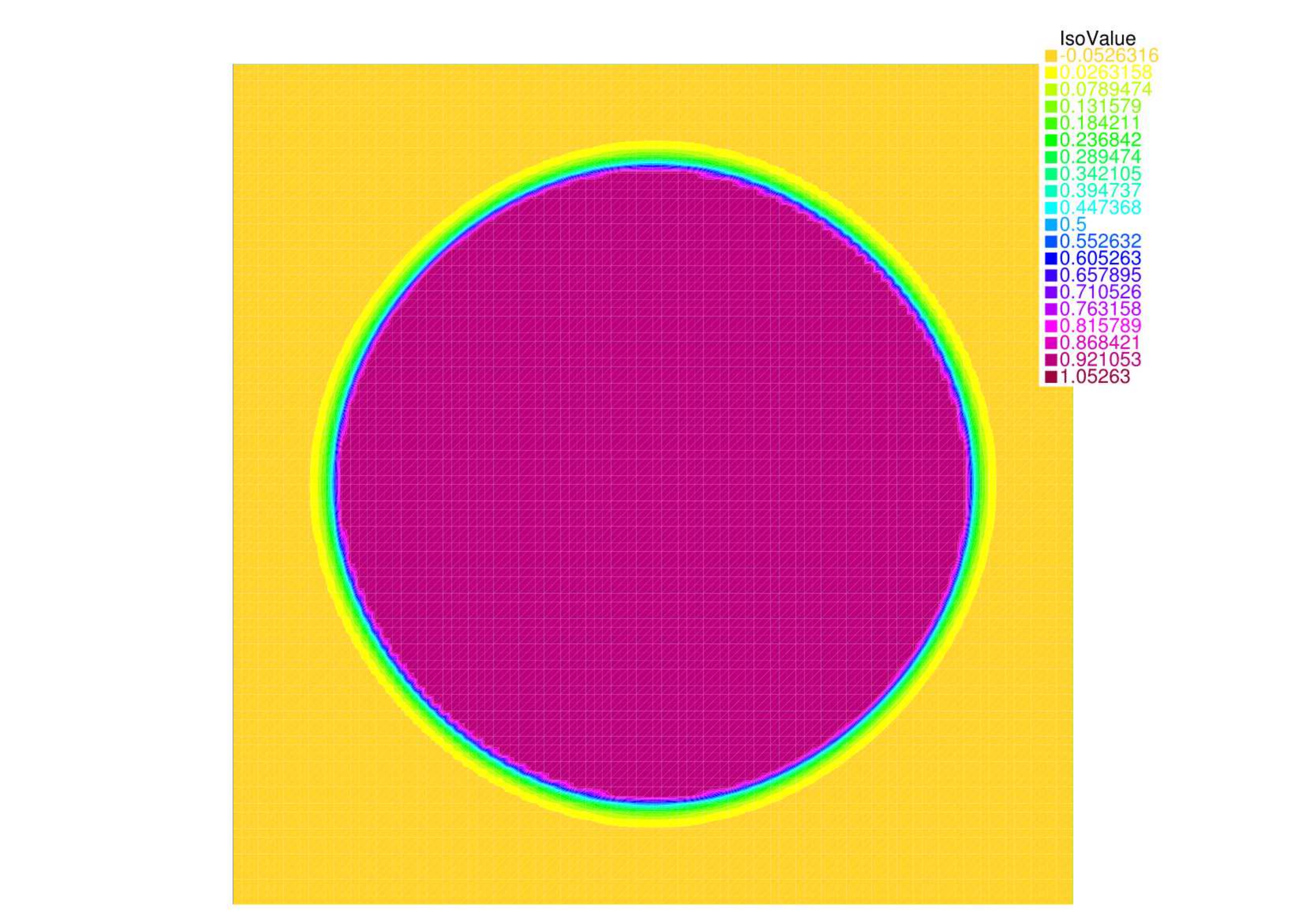}
\includegraphics[scale=0.12]{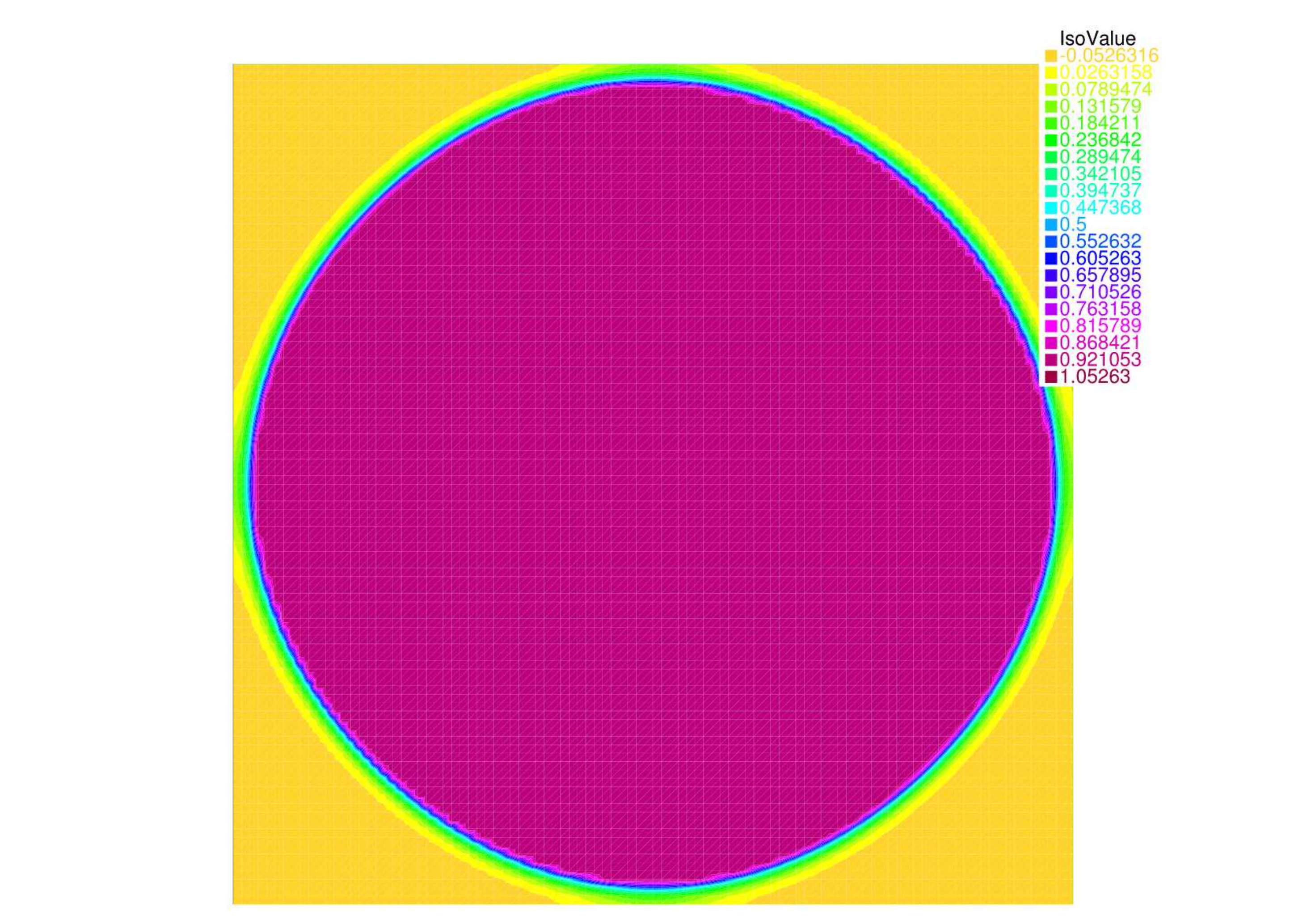}
\caption{Comparison of the density at times $t = 0.1, 0.2, 0.3, 0.4$ for different $k=10$ (top) and $1000$ (bottom).} 
\label{fig:k}
\end{figure}

\subsubsection{Analysis of the effect of $P_{\rm max}$} Let us take $P_{\rm max}=10$ and $30$. Figure \ref{fig:P} shows that the dynamics is sensitive to the different values for the homeostatic pressure. We highlight that, for $P_{\rm max}=30$, the evolution of the interphase is faster than the one for $P_{\rm max} =10$. Moreover, the shape of the interphase seems different as depicted in Figure \ref{fig:P} for times $t=0.1$, $0.2$, $0.3$ and $0.4$.

\begin{figure}[h!]
\centering
\includegraphics[scale=0.12]{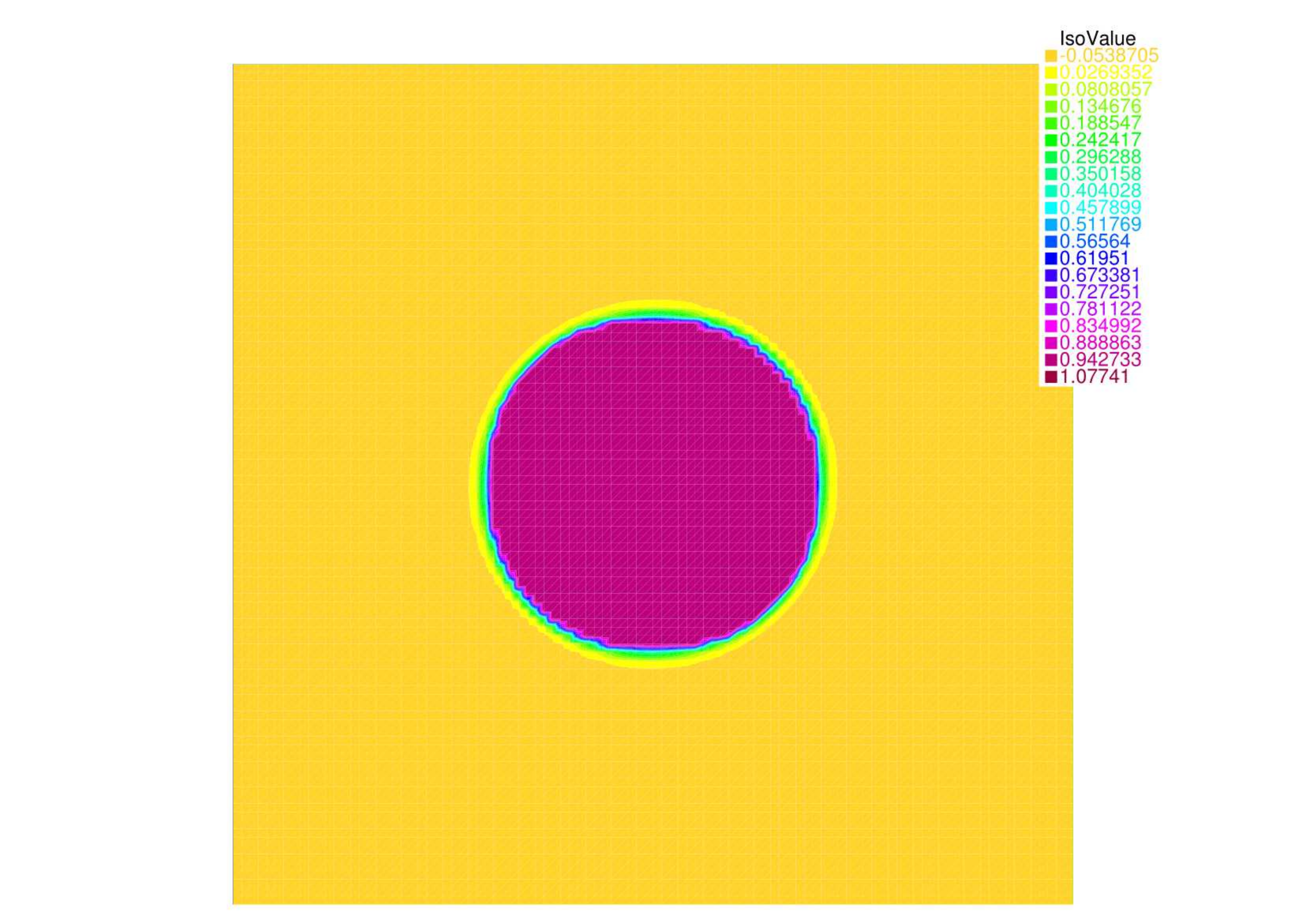}
\includegraphics[scale=0.12]{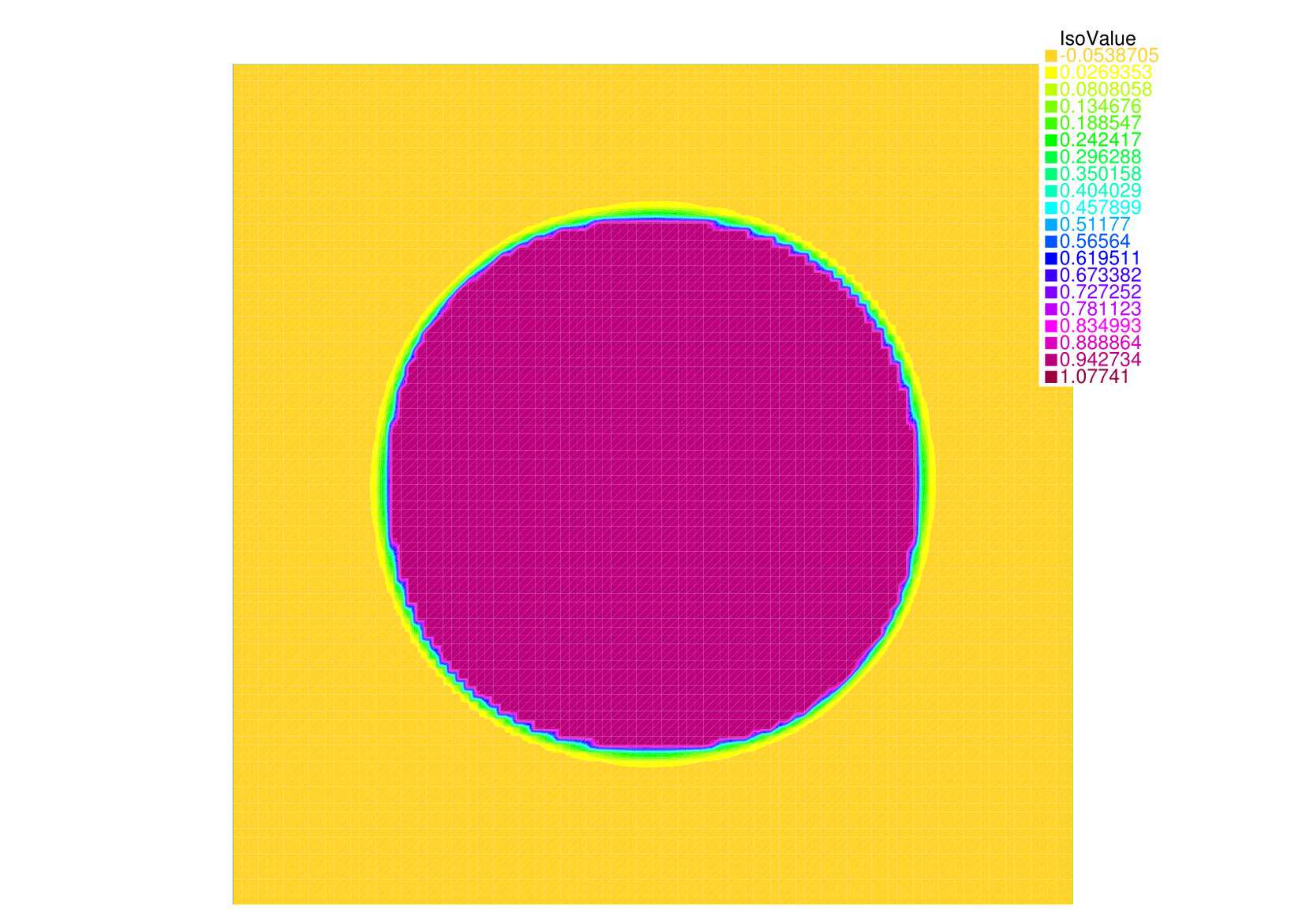}
\includegraphics[scale=0.12]{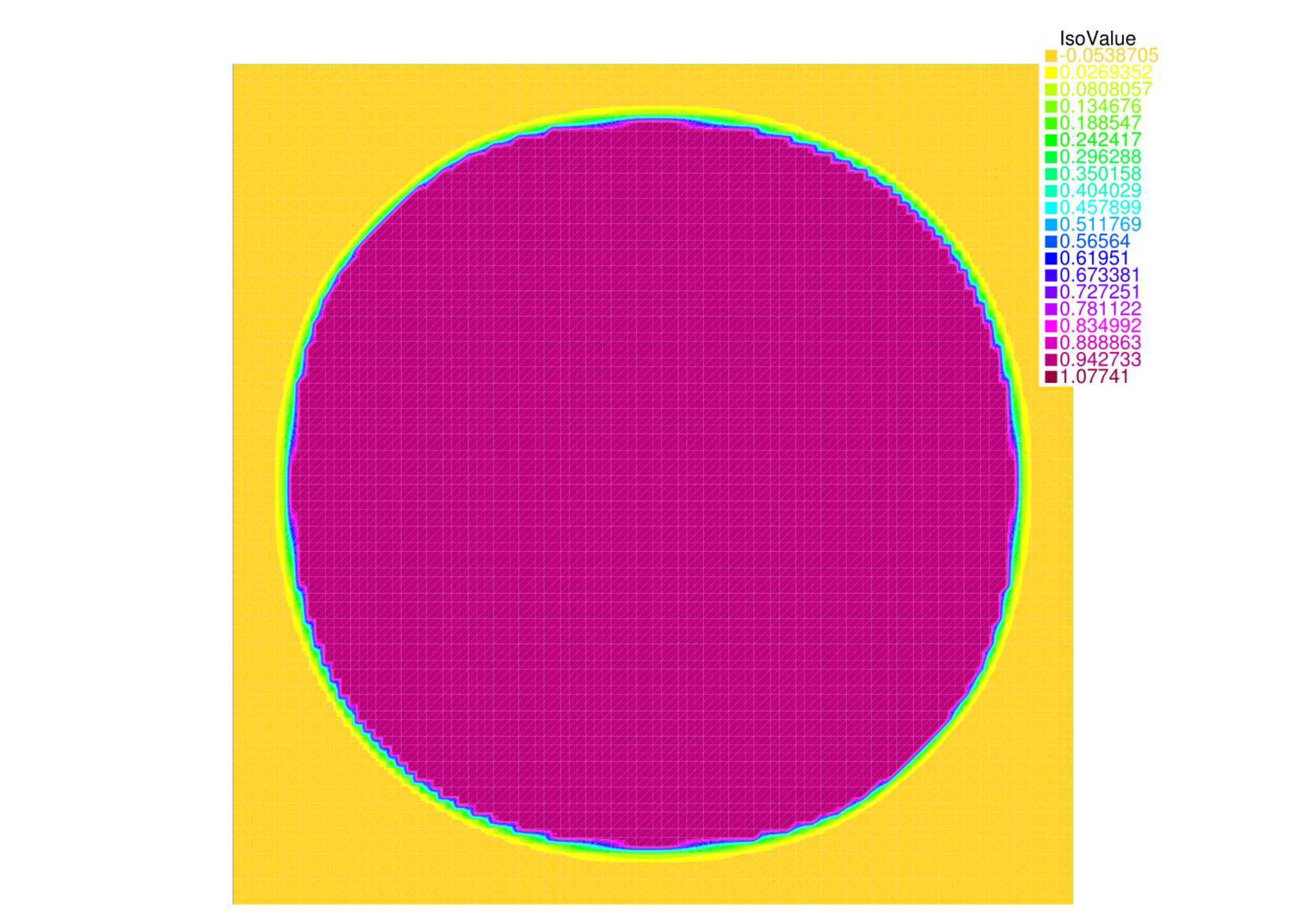}
\includegraphics[scale=0.12]{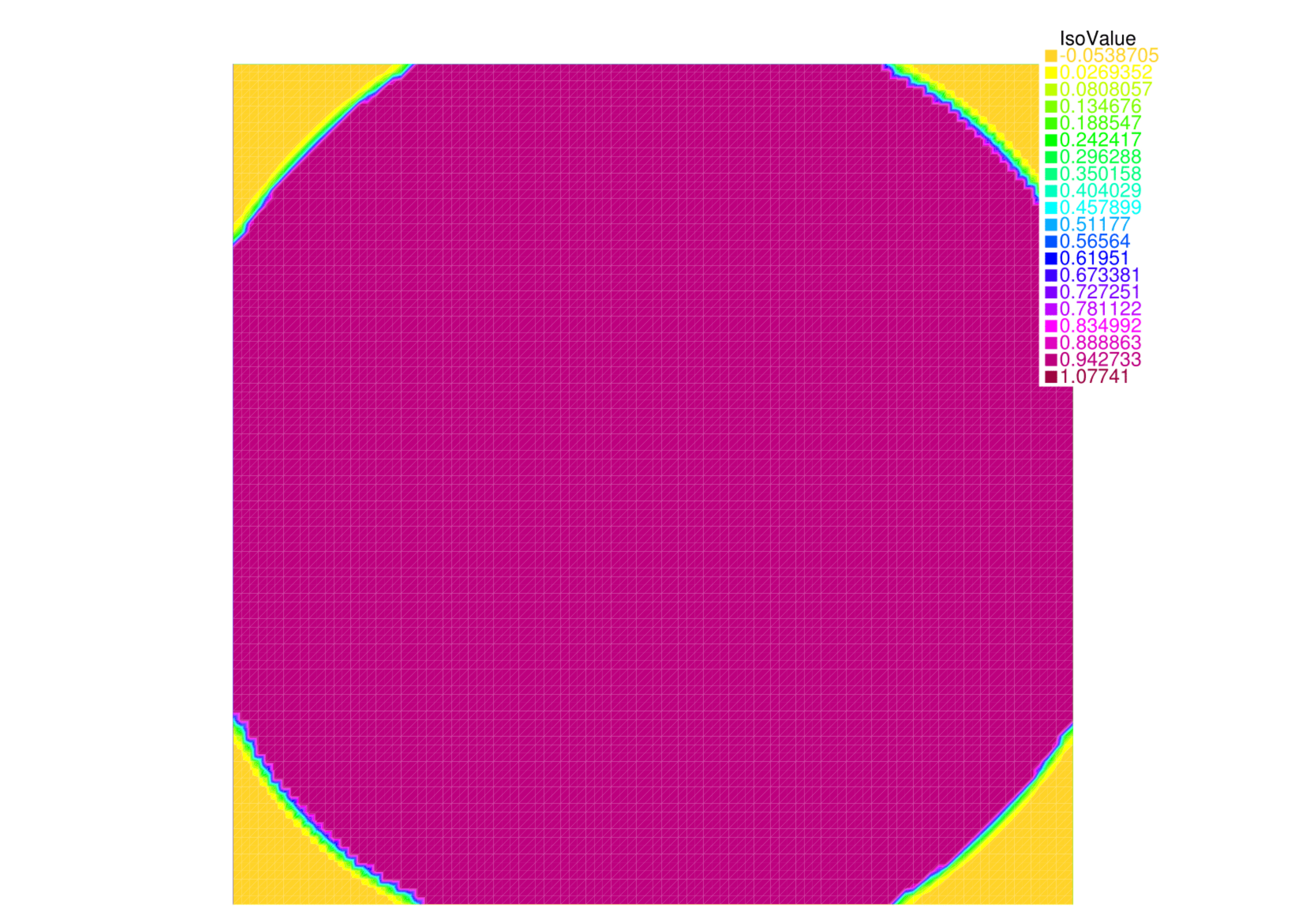}
\\
\includegraphics[scale=0.12]{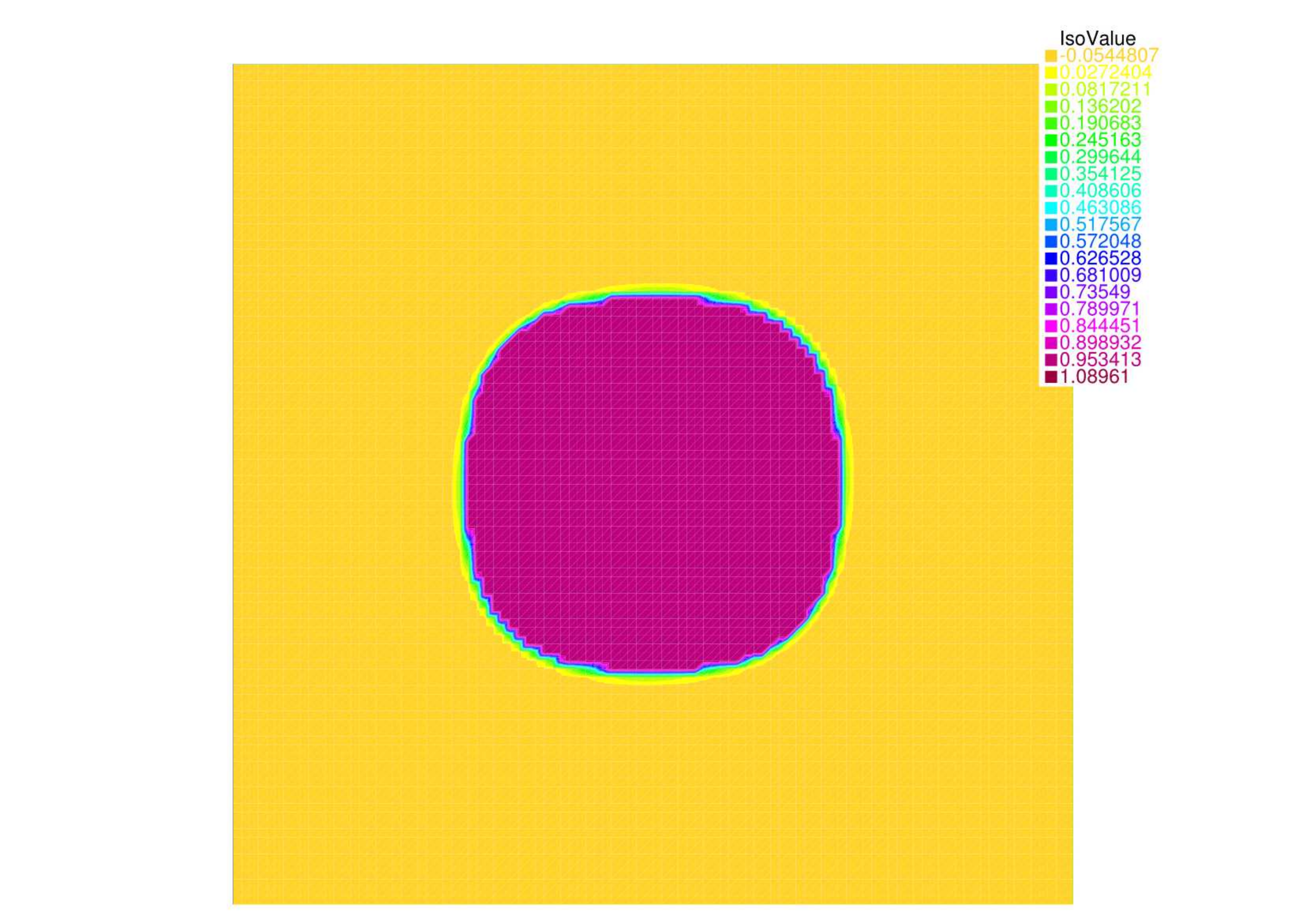}
\includegraphics[scale=0.12]{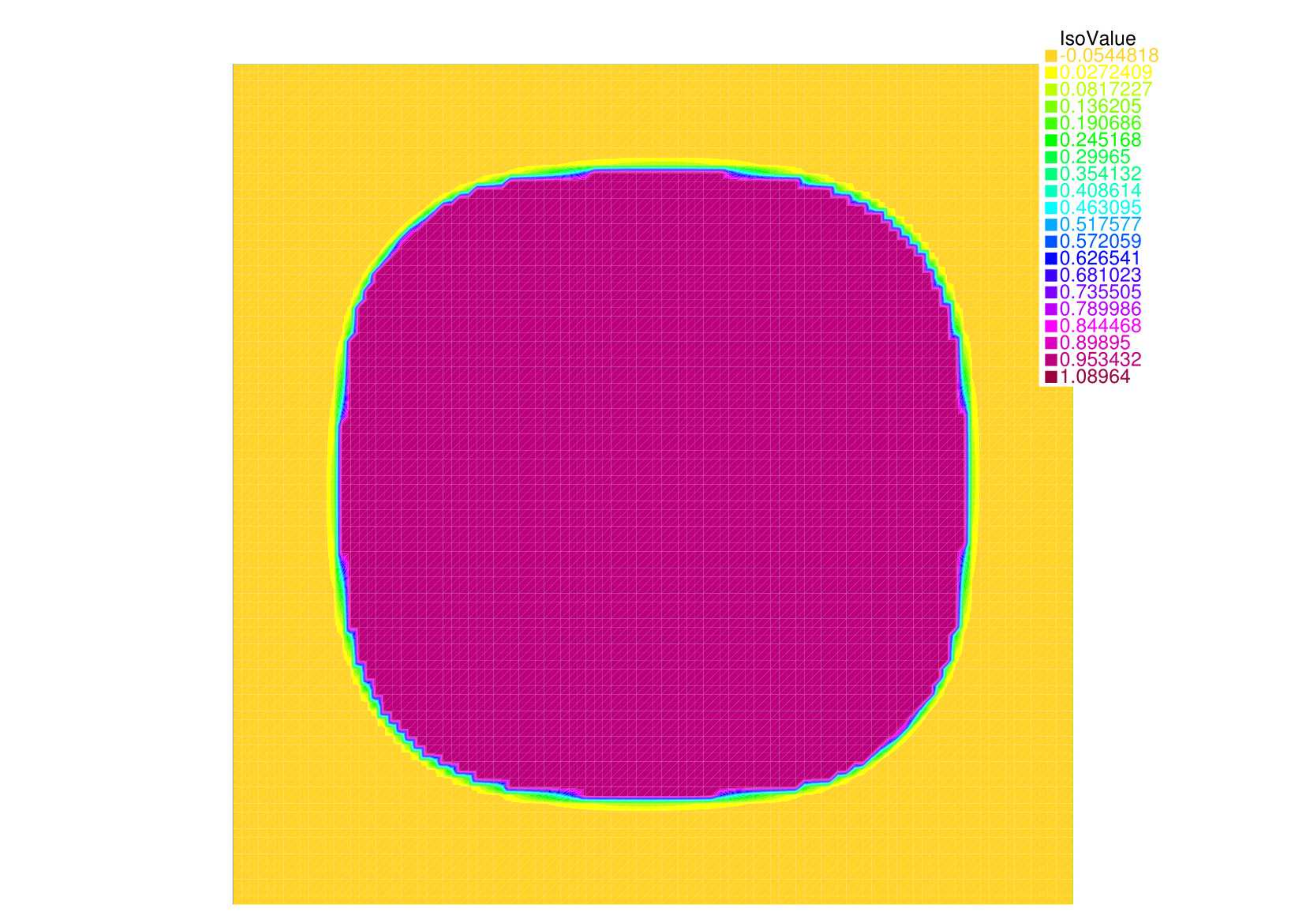}
\includegraphics[scale=0.12]{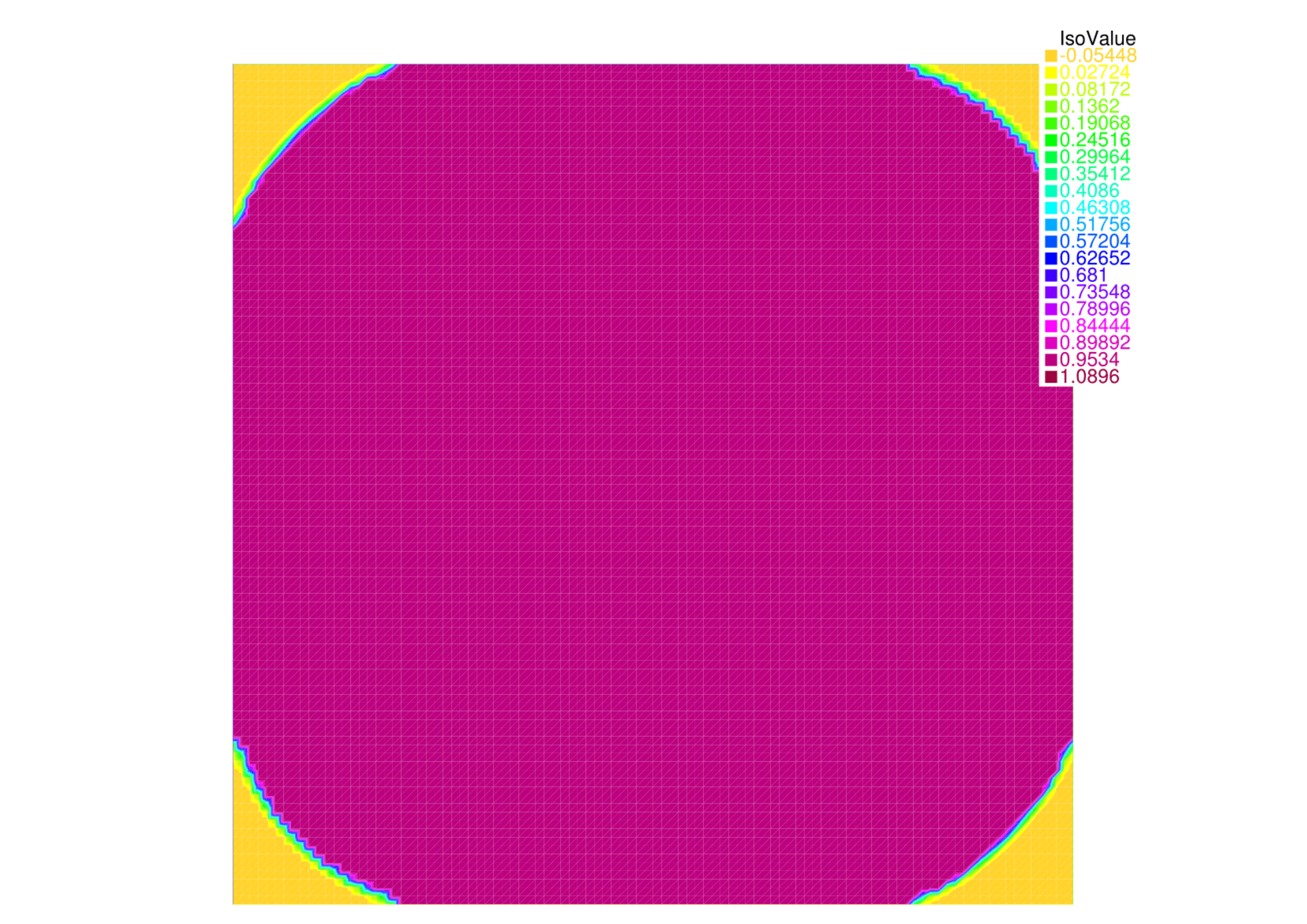}
\includegraphics[scale=0.12]{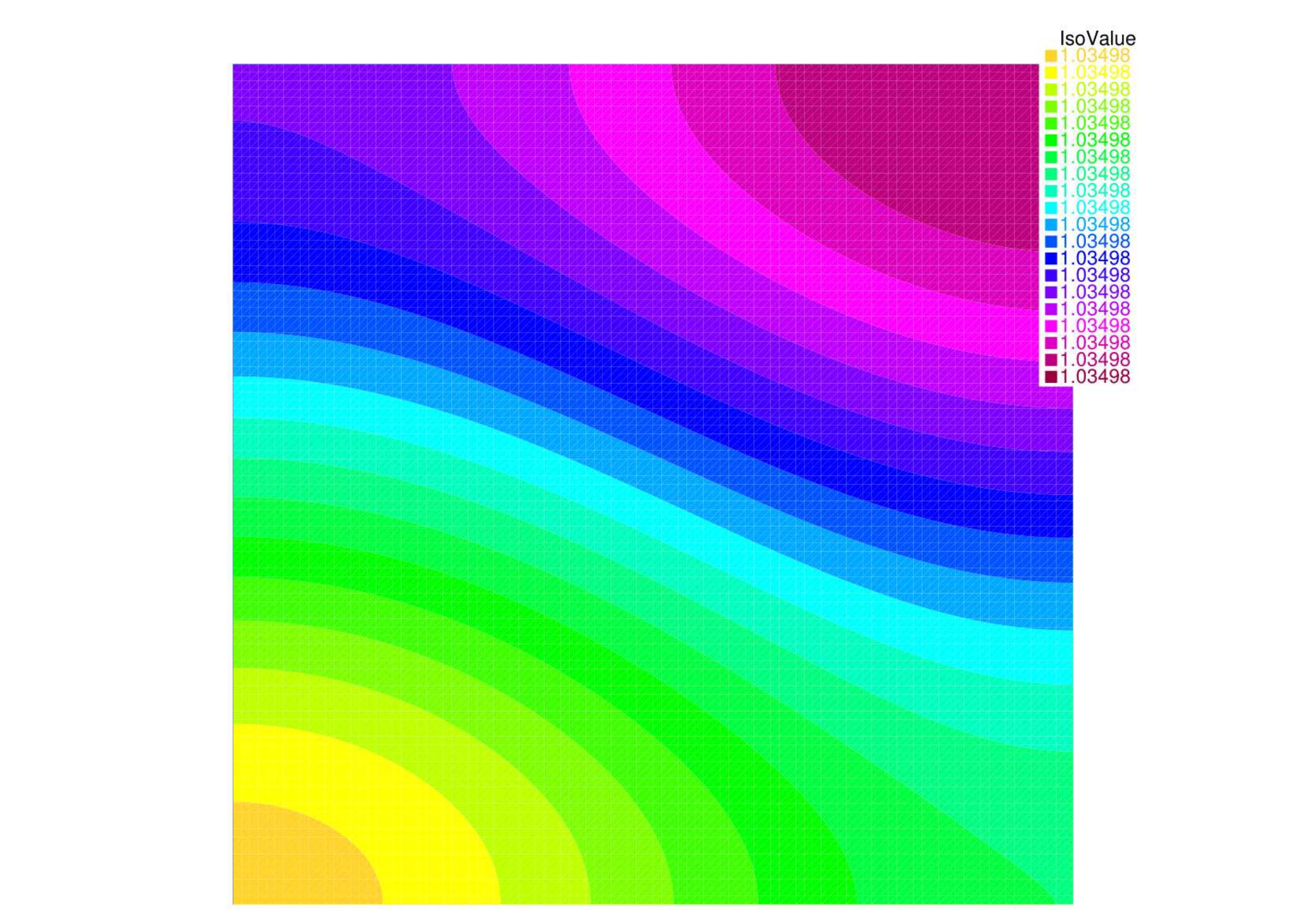}
\caption{Comparison of the density at times $t = 0.1, 0.2, 0.3, 0.4$ for different $P_{\rm max}=10$ (top) and $30$ (bottom).} 
\label{fig:P}
\end{figure}

\end{document}